\newcommand{\arxiv}[1]{{\tt
    \href{http://www.arXiv.org/abs/#1}{arXiv:#1}}}
\theoremstyle{plain}
\newtheorem{thm}{Theorem}[section]
\newtheorem{prop}[thm]{Proposition}
\newtheorem{lemma}[thm]{Lemma}
\newtheorem{cor}[thm]{Corollary}
\theoremstyle{definition}
\newtheorem{definition}[thm]{Definition}
\theoremstyle{remark}
\newtheorem{remark}[thm]{Remark}
\newtheorem{example}[thm]{Example}
\newtheorem*{ack}{Acknowledgements}
\newcommand{\End}{\mathrm{End}}
\DeclareFontFamily{U}{rsf}{}
\DeclareFontShape{U}{rsf}{m}{n}{<5> <6> rsfs5 <7> <8> <9> rsfs7 <10-> rsfs10}{}
\DeclareMathAlphabet\Scr{U}{rsf}{m}{n}
\def\R{\mathbb{R}}
\def\dd{\mathrm{d}}
\def\cQ{\mathcal{Q}}
\def\Id{\mathrm{Id}}
\def\fra{\mathfrak{a}}
\def\frq{\mathfrak{q}}
\def\frf{\mathfrak{f}}
\newcommand{\be}{\begin{equation*}}
\newcommand{\ee}{\end{equation*}}
\newcommand{\ben}{\begin{equation}}
\newcommand{\een}{\end{equation}}
\newcommand{\beqa}{\begin{eqnarray*}}
	\newcommand{\eeqa}{\end{eqnarray*}}
\newcommand{\beqan}{\begin{eqnarray}}
\newcommand{\eeqan}{\end{eqnarray}}
\newcommand{\Tr}{\mathrm{Tr}}
\def\cR{{\mathcal R}}
\def\cC{{\mathcal C}}
\def\cB{\Scr B}
\def\cH{\mathcal{H}}
\def\cK
\newcommand{\Sol}{\mathrm{Sol}}
\newcommand{\Conf}{\mathrm{Conf}}
\def\Cl{\mathrm{Cl}}
\def\Spin{\mathrm{Spin}}
\def\Spin{\mathrm{Spin}}
\def\cD{\mathcal{D}}
\def\cA{\mathcal{A}}
\def\cE{\mathcal{E}}
\def\cP{\mathcal{P}}
\def\cP{\mathcal{P}}
\def\cF{\mathcal{F}}
\def\cC{\mathcal{C}}
\def\G_2{\mathrm{G_2}}
\def\cS{\mathcal{S}}
\def\cX{\mathcal{X}}
\def\G{\mathrm{G}}
\def\R{\mathbb{R}}
\def\cL{\mathcal{L}}
\def\cH{\mathcal{H}}
\def\dd{\mathrm{d}}
\def\frc{\mathfrak{F}}
\def\frc{\mathfrak{c}}
\def\Ric{\mathrm{Ric}}
\setlist[itemize]{leftmargin=*}
\newcolumntype{P}[1]{>{\centering\arraybackslash}p{#1}}
\begin{document}%Draft:\,\today

\title[Differential spinors and Kundt three-manifolds with skew-torsion]{Differential spinors and Kundt three-manifolds with skew-torsion}

\author[C. S. Shahbazi]{C. S. Shahbazi} \address{Departamento de Matem\'aticas, Universidad UNED - Madrid, Reino de Espa\~na}
\email{cshahbazi@mat.uned.es} 
\address{Fakult\"at f\"ur Mathematik, Universit\"at Hamburg, Bundesrepublik Deutschland.}
\email{carlos.shahbazi@uni-hamburg.de}

\begin{abstract}
We develop the theory of spinorial polyforms associated with bundles of irreducible Clifford modules of non-simple real type, obtaining a precise characterization of the square of an irreducible real spinor in signature $(p-q)\equiv_8 1$ as a polyform belonging to a semi-algebraic real set. We use this formalism to study differential spinors on Lorentzian three-manifolds, proving that in this dimension and signature, every differential spinor is equivalent to an isotropic line preserved in a given direction by a metric connection with prescribed torsion. We apply this result to investigate Lorentzian three-manifolds equipped with a skew-torsion parallel spinor, namely a spinor parallel with respect to a metric connection with totally skew-symmetric torsion. We obtain several structural results about this class of Lorentzian three-manifolds, which are necessarily Kundt, and in the compact case, we obtain an explicit differential condition that guarantees geodesic completeness. We further elaborate on these results to study the supersymmetric solutions of three-dimensional NS-NS supergravity, which involve skew-torsion parallel spinors whose torsion is given by the curvature of a curving on an abelian bundle gerbe. In particular, we obtain a correspondence between NS-NS supersymmetric solutions and null coframes satisfying an explicit exterior differential system that we solve locally. 
\end{abstract}
 
\maketitle

\setcounter{tocdepth}{1} %doesn't display subsections in TOC 
\tableofcontents

% % % % % % % % % % % % % % % % % % % % % % % % % % % % % % % % % % % % % % 
% % % % % % % % % % % % % % % % % % % % % % % % % % % % % % % % % % % % % % 

\section{Introduction}

% % % % % % % % % % % % % % % % % % % % % % % % % % % % % % % % % % % % % % 
% % % % % % % % % % % % % % % % % % % % % % % % % % % % % % % % % % % % % % 

% % % % % % % % % % % % % % % % % % % % % % % % % % % % % % % % % % % % % % 

\subsection*{Background and context}

% % % % % % % % % % % % % % % % % % % % % % % % % % % % % % % % % % % % % %

The main purpose of this article is to introduce and explore the notion of differential spinor, which we consider here in signature $(p-q)\equiv_8 1$, as a unifying notion naturally containing as particular cases the different types of special parallel spinors that have been considered in the literature, as well as some that are yet to appear or be discovered. This includes parallel spinors, Killing spinors, Codazzi spinors, Cauchy spinors, skew-torsion parallel spinors, skew-Killing spinors or generalized Killing spinors, just to name a few, see for instance \cite{BarGM,BaumIII,Conti,FlamencourtMoroianu,Galaev,Kath,Ikemakhen,IkemakhenII,Leitner,MoroianuSemm,MoroianuSemmI,MoroianuSemmII} as well as their references and citations. 

Let $(M,g)$ be an oriented pseudo-Riemannian manifold and let $S$ a bundle of Clifford modules over the bundle of Clifford algebras of $(M,g)$. Given a connection $\cD$ on $S$ a differential spinor is a section $\varepsilon\in \Gamma(S)$ such that $\cD\varepsilon =0$. In this generality, the notion of differential spinor is in principle too general to yield explicit results, mainly because the connection $\cD$ is abstractly defined and is not tied \emph{a priori} to the geometry of $(M,g)$. In signatures $(p-q)\equiv_8 0, 1, 2$ however, the results of \cite{Lazaroiu:2016vov,LS2018} imply that a bundle of irreducible Clifford modules exists if and only if $(M,g)$ is spin, in which case there exists a spin structure $Q_g$ on $(M,g)$ such that $S$ is associated to $Q_g$ via the standard \emph{spinorial} representation of the spin group. Hence, in this case the Levi-Civita connection $\nabla^g$ lifts to $S$ and equation $\cD\varepsilon = 0$ reduces to:
\begin{equation}
\label{eq:differentialspinor}
\nabla^g\varepsilon = \cA(\varepsilon)
\end{equation} 

\noindent
where $\cA\in \Omega^1(\End(S))$ is a uniquely determined one-form on $M$ with values in the endomorphism bundle $\End(S)$. Equation \eqref{eq:differentialspinor} defines the notion of differential spinor that we will consider in this note and that represents the most general \emph{parallelicity condition} that one can impose on an irreducible real spinor in signatures $(p-q)\equiv_8 0, 1, 2$. To the best of our knowledge it does not seem to have been studied in this generality in the literature, especially not in Lorentzian signature. 

Our main reason for introducing the notion of differential spinor is to develop a general spinorial geometric framework that can be applied to the study of the differential conditions satisfied by supersymmetric solutions and configurations in supergravity and string theory, see \cite{Gran:2018ijr,Ortin,Tomasiello} and their references and citations for more details. These differential conditions, usually called \emph{Killing spinor equations} in the literature \cite{Gran:2018ijr,Ortin,Tomasiello}, involve remarkably rich mathematical constructions that in many cases go beyond the type of spinors considered in the mathematical literature and that require specific new methods for their investigation. The method that we develop here, and that originated in the early supergravity literature on supersymmetric solutions \cite{Gauntlett:2002nw,Gibbons:1982fy,Tod:1983pm,Tod:1995jf}, is the theory of spinorial polyforms, which is based on the study of spinorial differential equations via an associated differential-algebraic system for a spinorial polyform, namely for the algebraic square of a spinor \cite{Cortes:2019xmk}. Despite the fact that supersymmetric solutions are of the utmost importance in the theoretical physics community, which has devoted thousands of papers to their study, their mathematical theory, as well as the mathematical theory of supergravity, is yet to be established as a mathematical discipline, see \cite{Cortes:2018lan,Figueroa-OFarrill:2015rfh,Figueroa-OFarrill:2017tcy,deMedeiros:2018ooy,Lazaroiu:2016iav,Lazaroiu:2016nbq,Lazaroiu:2016spz,Lazaroiu:2017qyr,LazaroiuShahbaziAGT,Lazaroiu:2021vmb} for recent progress in this direction. In a Lorentzian set-up, most of the problems studied traditionally in the mathematics literature \cite{MullerSanchez}, such as geodesic completeness, geodesic connectedness, global hyperbolicity, well-posedness, or initial data, just to name a few, remain completely open for supergravity solutions. With this motivation in mind, and as explained in more detail below, we apply the general theory of spinorial polyforms to the systematic study of three-dimensional supersymmetric solutions in NS-NS supergravity \cite{Ortin}, both because of their intrinsic interest as Lorentzian three-manifolds satisfying special curvature conditions, as well as a laboratory to understand the more complicated geometric and topological structure of these solutions in higher dimensions. Supersymmetric solutions of NS-NS supergravity involve irreducible spinors parallel with respect to a metric connection with totally skew torsion given by the curvature of a curving on a bundle gerbe. Although the study of Riemannian metric connections with totally skew torsion is classical in the mathematics literature \cite{Bismut,CristinaCastro,CleytonMoroianuSemmelmann}, also in connection with the study of supergravity Killing spinor equations in Riemannian signature \cite{Friedrich:2001nh,Garcia-Fernandez,Picard:2024tqg}, we are not aware of any systematic study in Lorentzian signature, with some exceptions \cite{Galaev,Murcia:2019cck}. We hope this note can contribute to their systematic mathematical study. 

% % % % % % % % % % % % % % % % % % % % % % % % % % % % % % % % % % % % % % 

\subsection*{Main results and outline}

% % % % % % % % % % % % % % % % % % % % % % % % % % % % % % % % % % % % % %

We summarize in the following bullet points the main results of the article and the contents of each of its sections. 

\begin{itemize}
	\item In Section \ref{sec:SpinorsAsPolyforms} we study the algebraic square of a real irreducible Clifford module in signature $(p-q) \equiv_8 1$ and in Theorem \ref{thm:reconstruction} we characterize it as a solution of a semi-algebraic system of equations in a truncated model of the Kähler-Atiyah algebra underlying the given quadratic vector space. This characterization lies at the heart of the spinorial polyform formalism, since it is the first step towards achieving a complete description of spinors in terms of polyforms. The main novelty of this section with respect to \cite{Cortes:2019xmk}, where similar results are obtained in signature $(p-q) \equiv_8 0,2$, is that in the present case an irreducible Clifford module does not define an isomorphism of unital and associative real algebras. This leads to the use of the aforementioned \emph{truncated model} as already proposed in \cite{LazaroiuB,LazaroiuB}.
	
	\item In Section \ref{sec:spinorialdifferential} we introduce the notion of differential spinor and we develop the theory of spinorial polyforms associated to differential spinors in signature $(p-q) \equiv_8 1$, obtaining a complete equivalence in Theorem \ref{thm:GCKS}, which can now be applied to the study of every differential spinor in these signatures. In particular, we apply it to the three-dimensional Lorentzian case, proving in Theorem \ref{thm:differentialspinors3d} that every differential spinor in this dimension and signature is equivalent to a trivialized isotropic line preserved by a metric connection with torsion given explicitly in terms of $\cA$ and relative to a direction also explicitly given in terms of the latter.
	
	\item In Section \ref{sec:skewtorsion} we apply the previous formalism to the study of a very specific type of differential spinor in three Lorentzian dimensions, namely spinors parallel with respect to a connection with totally skew-symmetric torsion, to which we refer as \emph{skew-torsion parallel spinors}. We obtain a geometric characterization of this type of spinors via a class of adapted parallelisms that we call \emph{null coframes}, showing that they are necessarily Kundt \cite{Boucetta:2022vny,Kundt}, and we apply it to completely solve the problem locally. Equivalently, a Lorentzian three-manifold admits a skew-torsion spinor if and only if it admits an isotropic one-form $u\in \Omega^{1}(M)$ parallel under a metric connection with totally skew-torsion, possibly non-parallel. Although the case of parallel torsion has been recently studied in \cite{ErnstI,ErnstII}, the non-parallel case seems to no have been considered before in the literature. Several natural open problems arise in this context, for instance the geodesic completeness and classification problems of compact Lorentzian three-manifolds equipped with a skew-torsion parallel spinor, or equivalently, an isotropic vector parallel with respect to a connection with skew-symmetric torsion. The methods and results of \cite{HanounahMehidi} seem to be particularly promising to study this case. In this direction, we prove we obtain a simple condition that guarantees that the Levi-Civita connection of $(M,g)$ defines an affine structure on the leaves of the foliation canonically determined by the kernel of $u\in \Omega^1(M)$, encountering thus one of the cases considered in \cite{HanounahMehidi}. Furthermore, in the compact we case we obtain in Proposition \ref{prop:completeness} an explicit differential condition which, if it does not admit a solution, guarantees the geodesic completeness of the underlying Lorentzian three-manifold. 
	
	\item In Section \ref{sec:susygerbes} we consider the particular type of skew-torsion spinors that occurs in the study of three-dimensional NS-NS supersymmetric solutions. These are Lorentzian three-manifolds that satisfy a natural curvature condition, namely the Einstein equation of NS-NS supergravity, and are in addition equipped with a skew-torsion spinor relative to a metric connection with torsion given by the curvature of a bundle gerbe. This section exemplifies the main motivation behind our interest on differential spinors and the theory of spinorial polyforms: the study of supersymmetric solutions in supergravity, in this case NS-NS supergravity. Whereas NS-NS supergravity is formulated on a ten-dimensional manifold, we consider it here on a three-dimensional manifold as a first step towards the mathematical understanding of the differential geometry, topology and moduli of supersymmetric solutions in this ten-dimensional supergravity theory. Our main result in this section is Theorem \ref{thm:susysolutions}, which characterizes globally every three-dimensional supersymmetric solution in terms of an explicit differential system that involves a cohomological condition and for which the dilaton decouples. We apply this theorem to easily obtain the most general NS-NS supersymmetric solution in local adapted coordinates. The study of the Cauchy problem of these supersymmetric solutions, which is work in progress and will be consider elsewhere, provides a natural framework to generalize the results of \cite{Ammann,Glockle:2023ozz,Murcia:2020zig,Murcia:2021dur,RKS} on initial data sets for spinors on globally hyperbolic Lorentzian manifolds. 
\end{itemize}

\begin{ack} 
CSS would like to thank Vicente Cortés, Malek Hanounah, Lilia Mehidi, Alessandro Tomasiello and Abdelghani Zeghib for very useful discussions and comments, and Calin Lazaroiu for many discussions and debates on spin geometry over the years. The work of CSS was partially supported by the research Excellency María Zambrano grant and the Leonardo grant LEO22-2-2155 of the BBVA Foundation, as well as the Quantum Universe research grant of the University of Hamburg.
\end{ack}

% % % % % % % % % % % % % % % % % % % % % % % % % % % % % % % % % % % % % % 
% % % % % % % % % % % % % % % % % % % % % % % % % % % % % % % % % % % % % % 

\section{Algebraic spinorial polyforms in signature $(p-q)\equiv_8 1$}
\label{sec:SpinorsAsPolyforms}

% % % % % % % % % % % % % % % % % % % % % % % % % % % % % % % % % % % % % % 
% % % % % % % % % % % % % % % % % % % % % % % % % % % % % % % % % % % % % % 

Let $V$ be an oriented $d$-dimensional real vector space equipped with a non-degenerate metric $h$ of signature $(p-q)\equiv_8 1$, whence $V$ is of odd dimension $d=p+q$, and let $(V^{\ast},h^{\ast})$ be the quadratic space dual to $(V,h)$. Let $\Cl(V^{\ast},h^{\ast})$ be the universal real Clifford algebra associated to $(V^{\ast},h^{\ast})$, which in our conventions is defined via the relation $v^2 = h^{\ast}(v,v)$, $v\in V^{\ast}$. In the following we will denote by $\pi$ the standard automorphism of $\Cl(V^{\ast},h^{\ast})$, which acts as minus the identity on $V^{\ast}\subset \Cl(V^{\ast},h^{\ast})$, and we denote by $\tau$ the standard anti-automorphism of $\Cl(V^{\ast},h^{\ast})$, which acts as the identity on $V^{\ast}\subset \Cl(V^{\ast},h^{\ast})$. In signature $(p-q)\equiv_8 1$ the pseudo-Riemannian volume form $\nu_h$ on $(V,h)$ squares to the identity and belongs to the center of $\Cl(V^{\ast},h^{\ast})$. Therefore, $\Cl(V^{\ast},h^{\ast})$ is non-simple and splits as a direct sum of unital and associative algebras:
\begin{equation}
\label{eq:splittingCl}
\Cl(V^{\ast},h^{\ast}) = \Cl_{+}(V^{\ast},h^{\ast})  \oplus \Cl_{-}(V^{\ast},h^{\ast})\, , 
\end{equation}

\noindent
where:
\begin{equation*}
\Cl_{l}(V^{\ast},h^{\ast}) = \left\{ x\in \Cl(V^{\ast},h^{\ast}) \,\, \vert\,\, \nu_{h} x = l x\right\}\, , \quad l\in \mathbb{Z}_2\, .
\end{equation*}

\noindent
Equivalently:
\begin{equation*}
\Cl_{l}(V^{\ast},h^{\ast}) = \frac{1}{2} (1 + l \nu_h) (\Cl(V^{\ast},h^{\ast})) \, , \quad l\in \mathbb{Z}_2\, .
\end{equation*}

\noindent
The algebra $\Cl_{l}(V^{\ast},h^{\ast})$, $l\in \mathbb{Z}_2$, is simple and isomorphic to the algebra of $2^{\frac{d-1}{2}}$ by $2^{\frac{d-1}{2}}$ square matrices with real entries. Therefore, $\Cl(V^{\ast},h^{\ast})$ admits two irreducible left Clifford modules of real dimension $2^{\frac{d-1}{2}}$:
\begin{equation*}
\gamma_l\colon \Cl(V^{\ast},h^{\ast})\to \End(\Sigma)\, , \qquad l\in\mathbb{Z}_2\, ,
\end{equation*}

\noindent
that correspond to the projection of $\Cl(V^{\ast},h^{\ast})$ to the factor $\Cl_l(V^{\ast},h^{\ast})$ composed with an isomorphism of the latter to the algebra of endomorphisms $\End(\Sigma)$ of a real vector space $\Sigma$ of dimension $2^{\frac{d-1}{2}}$. These two Clifford modules are distinguished by the value they take at the volume form $\nu_h\in \Cl(V^{\ast},h^{\ast})$, namely:
\begin{equation*}
\gamma_l(\nu_h) = l\,\Id \in \End(\Sigma)\, .
\end{equation*}

\noindent
In particular, the kernel of $\gamma_l$ is given by:
\begin{equation*}
\mathrm{Ker}(\gamma_{+}) =  \Cl_{-}(V^{\ast},h^{\ast})\, , \qquad \mathrm{Ker}(\gamma_{-}) = \Cl_{+}(V^{\ast},h^{\ast}) 
\end{equation*}

\noindent
We will equip $\Sigma$ with an \emph{admissible pairing} \cite{AC,ACDP}, that is, an either symmetric or skew-symmetric non-degenerate real bilinear pairing $\cB$ on $\Sigma$ naturally compatible with $\gamma_l$, in sense that $\cB$ satisfies either:
\begin{equation}
\label{eq:admissiblepairings}
\cB(\gamma_l(x)(\xi_1),\xi_2) = \cB(\xi_1, \gamma_l(\tau(x))(\xi_2)) \quad \mathrm{or}\quad
\cB(\gamma_l(x)(\xi_1),\xi_2) = \cB(\xi_1, \gamma_l(\pi(\tau(x))(\xi_2))
\end{equation}
 
\noindent
for all $x\in \Cl(V^{\ast},h^{\ast})$ and $\xi_1, \xi_2 \in \Sigma$. In the first case we say that $\cB$ is of \emph{positive adjoint type} $\sigma = +1$ whereas in the second case we say that $\cB$ is of \emph{negative adjoint type} $\sigma = -1$. The symmetry type of $\cB$ as a bilinear form will be denoted by $s\in\mathbb{Z}_2$.

\begin{prop} 
\label{prop:admissiblepairings}
Every irreducible real Clifford module $\Sigma$ in signature $(p-q)\equiv_8 1$ admits an admissible pairing $\cB$ with symmetry and adjoint type given as follows in terms of the modulo $4$ reduction of $k := \frac{d-1}{2}$:
\begin{center}
\begin{tabular}{ | l | p{3cm} | p{3cm} | p{3cm} | p{3cm} |}
\hline
$k\,mod$ 4 & 0 & 1 & 2 & 3 \\ \hline
Symmetry type & Symmetric & Skew-symmetric & Skew-symmetric & Symmetric   \\ \hline
Adjoint type & Positive & Negative & Positive & Negative  \\ \hline
\hline
\end{tabular}
\end{center}

\

\noindent
In addition, if $\cB$ is symmetric, then it is of split signature unless $pq=0$, in which case $\cB$ is definite. 
\end{prop}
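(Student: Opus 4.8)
The plan is to reduce the statement to the classical classification of admissible bilinear forms on irreducible Clifford modules and then extract the signature information from the explicit form of the module in signature $(p-q)\equiv_8 1$. First I would recall that since $k=\frac{d-1}{2}$ and $\Cl_l(V^\ast,h^\ast)\cong \Mat_{2^k}(\R)$, the irreducible module $\Sigma\cong \R^{2^k}$ carries the standard matrix representation, and admissible pairings correspond to bilinear forms $\cB$ whose Gram matrix $B$ intertwines $\gamma_l(x)$ with $\gamma_l(\tau(x))$ or $\gamma_l(\pi(\tau(x)))$. The existence of $\cB$ together with its symmetry type $s$ and adjoint type $\sigma$ as a function of $k \bmod 4$ is exactly the content of the admissible-form classification of \cite{AC,ACDP}, so I would invoke this directly to establish the table, checking only that the periodicity pattern in $k$ matches the stated entries for the four residues.

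The substantive new claim is the \emph{signature} assertion for the symmetric cases $k\equiv_4 0,3$. Here I would argue as follows. When $\cB$ is symmetric, the Clifford generators $\gamma_l(e^a)$ act as $\cB$-selfadjoint or $\cB$-antiselfadjoint endomorphisms according to the adjoint type and the sign $h^\ast(e^a,e^a)=\pm 1$; concretely, for an orthonormal coframe the selfadjoint generators square to $+\Id$ and generate $\cB$-symmetric reflections, while the antiselfadjoint ones square to $-\Id$. I would diagonalize $\cB$ on the common eigenspaces built from products of commuting selfadjoint generators: each maximal spacelike/timelike split of $V$ produces, through the representation, an explicit splitting of $\Sigma$ into $\pm$ eigenspaces of $\cB$ of equal dimension, yielding split signature $(2^{k-1},2^{k-1})$. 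The balancing is forced by the existence of a single generator $\gamma_l(e^a)$ that anticommutes with $\cB$ (i.e. is $\cB$-antiselfadjoint and invertible), since conjugation by such a generator maps the positive cone isometrically onto the negative cone, so the two eigenspaces have equal dimension. Thus $\cB$ is of split signature.

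The exceptional definite case $pq=0$ requires isolating why the balancing generator disappears. When $pq=0$ the metric $h^\ast$ is definite, so \emph{all} generators $\gamma_l(e^a)$ have the same square and the same $\cB$-adjointness; there is then no invertible anticommuting generator to exchange the positive and negative cones, and a direct computation shows $\cB$ is (positive or negative) definite. Concretely, in the Euclidean case the representation can be chosen so that the $\gamma_l(e^a)$ are simultaneously $\cB$-antiselfadjoint and the pairing is built from the standard positive inner product on $\R^{2^k}$, forcing definiteness. I would verify this by exhibiting the explicit Clifford matrices (e.g. via the standard tensor-product construction of the representation) and reading off that $\cB$ is proportional to the Euclidean form.

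The main obstacle I anticipate is the signature-balancing argument in the split case: producing, in a representation-independent way, the anticommuting invertible generator that exchanges the $\pm$ eigenspaces, and checking that the stated adjoint type in the table is exactly the one that makes at least one spacelike and one timelike generator $\cB$-antiselfadjoint when $pq\neq 0$. This is where the hypothesis $pq=0$ versus $pq\neq 0$ enters decisively, and it is the only place where the signature $(p,q)$ of $h$ — as opposed to merely the residue $(p-q)\bmod 8$ — actually influences the answer. I would handle it by tracking the $\cB$-adjointness sign of each generator through the construction, since the definiteness/indefiniteness of $\cB$ is governed precisely by whether the generators of positive and negative norm have different $\cB$-adjoint behaviour.
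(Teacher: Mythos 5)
Your strategy is sound and the split-signature argument is correct, but the route is genuinely different from the paper's. The paper does not invoke the classification of \cite{AC,ACDP}: it \emph{constructs} $\cB$ by averaging over the finite group generated by $\pm\gamma_l(e^i)$ to obtain an invariant positive-definite product $(-,-)$ and then setting $\cB(\xi_1,\xi_2)=(\gamma_l(\nu_{+})\xi_1,\xi_2)$, where $\nu_+$ is the volume form of a maximal positive-definite subspace $V^\ast_+$. The table then falls out of a single direct computation with $\tau$ and $\pi$, and --- this is where the construction pays off --- the signature statement is nearly automatic: when $pq=0$ the twisting element $\gamma_l(\nu_+)$ is $\pm\Id$ (either $\nu_+=\nu_h$, which acts as $l\,\Id$, or $\nu_+=1$), so $\cB=\pm(-,-)$ is definite, while when $pq\neq 0$ and $\cB$ is symmetric, $\gamma_l(\nu_+)$ is a traceless $(-,-)$-symmetric involution whose $\pm1$-eigenspaces split $\Sigma$ into equidimensional pieces on which $\cB$ is definite of opposite signs. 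Your balancing argument via a generator acting as a $\cB$-anti-isometry is a clean, representation-independent alternative for the split case, and your verification that such a generator exists precisely when $pq\neq 0$ (given the signature constraint $p-q\equiv_8 1$ and the adjoint type from the table) is exactly the right bookkeeping. What your route loses is the $pq=0$ case: the absence of an anti-isometric generator does not by itself yield definiteness, so you are forced back to an explicit matrix model, whereas the paper's averaged-product construction gives definiteness for free.

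Two small points to tighten. First, the phrase ``makes at least one spacelike and one timelike generator $\cB$-antiselfadjoint'' is off: the $\cB$-adjointness of \emph{all} generators is uniform and fixed by $\sigma$ (since $\tau$ and $\pi\circ\tau$ both act as $\pm\mathrm{id}$ on $V^\ast$); what varies with the causal character of $e^a$ is whether $\gamma_l(e^a)$ is a $\cB$-isometry or a $\cB$-anti-isometry, namely $\cB(\gamma_l(e^a)\xi,\gamma_l(e^a)\eta)=\sigma\, h^\ast(e^a,e^a)\,\cB(\xi,\eta)$, and the anti-isometry you need exists iff $\sigma h^\ast(e^a,e^a)=-1$ for some $a$. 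Second, if you cite \cite{AC,ACDP} for the table you must reconcile conventions (the sign in $v^2=h^\ast(v,v)$ and the definition of adjoint type), which is essentially the same computation the paper performs directly; a self-contained derivation along the paper's lines may be no longer.
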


\begin{proof}
Pick an $h^\ast$-orthonormal basis $\left\{ e^{i}\right\}_{i=1,\ldots, d}$ of $V^\ast$ and define:
\begin{equation*}
\mathrm{K}(\left\{ e^{i}\right\}) := \{1\}\cup \left\{ \pm e^{i_1}\cdot \ldots \cdot e^{i_k}\, | \,\, 1\leq i_1<\ldots <i_k\leq d \, , \, 1\leq k\leq d\right\}\, ,
\end{equation*}

\noindent
to be the finite multiplicative subgroup of $\Cl(V^{\ast},h^{\ast})$ generated by the elements $\pm e^i$. Averaging over
$\mathrm{K}(\left\{ e^{i}\right\})$, we construct an auxiliary symmetric and positive-definite inner product $(-,-)$ on $\Sigma$ which is invariant under the action of $\mathrm{K}(\left\{ e^{i}\right\})$. By construction this product satisfies:
\begin{equation*}
(\gamma_l(x)(\xi_1), \gamma_l(x)(\xi_2)) = (\xi_1, \xi_2)\, , \qquad \forall \,\, x\in \mathrm{K}(\left\{e^{i}\right\})\,\, \forall \,\, \xi_1\, , \xi_2 \in \Sigma\, .
\end{equation*}

\noindent
Write $V^{\ast} = V^{\ast}_{+} \oplus V^{\ast}_{-}$, where $V^{\ast}_{+}$ is a $p$-dimensional subspace of $V^{\ast}$ on which $h^{\ast}$ is positive definite and $V^{\ast}_{-}$ is a $q$-dimensional subspace of $V^{\ast}$ on which $h^{\ast}$ is negative-definite. Fix an orientation on $V^{\ast}_{+}$, which induces a unique orientation on $V^{\ast}_{-}$ compatible with the orientation of $V^{\ast}$ induced from that of $V$, and denote by $\nu_{+}$ and $\nu_{-}$ the corresponding pseudo-Riemannian volume forms. We have $\nu = \nu_{+} \wedge \nu_{-}$.  We define:
\begin{equation*}
\label{eq:Bpmodd}
\cB(\xi_1 , \xi_2) = (\gamma_l(\nu_{+})(\xi_1),\xi_2) \qquad \forall \xi_1, \xi_2 \in \Sigma\, ,
\end{equation*}

\noindent
A direct computation gives:
\begin{equation*}
\cB(\gamma_l(v)\xi_1 , \xi_2) = - (-1)^{\frac{1+d}{2}} \cB(\xi_1 , \gamma_l(v)\xi_2)\, , \quad \cB(\xi_1 , \xi_2) =  (-1)^{\frac{d^2 -1}{8}} \cB(\xi_2 ,\xi_1)
\end{equation*}

\noindent
for every $v\in V^{\ast}$ and $\xi_1, \xi_2 \in \Sigma$. Substituting $d=2k+1$ in the previous equations we obtain the table in the statement. Using $\nu_{-}$ instead of $\nu_{+}$ to define $\cB$ yields the same bilinear pairing modulo a global sign. The fact that $\cB$ is definite if $pq =0$ holds by construction, whereas the fact that $\cB$ is of split signature if $pq \neq 0$ follows from the fact that we have either $\gamma_l(\nu_{+})^2 = 1$ or $\gamma_l(\nu_{+})^2 = -1$.
\end{proof}

\noindent
We will assume that $\Sigma$ is equipped with an admissible pairing of adjoint type $\sigma \in \mathbb{Z}_2$ and symmetry type $s\in \mathbb{Z}_2$. By construction we have:
\begin{equation*}
 \cB(\gamma(x)(\xi_1),\xi_2) + \cB(\xi_1,\gamma(x)(\xi_2)) = 0 \qquad \xi_1 , \xi_2 \in \Sigma
\end{equation*}

\noindent
for all $x = v_1\cdot v_2$ with $v_1, v_2 \in V^{\ast}$ such that $\vert h^{\ast}(v_1,v_1)\vert = \vert h^{\ast}(v_2,v_2)\vert = 1$ and $h^{\ast}(v_1,v_2) = 0$. This implies that $\cB$ is invariant under the action of identity component $\Spin_0(V^{\ast},h^{\ast})\subset \Spin(V^{\ast},h^{\ast})$ of the spin group $\Spin(V^{\ast},h^{\ast})$. For further reference we introduce the following definition. 

\begin{definition}
An \emph{irreducible paired Clifford module} is a triple $(\Sigma,\gamma_l,\cB)$ consisting of an irreducible Clifford module $(\Sigma,\gamma_l)$ equipped with an admissible bilinear pairing $\cB$ of symmetry type $s\in\mathbb{Z}_2$ and adjoint type $\sigma \in \mathbb{Z}_2$.
\end{definition} 
 
\noindent
The class of spinor bundles that we will consider in this article will be modeled on irreducible paired Clifford modules.

% % % % % % % % % % % % % % % % % % % % % % % % % % % % % % % % % % % % % % 

\subsection{The K\"ahler-Atiyah model of $\Cl(V^\ast,h^\ast)$}

% % % % % % % % % % % % % % % % % % % % % % % % % % % % % % % % % % % % % % 

In order to construct the \emph{square} of a spinor as a polyform, we will use the natural identification:
\begin{equation*}
\Psi\colon \Cl(V^{\ast},h^{\ast}) \to (\wedge V^{\ast} , \diamond_h) 
\end{equation*}

\noindent
between the $\Cl(V^{\ast},h^{\ast})$ and the K\"ahler-Atiyah algebra of $(V^{\ast},h^{\ast})$, which we denote by $(\wedge V^{\ast} , \diamond_g)$ \cite{Chevalley1,Chevalley2}. Here $\diamond_h\colon \wedge V^{\ast}\times \wedge V^{\ast} \to \wedge V^{\ast}$ denotes the \emph{geometric product} determined by $h$, which is explicitly given by the linear and associative extension of the following expression:
\begin{equation*}
v \diamond_h \alpha = v\wedge \alpha + \iota_{v^{\sharp_h}} \alpha \qquad \forall\,\, v \in
V^{\ast} \quad \forall\,\, \alpha\in \wedge V^{\ast}\, .
\end{equation*}

\noindent 
We transport the splitting \eqref{eq:splittingCl} of $\Cl(V^{\ast},h^{\ast})$ to $(\wedge V^{\ast},\diamond)$ through the natural identification $\Psi$, obtaining:
\begin{equation*}
(\wedge V^{\ast},\diamond_h) = (\wedge_{+} V^{\ast},\diamond_h) \oplus (\wedge_{-} V^{\ast},\diamond_h) 
\end{equation*}

\noindent
where $\wedge_{l} V^{\ast} = \left\{ \alpha\in \wedge V^{\ast} \,\, \vert\,\,  \nu_h \diamond_h \alpha = l\, \alpha\right\}$. We use the symbol $e_l$ to denote the unit in $\wedge_{l} V^{\ast}$, which is explicitly given by:
\begin{equation*}
e_l = \frac{1}{2} (1 + l \nu_h)\, .
\end{equation*}

\noindent 
Using the identity:
\begin{equation}
\label{eq:nuaction}
\alpha \diamond_h \nu_h  =   \nu_h \diamond_h \alpha   = \ast_h\, \tau(\alpha)\, , \qquad \forall \,\, \alpha \in \wedge V^{\ast}
\end{equation}
	
\noindent
which is useful in computations, we can equivalently write:
\begin{equation*}
\wedge_{l} V^{\ast} = \left\{ \alpha\in \wedge V^{\ast} \,\, \vert\,\,  \ast_h \tau(\alpha) = l\, \alpha\right\}\, .
\end{equation*}

\noindent
By applying the natural identification between $\Cl(V^{\ast},h^{\ast})$ and $\wedge V^{\ast}$ given by $\Psi$, the irreducible representation $\gamma_l$ defines a surjective morphism of unital associative real algebras between $ (\wedge V^{\ast}, \diamond)$ and $\End(\Sigma)$ that we denote by:
\begin{equation*}
\Psi_{\gamma_l} := \gamma_l\circ \Psi^{-1} \colon (\wedge V^{\ast}, \diamond_h)\to \End(\Sigma) \, .
\end{equation*}

\noindent
Let $\cP_l \colon \wedge V^{\ast} \to \wedge_l V^{\ast}$ be the natural projection of $\wedge V^{\ast}$ onto $\wedge_l V^{\ast}$. We have a canonical linear inclusion:
\begin{equation*}
\iota_l \colon \wedge_l V^{\ast} \hookrightarrow \wedge V^{\ast}
\end{equation*}

\noindent
which is a right inverse to $\cP_l$. In particular:
\begin{equation*}
	\Psi_{\gamma_l}\circ \iota_l \colon (\wedge_l V^{\ast}, \diamond_h)\to \End(\Sigma) \, .
\end{equation*}

\noindent
is an isomorphism of unital and associative algebras. Following \cite{LazaroiuB,LazaroiuBII,LazaroiuBC}, we define:
\begin{equation*}
\wedge^{<} V^{\ast} = \bigoplus_{k=0}^{\frac{d-1}{2}} \wedge^k V^{\ast}\subset \wedge V^{\ast}
\end{equation*}

\noindent
Note that $\wedge V^{\ast} = \wedge^{<} V^{\ast} \oplus \ast_h\wedge^{<} V^{\ast}$. Then, restricting the projection $\cP_l \colon \wedge V^{\ast} \to \wedge_l V^{\ast}$ to $\wedge^{<} V^{\ast}$:
\begin{equation*}
\cP_l\vert_{\wedge^{<} V^{\ast}} \colon \wedge^{<} V^{\ast} \to \wedge_l V^{\ast} 
\end{equation*}

\noindent
we obtain an isomorphism of vector spaces that we can use to transport the algebra product in $(\wedge_l V^{\ast},\diamond_h)$ to $\wedge^{<} V^{\ast}$. For every pair $\alpha_1, \alpha_2 \in \wedge^{<} V^{\ast}$ we define:
\begin{equation*}
\alpha_1 \vee_h \alpha_2 = (\cP_l\vert_{\wedge^{<} V^{\ast}})^{-1}(\cP_l\vert_{\wedge^{<} V^{\ast}}(\alpha_1)\diamond_h \cP_l\vert_{\wedge^{<} V^{\ast}}(\alpha_1)) =  (\cP_l\vert_{\wedge^{<} V^{\ast}})^{-1}(\cP_l(\alpha_1 \diamond_h \alpha_1))   = 2 \cP_{<}(\cP_l(\alpha_1 \diamond_h \alpha_1))
\end{equation*}

\noindent
where $\cP_{<}\colon \wedge V^{\ast} \to \wedge^{<} V^{\ast}$ is the natural projection of $\wedge V^{\ast}$ onto $\wedge^{<}V^{\ast}$. By construction, $(\wedge_l V^{\ast},\diamond_h)$ and $(\wedge^{<} V^{\ast},\vee_h)$ are naturally isomorphic as unital and associative real algebras. For further reference we introduce the following linear map:
\begin{equation*}
	\Psi^{<}_{\gamma_l} :=  \Psi_{\gamma_l} \circ \iota_l\circ  \cP_l \vert_{\wedge_l V^{\ast}} \colon (\wedge^{<} V^{\ast}, \vee_h)\to \End(\Sigma)  
\end{equation*}

\noindent
which, by the previous discussion, is an isomorphism of unital and associative algebras. All together, we obtain the following commutative diagram of unital and associative algebras:
\begin{eqnarray*}
	\scalebox{1.0}{
		\xymatrix{
			& \Cl_l(V,h) ~\ar[d]  \\
			\Cl(V,h) ~\ar[ur]^{p_l}\ar[r]^{\gamma_l} \ar[d]_{\Psi}& ~ \End(\Sigma) & \ar[l]_{\Psi^{<}_{\gamma_l}} \ar@<-2pt>[ld]_{\cP_l} (\wedge^{<} V^{\ast},\vee_h)  \\
			(\wedge V^{\ast},\diamond_h)~\ar[r]^{\cP_l} & \ar@<2pt>[l]^{\iota_l} ~  (\wedge_l V^{\ast},\diamond_h) \ar[u] \ar[ur]_{2\cP_{<}}}}
\end{eqnarray*}
 
\

\noindent
In particular, for every $\alpha \in \wedge V^{\ast}$ we have:
\begin{equation}
\label{eq:Cliffordrelation}
\gamma_l \circ \Psi^{-1}(\alpha) = 2\Psi^{<}_{\gamma_l} \circ \cP_{<}\circ \cP_l (\alpha) = \Psi^{<}_{\gamma_l} (\, \alpha^{<} + l \ast_g\tau(\alpha^{>}))
\end{equation}

\noindent
where $\alpha^{<} := \cP_{<}(\alpha)$ and $\alpha^{>} := \alpha - \alpha^{<}$. Note that $2\cP_{<}\colon \wedge_l V^{\ast} \to \wedge^{<} V^{\ast}$ is the inverse of the linear isomorphism $\cP_l\vert_{\wedge^{<} V^{\ast}}\colon \wedge^{<} V^{\ast} \to \wedge_l V^{\ast} $.

\begin{remark}
\label{remark:basis}
Let $(e^1,\hdots ,e^d)$ be an orhonormal basis of $(V^{\ast},h^{\ast})$ and let $\mathrm{I}\in \End(\Sigma)$ be the identity endomorphism of $\Sigma$. The following sets:
\begin{equation*}
\left\{1,\sqcup_{k=1}^{\frac{d-1}{2}}  e^{i_1}\wedge \cdots \wedge e^{i_k} \right\}\, , \quad \left\{e_l,\sqcup_{k=1}^{\frac{d-1}{2}}  \cP_l(e^{i_1}\wedge \cdots \wedge e^{i_k})\right\}\, , \quad \left\{\mathrm{I},\sqcup_{k=1}^{\frac{d-1}{2}}  \Psi^{<}_{\gamma_l}(e^{i_1}) \cdots \Psi^{<}_{\gamma_l} (e^{i_k})\right\}
\end{equation*}

\noindent
where it is assumed that $i_{1}<\cdots < i_k$, are basis of $\wedge^{<} V^{\ast}$, $\wedge_l V^{\ast}$ and $\End(\Sigma)$, respectively. 
\end{remark}

\noindent
Through the isomorphism $\Psi^{<}_{\gamma_l}$ the trace on $\End(\Sigma)$ transfers to the \emph{truncated} K\"ahler-Atiyah algebra $(\wedge^{<} V^{\ast}, \vee_g)$, defining the {\em truncated K\"ahler-Atiyah trace} $\cS_l$, explicitly given by the following linear map:
\begin{equation*}
\cS_l\colon  (\wedge^{<} V^{\ast}, \vee_h) \to \R\, , \qquad  \alpha\mapsto \Tr(\Psi^{<}_{\gamma_l}(\alpha))\, .
\end{equation*}

\noindent
Since $\Psi^{<}_{\gamma_l}$ is a unital morphism of algebras, we have:
\begin{equation*}
\cS_l(1) = 2^{\frac{d-1}{2}}\, ,   \qquad \cS_l(\alpha_1\vee_h \alpha_2) =  \cS_l(\alpha_2\vee_h\alpha_1) \qquad \forall\,\, \alpha_1,\alpha_2\in \wedge^{<} V^{\ast}
\end{equation*}

\noindent
where $1\in \wedge^0 V^{\ast}$ is the identity of $(\wedge^{<} V^{\ast},\vee_h)$.

\begin{prop}
\label{prop:tracev}
We have $\cS_l(\alpha) = 2^{\frac{d-1}{2}} \alpha^{(0)}$ for every $\alpha \in \wedge^{<} V^{\ast}$.
\end{prop}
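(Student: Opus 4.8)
The plan is to reduce to evaluating the truncated trace on basis elements and then invoke the standard vanishing of the trace of a product of distinct gamma generators. Since both $\cS_l$ and the map $\alpha\mapsto \alpha^{(0)}$ are linear, it suffices to verify the identity on the basis of $\wedge^{<} V^{\ast}$ exhibited in Remark \ref{remark:basis}, that is, on $1\in\wedge^0 V^{\ast}$ and on the wedge monomials $e^{i_1}\wedge\cdots\wedge e^{i_k}$ with $1\le k\le \frac{d-1}{2}$ and $i_1<\cdots<i_k$. For $\alpha=1$ the asserted identity reads $\cS_l(1)=2^{\frac{d-1}{2}}$, which is exactly the unitality of $\Psi^{<}_{\gamma_l}$ recorded before the statement, and indeed $1^{(0)}=1$. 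For $\alpha=e^{i_1}\wedge\cdots\wedge e^{i_k}$ with $k\ge 1$ one has $\alpha^{(0)}=0$, so the claim becomes $\cS_l(\alpha)=\Tr(\Psi^{<}_{\gamma_l}(\alpha))=0$.

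Next I would transport this to the Clifford picture. Specializing \eqref{eq:Cliffordrelation} to $\alpha\in\wedge^{<} V^{\ast}$, where $\alpha^{<}=\alpha$ and $\alpha^{>}=0$, gives $\Psi^{<}_{\gamma_l}(\alpha)=\gamma_l(\Psi^{-1}(\alpha))$. Because the basis $(e^i)$ is $h^{\ast}$-orthonormal, the wedge of distinct basis covectors coincides with their geometric product, $e^{i_1}\wedge\cdots\wedge e^{i_k}=e^{i_1}\diamond_h\cdots\diamond_h e^{i_k}$, and $\Psi$ intertwines $\diamond_h$ with Clifford multiplication; hence $\Psi^{-1}(\alpha)=e^{i_1}\cdots e^{i_k}$ in $\Cl(V^{\ast},h^{\ast})$ and $\Psi^{<}_{\gamma_l}(\alpha)=\gamma_l(e^{i_1})\cdots\gamma_l(e^{i_k})$. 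The proposition thus reduces to the statement that the trace of a product of $k\ge 1$ distinct orthonormal gamma generators vanishes.

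Finally I would establish this vanishing by the usual conjugation argument. Each generator $\gamma_l(e^m)$ squares to $h^{\ast}(e^m,e^m)\,\Id=\pm\Id$ and is therefore invertible, while distinct generators anticommute, $\gamma_l(e^a)\gamma_l(e^b)=-\gamma_l(e^b)\gamma_l(e^a)$ for $a\ne b$. Setting $\Gamma=\gamma_l(e^{i_1})\cdots\gamma_l(e^{i_k})$ and conjugating by one generator $\gamma_l(e^m)$, cyclicity of the trace yields $\Tr(\Gamma)=\Tr(\gamma_l(e^m)\,\Gamma\,\gamma_l(e^m)^{-1})$, whereas the anticommutation relations multiply $\Gamma$ by $(-1)^k$ when $m\notin\{i_1,\dots,i_k\}$ and by $(-1)^{k-1}$ when $m\in\{i_1,\dots,i_k\}$. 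If $k$ is even I take $m$ among the $i_s$, and if $k$ is odd I take $m$ outside $\{i_1,\dots,i_k\}$; the latter choice is available precisely because $k\le\frac{d-1}{2}<d$ leaves an unused index. In both cases the resulting sign is $-1$, so $\Tr(\Gamma)=-\Tr(\Gamma)=0$, completing the verification on every basis element and hence the proposition.

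The only genuine content lies in this last paragraph, and within it in the even/odd case split: for odd $k$ one really needs an external index, which exists exactly because the truncation stops at degree $\frac{d-1}{2}$, strictly below $d$ (this is also where the hypothesis on the signature enters, through the non-simplicity that forces the model to be truncated). Everything preceding is linearity together with bookkeeping through the identification $\Psi$.
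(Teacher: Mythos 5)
Your proof is correct and follows essentially the same route as the paper: reduce to basis monomials, then kill the trace of a product of $k\ge 1$ distinct generators by conjugating with a generator inside the product when $k$ is even and with one outside it when $k$ is odd, the latter existing because $k\le\frac{d-1}{2}<d$. The only cosmetic difference is that you carry out the conjugation argument in $\End(\Sigma)$ after transporting through $\Psi^{<}_{\gamma_l}$, whereas the paper performs the identical computation directly in $(\wedge^{<}V^{\ast},\vee_h)$ using the cyclicity of $\cS_l$.
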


\begin{proof}
Let $\left\{ e^i\right\}_{i=1,\ldots,d}$ be an orthonormal basis of $(V^{\ast},h^{\ast})$. For $i\neq j$ we have $e^i\vee_h e^j = - e^j\vee_h e^i$ and hence $(e^i)^{-1}\vee_h e^j\vee_h e^i = - e^j$. Let $1 \leq k \leq (d-1)/2$ and $1\leq i_1 < \cdots < i_k \leq d$. If $k$ is even, then:
\begin{equation*}
\cS_l(e^{i_1}\vee_h \cdots \vee_h e^{i_{k}}) = \cS_l(e^{i_k}\vee_h e^{i_1}\vee_h \cdots \vee_h e^{i_{k-1}}) = - \cS_l(e^{i_1}\vee_h \cdots \vee_h e^{i_{k}})
\end{equation*}

\noindent
and hence $\cS(e^{i_1}\vee_h \cdots \vee_h e^{i_{k}}) = 0$. Here we have used cyclicity of the K\"ahler-Atiyah trace and the fact that $e^{i_{k}}$ anticommutes with $e^{i_1}\vee_h \cdots\vee_h e^{i_{k-1}}$. If $k$ is odd, let $j\in \left\{1,\dots,d\right\}$ be such that $j\not \in \left\{i_1,\dots,i_k\right\}$, which exists since $k<d$. We have:
\begin{eqnarray*}
& \cS_l(e^{i_1}\vee_h \cdots \vee_h e^{i_{k}}) = - \cS((e^j)^{-1} \vee_h e^{i_1}\vee_h \cdots  \vee_h e^{i_{k}}\vee_h e^j) \\
& = - \cS_l(e^{i_1}\vee_h \cdots \vee_h e^{i_{k}}) = 0 
\end{eqnarray*}

\noindent
and hence we conclude.
\end{proof}

\noindent
The \emph{transpose} operation with respect to $\cB$ can also be also be nicely transported to $(\wedge^{<}V^{\ast},\vee_h)$ thanks to the admissability of $\cB$.
\begin{lemma}
\label{lemma:transposePsi}
The following formula holds for every $\alpha\in \wedge^{<} V^{\ast}$:
\begin{equation*}
\Psi_{\gamma_l}^{<}(\alpha)^t = \Psi_{\gamma_l}^{<}((\pi^{\frac{1-\sigma}{2}}\circ \tau)(\alpha))
\end{equation*}

\noindent
where $(-)^t$ denotes the adjoint operation with respect to $\cB$.
\end{lemma}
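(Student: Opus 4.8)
The plan is to observe that on the truncated space $\wedge^{<}V^{\ast}$ the map $\Psi^{<}_{\gamma_l}$ coincides with $\Psi_{\gamma_l}=\gamma_l\circ\Psi^{-1}$, and then simply to transport the defining admissibility property of $\cB$ through $\Psi$. To begin, I would record the adjoint of a Clifford endomorphism in a single formula: writing $A^t$ for the $\cB$-adjoint of $A\in\End(\Sigma)$, the two cases of \eqref{eq:admissiblepairings} say precisely that
\[
\gamma_l(x)^t=\gamma_l\big((\pi^{\frac{1-\sigma}{2}}\circ\tau)(x)\big),\qquad \forall\, x\in\Cl(V^{\ast},h^{\ast}),
\]
since $\tfrac{1-\sigma}{2}$ equals $0$ for $\sigma=+1$ and $1$ for $\sigma=-1$. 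I set $\Theta:=\pi^{\frac{1-\sigma}{2}}\circ\tau$ for brevity.

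Next I would isolate the key identification $\Psi^{<}_{\gamma_l}(\alpha)=\Psi_{\gamma_l}(\alpha)=\gamma_l(\Psi^{-1}(\alpha))$ valid for every $\alpha\in\wedge^{<}V^{\ast}$. This is exactly \eqref{eq:Cliffordrelation} specialised to $\alpha^{>}=0$; conceptually it holds because $\alpha-\cP_l(\alpha)\in\wedge_{-l}V^{\ast}$, which under $\Psi$ corresponds to $\ker\gamma_l=\Cl_{-l}(V^{\ast},h^{\ast})$, so replacing $\alpha$ by its projection $\cP_l(\alpha)$ leaves $\gamma_l(\Psi^{-1}(\alpha))$ unchanged.

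The computation is then immediate. For $\alpha\in\wedge^{<}V^{\ast}$,
\[
\Psi^{<}_{\gamma_l}(\alpha)^t=\gamma_l(\Psi^{-1}(\alpha))^t=\gamma_l\big(\Theta(\Psi^{-1}(\alpha))\big)=\gamma_l\big(\Psi^{-1}(\Theta(\alpha))\big)=\Psi^{<}_{\gamma_l}(\Theta(\alpha)),
\]
where the second equality is the adjoint formula displayed above, the third uses that $\Psi$ intertwines the Clifford automorphism $\pi$ and anti-automorphism $\tau$ with the grading automorphism and the reversal of $(\wedge V^{\ast},\diamond_h)$, and the last equality reuses the identification of the previous step, legitimate because $\Theta$ preserves the form degree and hence maps $\wedge^{<}V^{\ast}$ into itself. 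This is the claimed identity $\Psi^{<}_{\gamma_l}(\alpha)^t=\Psi^{<}_{\gamma_l}((\pi^{\frac{1-\sigma}{2}}\circ\tau)(\alpha))$.

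I expect the only genuine subtlety to be the identification step: one must be sure that passing between $\Psi_{\gamma_l}$ and $\Psi^{<}_{\gamma_l}$ is harmless on $\wedge^{<}V^{\ast}$, which hinges on $\ker\gamma_l$ corresponding to $\wedge_{-l}V^{\ast}$. Were one instead to work with $\cP_l(\alpha)\in\wedge_l V^{\ast}$ throughout, one would be forced to verify that $\Theta$ commutes with the projection $\cP_l$, i.e. that $\Theta$ preserves each summand $\wedge_{\pm l}V^{\ast}$; using \eqref{eq:nuaction} together with $\pi(\nu_h)=-\nu_h$ and $\tau(\nu_h)=(-1)^k\nu_h$ (here $d=2k+1$), this reduces to the matching $\sigma=+1\Leftrightarrow k$ even and $\sigma=-1\Leftrightarrow k$ odd recorded in Proposition \ref{prop:admissiblepairings}. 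The route above avoids this bookkeeping altogether.
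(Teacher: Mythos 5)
Your proof is correct and takes the same route as the paper, whose entire argument is the one-line remark that the lemma ``follows directly from Equation \eqref{eq:admissiblepairings}''; you have simply filled in the details that this remark leaves implicit (the identification $\Psi^{<}_{\gamma_l}=\gamma_l\circ\Psi^{-1}$ on $\wedge^{<}V^{\ast}$ via $\ker\gamma_l=\Cl_{-l}(V^{\ast},h^{\ast})$, the intertwining of $\pi,\tau$ by $\Psi$, and the degree-preservation of $\pi^{\frac{1-\sigma}{2}}\circ\tau$). Your closing observation that the alternative bookkeeping through $\cP_l$ would require $\sigma=+1\Leftrightarrow k$ even, as recorded in Proposition \ref{prop:admissiblepairings}, is also accurate.
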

 
\begin{proof}
Follows directly from Equation \eqref{eq:admissiblepairings}.
\end{proof}

\noindent
Therefore, by the previous lemma we define the \emph{transpose} $\alpha^t\in \wedge^{<} V^{\ast}$ of an element $\alpha\in \wedge^{<} V^{\ast}$ by:
\begin{equation*}
\alpha^t = (\pi^{\frac{1-\sigma}{2}}\circ \tau)(\alpha)
\end{equation*}

\noindent
Note that with this definition we have $\Psi_{\gamma_l}^{<}(\alpha^t) = \Psi_{\gamma_l}^{<}(\alpha)^t$.

\begin{example}
If $d=1$ then $(\wedge^{<} V^{\ast}, \vee_h)$ is isomorphic to the algebra of real numbers $\mathbb{R}$. If $d=3$ and $(V,h)$ is of mostly plus signature, then:
\begin{equation*}
\wedge^{<} V^{\ast} = \mathbb{R} \oplus V^{\ast}
\end{equation*}

\noindent
and:
\begin{equation*}
(c_1 + \alpha_1) \vee_h (c_2 + \alpha_2)  = 2 (\cP_{<}\circ\cP_l)((c_1 + \alpha_1) \diamond_h (c_2 + \alpha_2)) = c_1 c_2 + c_1 \alpha_2 + c_2  \alpha_1 + h^{\ast}(\alpha_1,\alpha_2) -\ast_h (\alpha_1\wedge\alpha_2)
\end{equation*}

\noindent
where $c_1 + \alpha_1 , c_2 + \alpha_2 \in \mathbb{R} \oplus V^{\ast}$. This algebra is isomorphic to the algebra of square two by two matrices with real entries.
\end{example}

% % % % % % % % % % % % % % % % % % % % % % % % % % % % % % % % % % % % % % 

\subsection{Spinor squaring maps}

% % % % % % % % % % % % % % % % % % % % % % % % % % % % % % % % % % % % % % 

Given an irreducible paired Clifford module $(\Sigma, \gamma_l, \cB)$ and a sign $\mu \in \mathbb{Z}_2$, we introduce the following quadratic map:
\begin{equation*}
\cE^{\mu} \colon \Sigma \to \End(\Sigma)\, , \qquad \varepsilon\mapsto \mu \,\varepsilon\otimes \varepsilon^{\ast}\, ,
\end{equation*}

\noindent
where $\varepsilon^{\ast} := \cB( - ,\varepsilon)\in \Sigma^{\ast}$ is the dual of $\varepsilon$ defined through $\cB$.

\begin{definition}
\label{def:squarespinor}
Let $(\Sigma, \gamma_l, \cB)$ be an irreducible paired Clifford module. The {\em spinor square map} associated to $(\gamma_l, \Sigma, \cB)$ is the quadratic map:
\begin{equation*}
\label{eq:spinorsquaremap}
\cE^{\mu}_{\gamma_l} := (\Psi_{\gamma_l}^{<})^{-1} \circ \cE^{\mu} \colon \Sigma \to \wedge^{<} V^{\ast}\, ,
\end{equation*}

\noindent
Given a spinor $\varepsilon\in \Sigma$, the polyforms $\cE^{+}_{\gamma_l}(\varepsilon)$ and $\cE^{-}_{\gamma_l}(\varepsilon)$ are, respectively, the positive and negative {\em squares} of $\varepsilon$ relative to the admissible pairing $\cB$.  
\end{definition}
 
\noindent
We will say that a polyform $\alpha\in \wedge^{<} V^{\ast}$ is the {\em square} of $\varepsilon\in \Sigma$ if either $\alpha=\cE^{+}_{\gamma_l}(\varepsilon)$ or $\alpha=\cE^{-}_{\gamma_l}(\varepsilon)$. Note that the spinor square map is equivariant with respect to the natural action of $\Spin_0(V^{\ast},h^{\ast})$ on the source $\Sigma$ and target $\wedge^{<} V^{\ast}$, respectively. The following result gives the algebraic characterization of spinors in terms of polyforms that we will use to study differential spinors in signature $(p-q)\equiv_8 1$.

\begin{thm}
\label{thm:reconstruction} 
Let $(\Sigma,\gamma_l,\cB)$ be an irreducible paired Clifford module of symmetry type $s$ and adjoint type $\sigma$. The following statements are equivalent for a polyform $\alpha\in \wedge^{<} V^{\ast}$:
\begin{enumerate} 
\itemsep 0.0em
\item $\alpha$ is a the square of a spinor $\varepsilon \in \Sigma$, namely $\alpha \in \mathrm{Im}(\cE^{+}_{\gamma_l})\cup \mathrm{Im}(\cE^{-}_{\gamma_l})$.  

\item $\alpha$ satisfies the following relations:
\begin{equation}
\label{eq:thmdefequationsequiv}
\alpha\vee_h\alpha =\cS(\alpha) \, \alpha\, , \quad  (\pi^{\frac{1-\sigma}{2}}\circ\tau)(\alpha) = s\,\alpha\, , \quad 
\alpha \vee_h \beta \vee_h \alpha = \cS(\alpha \vee_h \beta)\, \alpha\, ,
\end{equation}

\noindent
for a fixed polyform $\beta \in \wedge^{<} V^{\ast}$ such that $\cS(\alpha\vee_h\beta) \neq 0$.

\item The following relations hold:
\begin{equation}
\label{eq:thmdefequations}
(\pi^{\frac{1-\sigma}{2}}\circ\tau)(\alpha) = s\,\alpha\, , \quad  \alpha\vee_h \beta \vee_h \alpha = \cS(\alpha \vee_h \beta)\, \alpha
\end{equation}
for every polyform $\beta\in \wedge^{<} V^{\ast}$.
\end{enumerate}

\noindent
In particular, the image of $\cE^{\mu}_{\gamma_l}$ in $\wedge_l V^{\ast}$ depends only on $s$, $\sigma$ and $h^{\ast}$. 
\end{thm}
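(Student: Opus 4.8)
The plan is to move every condition into the matrix algebra $\End(\Sigma)$ through the unital algebra isomorphism $\Psi^{<}_{\gamma_l}\colon(\wedge^{<}V^{\ast},\vee_h)\to\End(\Sigma)$. Under it the product $\vee_h$ becomes composition, the truncated trace $\cS$ becomes the ordinary trace $\Tr$ (by its very definition $\cS(\alpha)=\Tr(\Psi^{<}_{\gamma_l}(\alpha))$), and by Lemma \ref{lemma:transposePsi} the map $\pi^{\frac{1-\sigma}{2}}\circ\tau$ becomes the $\cB$-adjoint $(-)^{t}$. Setting $E:=\Psi^{<}_{\gamma_l}(\alpha)$ and $B:=\Psi^{<}_{\gamma_l}(\beta)$, statement (1) reads $E=\mu\,\varepsilon\otimes\varepsilon^{\ast}$ for some $\varepsilon\in\Sigma$ and a sign $\mu$; statement (3) reads $E^{t}=sE$ together with $EBE=\Tr(EB)E$ for every $B$; and statement (2) reads $E^{2}=\Tr(E)E$, $E^{t}=sE$ and $EBE=\Tr(EB)E$ for a single $B$ with $\Tr(EB)\neq0$. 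Thus the theorem is a statement about rank-one endomorphisms of $\Sigma$, and I would prove the cycle $(1)\Rightarrow(3)\Rightarrow(2)\Rightarrow(1)$.

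For $(1)\Rightarrow(3)$ I write $E\xi=\mu\,\cB(\xi,\varepsilon)\varepsilon$ and compute directly: since $\Tr(EB)=\mu\,\cB(B\varepsilon,\varepsilon)$ and $\mu^{2}=1$, one gets $EBE\,\xi=\mu^{2}\cB(B\varepsilon,\varepsilon)\cB(\xi,\varepsilon)\varepsilon=\Tr(EB)\,E\xi$ for every $B$, while the symmetry $\cB(\varepsilon,\eta)=s\,\cB(\eta,\varepsilon)$ turns $\cB(E\xi,\eta)=\mu\,\cB(\xi,\varepsilon)\cB(\varepsilon,\eta)$ into $E^{t}=sE$; transporting back through Lemma \ref{lemma:transposePsi} yields \eqref{eq:thmdefequations}. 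The step $(3)\Rightarrow(2)$ is then purely formal: specializing the universal relation to $B=\Id$ gives $E^{2}=\Tr(E)E$, and an admissible $B$ exists whenever $E\neq0$ because the trace form $(X,Y)\mapsto\Tr(XY)$ on $\End(\Sigma)$ is non-degenerate. (The only blemish is $\alpha=0$, which is the square of $\varepsilon=0$ and satisfies \eqref{eq:thmdefequations} trivially; I would record the equivalence for $\alpha\neq0$ and read (2) as asserting the existence of such a $\beta$.)

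The crux is $(2)\Rightarrow(1)$. Put $c:=\Tr(EB)\neq0$ and form $\tilde Q:=\tfrac1c EB$. Using only $EBE=cE$, I find $\tilde Q^{2}=\tfrac1{c^{2}}(EB)(EB)=\tfrac1{c^{2}}(EBE)B=\tfrac1{c^{2}}(cE)B=\tilde Q$, so $\tilde Q$ is idempotent, with $\Tr(\tilde Q)=\tfrac1c\Tr(EB)=1$; over $\mathbb{R}$ this forces $\tilde Q$ to be a rank-one projector. Since $\tilde Q E=\tfrac1c EBE=E$, we get $\mathrm{Im}(E)\subseteq\mathrm{Im}(\tilde Q)$, so $E$ has rank at most one, hence exactly one as $E\neq0$. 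Crucially this never invokes $\Tr(E)\neq0$, which is the genuinely delicate point since $\cS(\alpha)$ may well vanish; I expect this trace-free rank-one reduction, organized around the idempotent $\tilde Q$, to be the main obstacle. Using non-degeneracy of $\cB$ to write $E=w\otimes\cB(-,v)$ with $w,v\neq0$, the relation $E^{t}=sE$ becomes $v\otimes\cB(-,w)=w\otimes\cB(-,v)$, forcing $w=\lambda v$ for some nonzero $\lambda\in\mathbb{R}$; hence $E=\lambda\,v\otimes\cB(-,v)=\lambda\,v\otimes v^{\ast}$, and taking $\mu:=\sign(\lambda)$, $\varepsilon:=\sqrt{|\lambda|}\,v$ gives $E=\mu\,\varepsilon\otimes\varepsilon^{\ast}=\cE^{\mu}(\varepsilon)$, i.e. $\alpha=\cE^{\mu}_{\gamma_l}(\varepsilon)$. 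This closes the cycle.

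For the final assertion, the equivalence $(1)\Leftrightarrow(3)$ identifies $\mathrm{Im}(\cE^{+}_{\gamma_l})\cup\mathrm{Im}(\cE^{-}_{\gamma_l})$ with the solution set of \eqref{eq:thmdefequations}. These equations refer only to the geometric product $\vee_h$ (fixed by $h^{\ast}$), to $\cS$ — which by Proposition \ref{prop:tracev} is $2^{\frac{d-1}{2}}$ times the degree-zero projection and is in particular independent of $l$ — and to $\pi^{\frac{1-\sigma}{2}}\circ\tau$ together with the sign $s$. Therefore the image depends only on $s$, $\sigma$ and $h^{\ast}$, as claimed.
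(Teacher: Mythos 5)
Your proposal is correct, and at the structural level it follows the same route as the paper: both proofs transport all three conditions through the unital algebra isomorphism $\Psi^{<}_{\gamma_l}$ (using Lemma \ref{lemma:transposePsi} for the adjoint and the definition of $\cS$ for the trace), reducing the theorem to a statement about endomorphisms of $\Sigma$ of the form $\mu\,\varepsilon\otimes\varepsilon^{\ast}$. The genuine difference lies in the converse direction: for the claim that any $E\in\End(\Sigma)$ satisfying $E^{t}=sE$ and $EBE=\Tr(EB)E$ must be of this form, the paper gives no argument and simply cites Proposition 2.21 of \cite{Cortes:2019xmk}, whereas you prove it from scratch via the idempotent $\tilde{Q}=\Tr(EB)^{-1}EB$: the computation $\tilde{Q}^{2}=\tilde{Q}$, $\Tr(\tilde{Q})=1$ (so $\tilde{Q}$ is a rank-one projector) together with $\tilde{Q}E=E$ correctly bounds $\rk(E)\leq 1$ without ever invoking $\Tr(E)\neq 0$ — indeed the genuinely delicate point, since $\cS(\alpha)$ may vanish — and the symmetry $E^{t}=sE$ correctly collapses $E=w\otimes\cB(-,v)$ to $\lambda\,v\otimes\cB(-,v)$, with $\mu=\sign(\lambda)$ and $\varepsilon=\sqrt{|\lambda|}\,v$; I verified the auxiliary identities ($\Tr(EB)=\mu\,\cB(B\varepsilon,\varepsilon)$, $E^{t}=s\,v\otimes\cB(-,w)$, valid also in the skew case $s=-1$) and they check out. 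As a byproduct your argument shows that the first equation in \eqref{eq:thmdefequationsequiv} is actually redundant for $(2)\Rightarrow(1)$, and you flag honestly the degenerate case $\alpha=0$, for which statement (2) is vacuously unsatisfiable while (1) and (3) hold — a point the paper handles only implicitly by taking $\xi\in\Sigma\setminus\{0\}$ in its proof. What each approach buys: the paper's citation keeps the proof short and consistent with its reliance on \cite{Cortes:2019xmk}, where the analogous reconstruction in signatures $(p-q)\equiv_8 0,2$ rests on the same endomorphism lemma; your version makes the theorem self-contained at the cost of a few lines of linear algebra, and your justification of the final assertion — the defining equations in \eqref{eq:thmdefequations} involve only $\vee_h$, the trace $\cS$ (which is $l$-independent by Proposition \ref{prop:tracev}) and $\pi^{\frac{1-\sigma}{2}}\circ\tau$ together with $s$ — is at the same level of detail as the paper, which asserts this independence without further comment.
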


\begin{proof}
Let $\xi\in\Sigma - \left\{ 0 \right\}$ and set $E = \cE^{\mu}(\xi)\in \End(\Sigma)$. Then $E$ satisfies:
\begin{equation}
\label{eq:rankone}
E\circ E = \mathrm{Tr}(E) E\, , \qquad E^t = s E\, .
\end{equation}

\noindent
and more generally:
\begin{equation*}
E\circ T\circ  E = \mathrm{Tr}(E\circ T) E
\end{equation*}

\noindent
for every $T\in \End(\Sigma)$. Applying $\Psi_{\gamma_l}^{<}$ to the previous equations and using that it is a morphism of unital and associative algebras, together with Lemma \ref{lemma:transposePsi}, we obtain that $(1) \Rightarrow (2)$ and $(1) \Rightarrow (3)$.

Conversely, by \cite[Proposition 2.21]{Cortes:2019xmk}, if $E\in\End(\Sigma)$ is any endomorphism satisfying the previous system of equations, then there exists an element $\xi\in \Sigma$, unique modulo a sign, such that $E = \cE^{\mu}(\xi)$. Applying now $(\Psi^{<}_{\gamma_l})^{-1} := \colon \End(\Sigma) \to (\wedge_l V^{\ast}, \diamond_h)$ to the system of equations \eqref{eq:rankone} and using that $\Psi^{<}_{\gamma_l}$ is an isomorphism of unital and associative algebras we conclude.
\end{proof}

\noindent
Note that by Remark \ref{remark:basis} every endomorphism $E\in \End(\Sigma)$ can be expanded as follows:
\begin{equation*}
E = \frac{E_0}{2^{\frac{d-1}{2}}} \, \mathrm{I} + \frac{1}{2^{\frac{d-1}{2}}} \sum_{k=1}^{\frac{d-1}{2}} \sum_{i_1 < \dots < i_k} E_{i_1\hdots i_k} \Psi_{\gamma_l}^{<}(e^{i_1})\cdots \Psi_{\gamma_l}^{<}(e^{i_k})
\end{equation*}

\noindent
where $E_0 , E_{i_1\cdots i_k}\in \mathbb{R}$ and:
\begin{equation*}
E_{i_1\cdots i_k} = \mathrm{Tr}(\Psi_{\gamma_l}^{<}(e^{i_k})^{-1} \cdots \Psi_{\gamma_l}^{<}(e^{i_1})^{-1} E)
\end{equation*}

\noindent
Hence, if we set $E = \cE^{\mu}(\xi)$ for a $\xi\in\Sigma$ unique modulo a sign, we obtain:
\begin{equation*}
\mathrm{Tr}(\Psi_{\gamma_l}^{<}(e^{i_k})^{-1} \cdots \Psi_{\gamma_l}^{<}(e^{i_1})^{-1} E) = \mu\, \cB(\Psi_{\gamma_l}^{<}(e^{i_k})^{-1} \cdots \Psi_{\gamma_l}^{<}(e^{i_1})^{-1}\xi,\xi)
\end{equation*}

\noindent
and from this formula it follows that every polyform in the image of the spinor square map $\cE^{\mu}_{\gamma_l}$ admits the following explicit presentation \cite{LazaroiuB, LazaroiuBII, LazaroiuBC}:
\begin{equation}
\label{eq:bilinears}
\alpha = \frac{\mu}{2^{\frac{d-1}{2}}} \sum_{k=0}^{\frac{d-1}{2}}  \,\sum_{i_1 < \dots < i_k} \cB(\Psi_{\gamma_l}^{<}(e^{i_k})^{-1} \cdots \Psi_{\gamma_l}^{<}(e^{i_1})^{-1}\xi,\xi)\, e^{i_1}\wedge \hdots \wedge e^{i_k}\, ,
\end{equation}

\noindent
where $\xi\in \Sigma$ is determined by $\alpha$ up to sign.
 
% % % % % % % % % % % % % % % % % % % % % % % % % % % % % % % % % % % % % % 

\subsection{Signature $(2,1)$}
\label{sec:Lorentzianexample3d}

% % % % % % % % % % % % % % % % % % % % % % % % % % % % % % % % % % % % % % 

Let $(V,h)$ denote the oriented three-dimensional Minkowski vector space with metric $h$ of signature $(2,1)$. Denote by $(V^{\ast},h^{\ast})$ its dual. The associated irreducible Clifford module $(\gamma_l , \Sigma)$ is two-dimensional and is equipped with an admissible pairing $\cB$ which by Proposition \ref{prop:admissiblepairings} is skew-symmetric and of negative adjoint type. To compute the square of an element $\varepsilon\in \Sigma$ we consider Equation \eqref{eq:thmdefequationsequiv} in Theorem \ref{thm:reconstruction}. A polyform $\alpha\in \wedge^{<} V^{\ast}$ satisfies the second equation in \eqref{eq:thmdefequationsequiv} if and only if $\alpha = u $ for a uniquely determined one-form  $u \in V^{\ast}$. A direct computation shows that $u$ satisfies the first equation in \eqref{eq:thmdefequationsequiv} if and only if $\vert u\vert^2_h = 0$. Consider now a isotropic vector $v\in V^{\ast}$ satisfying $h(u,v) = 1$. Then:
\begin{equation*}
\cS_l(v\vee_h u) = 2 \neq 0
\end{equation*}

\noindent
and hence such $v\in V^{\ast}$ satisfies the criteria to be chosen as $\beta$ for the third equation of \eqref{eq:thmdefequationsequiv}. The latter becomes:
\begin{equation*}
u\vee_h v\vee_h u = 2 u
\end{equation*}
 
\noindent
which is automatically satisfied. Hence, a polyform $\alpha \in \wedge^{<} V^{\ast}$ is the square of a non-zero irreducible spinor in signature $(2,1)$ if and only if $\alpha = u$ for a non-zero null vector. 

% % % % % % % % % % % % % % % % % % % % % % % % % % % % % % % % % % % % % % 
% % % % % % % % % % % % % % % % % % % % % % % % % % % % % % % % % % % % % % 

\section{Differential spinors in signature $(p-q)\equiv_8 1$}
\label{sec:spinorialdifferential}

% % % % % % % % % % % % % % % % % % % % % % % % % % % % % % % % % % % % % % 
% % % % % % % % % % % % % % % % % % % % % % % % % % % % % % % % % % % % % %

To define and study differential spinors on pseudo-Riemannian manifolds of signature $(p-q)\equiv_8 1$, we \emph{globalize} the algebraic theory of spinors and polyforms laid out in Section \ref{sec:SpinorsAsPolyforms} to bundles of real irreducible Clifford modules equipped with an arbitrary connection.

% % % % % % % % % % % % % % % % % % % % % % % % % % % % % % % % % % % % % % 

\subsection{General theory}

% % % % % % % % % % % % % % % % % % % % % % % % % % % % % % % % % % % % % % 

Let $(M,g)$ denote a connected and oriented pseudo-Riemannian manifold of signature $(p,q)$ and odd dimension $d=p+q$, where $p-q\equiv_8 1$. Since $M$ is connected, the pseudo-Euclidean vector bundle $(TM,g)$ is modeled on a fixed quadratic vector space that we denote by $(V,h)$. For any point $m\in M$, we thus have an isomorphism of quadratic spaces $(T_mM,g_m)\simeq (V,h)$. Accordingly, the cotangent bundle $T^\ast M$ endowed with the dual metric $g^\ast$ is modeled on the dual quadratic space $(V^{\ast},h^{\ast})$. We denote by $\Cl(M,g)$ the bundle of real Clifford algebras of the {\em cotangent} bundle $(T^\ast M,g^\ast)$, which is modeled on the real Clifford algebra $\Cl(V^{\ast},h^{\ast})$. We use the notation introduced in Section \ref{sec:SpinorsAsPolyforms} to denote the corresponding objects naturally extended on $M$. 
 
\begin{definition}
A {\em bundle of real Clifford modules} on $(M,g)$ is a pair $(S,\Gamma)$, where $S$ is a real vector bundle on $M$ and
$\Gamma:\Cl(M,g)\rightarrow \End(S)$ is a unital morphism of bundles of unital and associative real algebras.
\end{definition}

\noindent 
Given a bundle of Clifford modules $(S,\Gamma)$, for every point $m\in M$ the unital morphism of associative algebras $\Gamma_m:\Cl(T_m^\ast M, g_m^\ast) \to \End(S_m)$ defines a Clifford module that we denote by $(S_m,\Gamma_m)$. Since $M$ is connected, it follows that there exists a Clifford module $(\Sigma,\gamma)$, unique modulo isomorphism, such that that $(S_m,\Gamma_m)$ is isomorphic to $(\Sigma,\gamma)$ via an \emph{unbased} isomorphism of Clifford modules \cite{Lazaroiu:2016vov}. In this situation we say that $(S,\Gamma)$ is a bundle of real Clifford modules of \emph{type} $(\Sigma,\gamma)$.

\begin{definition}
An \emph{paired spinor bundle} of type $(\Sigma,\gamma)$ is a triple $(S,\Gamma,\cB)$ consisting of a bundle of real Clifford modules $(S,\Gamma)$ of type $(\Sigma,\gamma)$ equipped with an admissible bilinear pairing $\cB$ on $S$ of symmetry type $s$ and adjoint type $\sigma$. An \emph{irreducible paired spinor bundle} is a paired spinor bundle of irreducible type $(\Sigma,\gamma)$. In the latter case, a global section $\epsilon\in \Gamma(S)$ is an \emph{irreducible spinor} on $(M,g)$.
\end{definition}

\noindent 
In the signature $p-q\equiv_8 1$ considered in this note, an irreducible spinor bundle has rank $2^{\frac{d-1}{2}}$, where $d$ is the dimension of $M$. References \cite{Lazaroiu:2016vov,LS2018} prove that $(M,g)$ admits an irreducible real spinor bundle if and only if it admits a {\em real Lipschitz structure} of irreducible type. As mentioned in the introduction, in signature $p - q\equiv_8 1$, a real Lipschitz structure of irreducible type corresponds to a standard spin structure $Q_g$ such that ($S,\Gamma)$ is the standard spinor bundle associated to $Q_g$ via the tautological representation of the spin group \cite{Lazaroiu:2016vov}.  

\begin{remark}
Admissible bilinear pairings on an irreducible spinor bundle may be obstructed. They are however guaranteed to exist if $(M,g)$ is \emph{strongly spin}, that is, if every spin structure on $(M,g)$ admits a reduction to the identity component of the spin group in the given signature.
\end{remark}

\begin{definition}
\label{def:generalizedKS}
Let $(S,\Gamma,\cB)$ be a paired spinor bundle on $(M,g)$ and let $\cD$ be a connection on $S$. A section $\epsilon \in \Gamma(S)$ is a \emph{differential spinor} on $(M,g)$ relative to $\cD$ if:
\begin{equation*}
\cD\varepsilon = 0
\end{equation*}

\noindent
Such $\varepsilon$ is a \emph{constrained differential spinor} relative to $(\cD,\cQ)$ if in addition:
\begin{equation*}
\cQ(\varepsilon) = 0
\end{equation*}

\noindent
where $\cQ\in \Gamma(\End(S))$ is an endomorphism of $S$.
\end{definition}

\begin{remark}
Supersymmetric solutions of supergravity theories are generally characterized as manifolds admitting certain systems of constrained differential spinors, see for instance \cite{LazaroiuB,LazaroiuBII}. This extends the notion of generalized
Killing spinor considered \cite{BarGM,FriedrichKim,FriedrichKimII,MoroianuSemm}.
\end{remark}

\noindent
Let $(S,\Gamma,\cB)$ is an irreducible paired spinor bundle. Since  $(S,\Gamma,\cB)$ is associated to a spin structure, we can write $\cD=\nabla^g-\cA$ for a unique element $\cA\in \Omega^1(\End(S))$, where $\nabla^g$ is the lift of the Levi-Civita connection to $S$. In this case, the equations satisfied by a constrained differential spinor can be equivalently written as:
\begin{equation*}
\nabla^g\epsilon = \cA (\epsilon)\, , \qquad  \cQ(\epsilon) = 0
\end{equation*}

\noindent
Their solutions are called constrained differential spinors {\em relative to $(\cA,\cQ)$}. Note that, using connectedness of $M$ and the parallel transport defined by $\cD$, it follows that the set of constrained differential spinors relative to $(\cA,\cQ)$ is a finite-dimensional vector space. As explained in Section \ref{sec:SpinorsAsPolyforms}, associated to $(S,\Gamma)$ we obtain an isomorphism of bundles of unital and associative algebras:
\begin{equation*}
\Psi_{\Gamma}^{<}  \colon (\wedge^{<} T^{\ast}M, \vee_g)\to \End(S)  
\end{equation*}

\noindent
as well as a \emph{quadratic map}: 
\begin{equation*}
\label{eq:spinorsquarebundlemap}
\cE^{\mu}_{\Gamma}  \colon S \to \wedge^{<} T^{\ast}M 
\end{equation*}

\noindent
which we extend pointwise to sections $\Gamma(S)$ of $S$ and denote by the same symbol for ease of notation. Evaluating $\cA$ on a vector $v\in\mathfrak{X}(M)$ we obtain a field of endomorphisms $\cA_v \in \Gamma(\End(S))$ to which we can apply the inverse of $\Psi_{\Gamma}^{<}$ in order to obtain a section of $\wedge^{<} T^{\ast}M$. We define:
\begin{equation*}
\fra_v := (\Psi_{\Gamma}^{<})^{-1}(\cA_v)\, , \quad \forall \,\, v\in\mathfrak{X}(M)
\end{equation*}

\noindent
and we denote by $\fra\in\Omega^1(\End(S))$ the associated one-form taking values in $\wedge^{<} T^{\ast}M$. Following \cite{Meinrenken}, we refer to $\fra$ as the \emph{dequantization} of $\cA$. Similarly, given any field of endomorphisms $\cQ\in \Gamma(\End(S))$, we define $\frq := (\Psi_{\Gamma}^{<})^{-1}(\cQ)$.

\begin{thm}
\label{thm:GCKS}
A strongly spin Lorentzian manifold $(M,g)$ of signature $(p-q)\equiv_8 1$ admits a constrained differential spinor of type $(\Sigma,\gamma_l)$ relative to a pair $(\cA,\cQ)$ with given dequantization $(\fra,\frq)$ if and only if there exists a nowhere vanishing polyform $\alpha\in \Gamma(\wedge^{<} M)$ such that the following differential system is satisfied: 
\begin{equation}
\label{eq:Differentialpolyformsystem}
\nabla^g\alpha = \fra\vee_g \alpha + \alpha\vee_g (\pi^{\frac{1-\sigma}{2}}\circ\tau)(\fra)\, , \qquad  \frq \vee_h \alpha  = 0 
\end{equation}

\noindent
and either the following algebraic equations are satisfied for every polyform $\beta\in \Gamma(\wedge_l M)$:
\begin{equation}
\label{eq:FierzglobalI}
\alpha \vee_h \beta \vee_h \alpha = \cS(\alpha \vee_h \beta)\,  \alpha\, , \qquad (\pi^{\frac{1-\sigma}{2}}\circ\tau)(\alpha) = s\,\alpha 
\end{equation}

\noindent
or, equivalently, the following equations:
\begin{equation}
\label{eq:FierzglobalII}
\alpha \vee_h \alpha = \cS(\alpha) \, \alpha\, , \qquad (\pi^{\frac{1-\sigma}{2}}\circ\tau)(\alpha) =  s\,\alpha\, , \qquad \alpha \vee_h\beta \vee_h \alpha = \cS(\alpha \vee_h \beta)\, \alpha
\end{equation}

\noindent
are satisfied for some fixed polyform $\beta\in \Gamma(\wedge^{<} M)$ such that $\cS(\alpha\diamond\beta) \neq 0$.
\end{thm}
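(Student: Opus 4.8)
The plan is to globalize Theorem \ref{thm:reconstruction} pointwise and then dequantize the differential equation $\cD\varepsilon=0$ into the polyform equation via the algebra isomorphism $\Psi_\Gamma^<$. The key structural observation is that everything in the algebraic reconstruction theorem is $\Spin_0$-equivariant and fiberwise, so the three purely algebraic conditions \eqref{eq:FierzglobalI} and \eqref{eq:FierzglobalII} are simply the sectionwise assertion that at each point $m\in M$ the polyform $\alpha_m$ lies in $\mathrm{Im}(\cE^+_{\gamma_l})\cup\mathrm{Im}(\cE^-_{\gamma_l})$; by Theorem \ref{thm:reconstruction} this is equivalent to the existence of $\varepsilon_m\in S_m$, unique up to sign, with $\alpha_m=\cE^\mu_{\Gamma}(\varepsilon_m)$. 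The equivalence of \eqref{eq:FierzglobalI} and \eqref{eq:FierzglobalII} is then immediate from the equivalence of statements (2) and (3) in Theorem \ref{thm:reconstruction}, applied fiberwise.

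First I would establish the \emph{algebraic} half: given a nowhere-vanishing $\alpha\in\Gamma(\wedge^< M)$ satisfying \eqref{eq:FierzglobalI}, invoke Theorem \ref{thm:reconstruction} at each point to produce a sign-ambiguous family $\{\pm\varepsilon_m\}$. The subtlety is promoting this pointwise family to a genuine global section $\varepsilon\in\Gamma(S)$: the squaring map $\cE^\mu_\Gamma$ is two-to-one with kernel $\{\pm 1\}$, so a priori one only obtains a section of the projectivization, i.e.\ a real line subbundle. This is where the hypothesis that $(M,g)$ is \emph{strongly spin} and the use of the differential equation enter. Rather than lifting the sign ambiguity by a topological argument, I would fix $\varepsilon$ at a single point $m_0$ and \emph{define} it everywhere else by parallel transport under $\cD$; connectedness of $M$ guarantees this is well-defined and yields a smooth section, and the differential equation \eqref{eq:Differentialpolyformsystem} for $\alpha$ is exactly the integrability condition ensuring that the transported $\varepsilon$ squares back to $\alpha$ at every point.

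Next I would treat the \emph{differential} half. Writing $\cD=\nabla^g-\cA$, the equation $\cD\varepsilon=0$ reads $\nabla^g\varepsilon=\cA(\varepsilon)$. Squaring, one computes for each $v\in\mathfrak{X}(M)$ the derivative $\nabla^g_v(\varepsilon\otimes\varepsilon^\ast)$ by the Leibniz rule, using that $\nabla^g$ is metric for $\cB$ so that $\nabla^g_v(\varepsilon^\ast)=(\cA_v\varepsilon)^\ast=\varepsilon^\ast\circ\cA_v^t$; here $\cA_v^t$ is the $\cB$-adjoint, which by Lemma \ref{lemma:transposePsi} dequantizes to $(\pi^{\frac{1-\sigma}{2}}\circ\tau)(\fra_v)$. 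Applying $(\Psi^<_\Gamma)^{-1}$ and translating the operator composition in $\End(S)$ into the $\vee_g$-product via the algebra isomorphism converts $\nabla^g_v E=\cA_v\circ E+E\circ\cA_v^t$ into precisely the first equation of \eqref{eq:Differentialpolyformsystem}. The constraint $\cQ(\varepsilon)=0$ is handled identically: $\cE^\mu(\varepsilon)$ has rank one with image $\R\varepsilon$, so $\cQ\circ\cE^\mu(\varepsilon)=0$ iff $\cQ(\varepsilon)=0$, and dequantizing gives $\frq\vee_g\alpha=0$.

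The main obstacle I expect is the global sign/transport argument in the converse (algebraic) direction: one must verify that the section $\varepsilon$ built by parallel transport from a single fiber remains a \emph{square root} of $\alpha$ globally, i.e.\ that $\cE^\mu_\Gamma(\varepsilon)=\alpha$ at every point and not merely at $m_0$. This follows by showing that both $\nabla^g(\cE^\mu_\Gamma(\varepsilon))$ and $\nabla^g\alpha$ satisfy the \emph{same} first-order linear ODE along any path — namely the right-hand side of \eqref{eq:Differentialpolyformsystem} — with the same initial condition at $m_0$, whence uniqueness of solutions to linear ODEs forces them to agree. The technical care lies in confirming that the dequantized Leibniz computation of the differential half is literally the map $\alpha\mapsto \fra\vee_g\alpha+\alpha\vee_g(\pi^{\frac{1-\sigma}{2}}\circ\tau)(\fra)$, so that the two flows coincide; once this is in place the equivalence closes.
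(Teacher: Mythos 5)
Your overall strategy -- apply Theorem \ref{thm:reconstruction} fiberwise to reduce the algebraic conditions to the statement that $\alpha$ is pointwise a square, and then obtain the differential equation by dequantizing the Leibniz-rule computation $\nabla^g_v(\mu\,\varepsilon\otimes\varepsilon^\ast)=\cA_v\circ E+E\circ\cA_v^t$ through the truncated isomorphism $\Psi^{<}_\Gamma$ -- is exactly the route the paper takes (it defers the details to the analogue in \cite[Theorem 4.26]{Cortes:2019xmk}, flagging only that one must use the truncated model $(\wedge^{<}T^\ast M,\vee_g)$ in place of the full K\"ahler--Atiyah bundle, which you do). The forward direction and the treatment of the constraint $\cQ(\varepsilon)=0$ are correct, modulo the routine observation that $\Psi^{<}_\Gamma$ is parallel so that it intertwines the induced connections on $\wedge^{<}T^\ast M$ and $\End(S)$.

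There is, however, a genuine gap in your converse direction, precisely at the point you try to wave away: the claim that ``connectedness of $M$ guarantees'' that defining $\varepsilon$ by $\cD$-parallel transport from a single fiber ``is well-defined'' is false. Connectedness only guarantees that every point is reachable; it says nothing about path-independence. Your own ODE-uniqueness argument shows that for any two paths $\gamma,\gamma'$ from $m_0$ to $m$ the transports $P_\gamma(\varepsilon_{m_0})$ and $P_{\gamma'}(\varepsilon_{m_0})$ are both square roots of $\alpha_m$, hence agree only up to a sign, and this sign is a $\mathbb{Z}_2$-monodromy cocycle that need not be trivial (the squaring map is two-to-one, so $\alpha$ only determines the real line subbundle $\mathrm{im}(E)\subset S$ together with a $\cD$-parallel structure on it, i.e.\ a section of $S/\{\pm 1\}$). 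The standard resolution is not the one you invoke: the obstruction defines a flat real line bundle $L$ with structure group $\mathbb{Z}_2$, and one replaces $S$ by the twist $S\otimes L$, which is again an irreducible paired Clifford module bundle of the same type $(\Sigma,\gamma_l)$ with the same dequantization $(\fra,\frq)$ -- this is why the theorem fixes only the \emph{type} of the spinor bundle rather than the bundle itself. Note also that you misattribute the role of the ``strongly spin'' hypothesis: in this paper it guarantees the existence of the admissible pairing $\cB$ (see the remark following the definition of irreducible paired spinor bundles), not the resolution of the sign ambiguity. With the twisting argument (or an explicit verification that the monodromy cocycle is trivial) inserted, your proof closes.
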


\begin{proof}
Once Theorem \ref{thm:reconstruction} as been established, the proof is completely analogous to that of \cite[Theorem 4.26]{Cortes:2019xmk}. Since the proof of the latter is relatively technical and long, we omit it for simplicity in the exposition. The only key point to notice is that to prove the theorem in the present case we need identify the endomorphism bunlde $\End(S)$ of $S$ with the \emph{truncated model} of the K\"ahler-Atiyah bundle that we introduced in detail in the previous sections, and that we have denoted by $(T^{\ast}M,\vee_g)$, instead of identifying $\End(S)$ with the full K\"ahler-Atiyah bundle as in the proof of \cite[Theorem 4.26]{Cortes:2019xmk}.  
\end{proof}

\begin{remark}
If $\cA$ is skew-symmetric with respect to $\cB$, then \eqref{eq:Differentialpolyformsystem} simplifies to:
\begin{equation*}
\label{eq:GKSII}
\nabla^g\alpha = \fra\vee_h \alpha -  \alpha\vee_h \fra\, .
\end{equation*}

\noindent
Note that, in applications to supergravity, $\cA$ need {\em not} be skew-symmetric relative to any admissible pairing $\cB$. 
\end{remark}

\begin{remark}
Every polyform $\alpha\in \Omega^{\bullet}(M)$ acts naturally via Clifford multiplication, which in our framework is given in terms of $\Psi$ and $\Gamma$. For ease of notation we denote it by:
\begin{eqnarray*}
\alpha\cdot \varepsilon := \Gamma\circ \Psi^{-1}(\alpha)(\varepsilon) = \Psi_{\Gamma}^{<}(\,\alpha^{<} + l \ast_g\tau(\alpha^{>}))(\varepsilon)
\end{eqnarray*}

\noindent
where we have used Equation \eqref{eq:Cliffordrelation}. Note that $\Psi^{-1}(\alpha)\in \Gamma(\Cl(M,g))$ is naturally a section of the bundle of Clifford algebras $\Cl(M,g)$.
\end{remark}

\noindent
In the following we will apply the previous theorem to study differential spinors on Lorentzian three-manifolds.

% % % % % % % % % % % % % % % % % % % % % % % % % % % % % % % % % % % % % % 

\subsection{Differential spinors on Lorentzian three-manifolds}

% % % % % % % % % % % % % % % % % % % % % % % % % % % % % % % % % % % % % % 

Let $(M,g)$ be an oriented and time-oriented Lorentzian three-manifold. Denote by $\nabla^g$ the Levi-Civita connection on $(M,g)$. Every other metric connection $\nabla$ on $(M,g)$ can be written as follows:
\begin{equation*}
	\nabla_{w_1}w_2 = \nabla^g_{w_1} w_2 + A^g(w_1,w_2)\, , \qquad w_1 , w_2 \in \mathfrak{X}(M)
\end{equation*}

\noindent
in terms of a uniquely defined tensor $A^g\in \Gamma(T^{\ast}M\otimes T^{\ast}M\otimes TM)$ satisfying:
\begin{equation*}
	g(A^g(w_1,w_2),w_3) +g(w_2, A^g(w_1,w_3)) = 0
\end{equation*}

\noindent
for every $w_1,w_2,w_3\in\mathfrak{X}(M)$. For ease of notation we use the same symbol to define $A^g(w_1,w_2,w_3) := g(A^g(w_1,w_2),w_3)$, in terms of which the previous condition becomes:
\begin{equation*}
	A^g(w_1,w_2,w_3) + A^g(w_1,w_3,w_2) = 0
\end{equation*}

\noindent
Hence $A^g\in \Gamma(T^{\ast}M\otimes \wedge^2 T^{\ast}M)$. We will refer to such tensor $A^g$ as the \emph{metric contorsion tensor} of the connection $\nabla$ on $(M,g)$. The vector space of all metric contorsion tensors on $(M,g)$ identifies with all sections $\Gamma(T^{\ast}M\otimes \wedge^2 M)$ of the bundle $T^{\ast}M\otimes \wedge^2 M$. 

Every paired spinor bundle $(S,\Gamma,\cB)$ on $(M,g)$ is of rank two and equipped with a skew-symmetric non-degenerate bilinear pairing of negative adjoint type. Given $(S,\Gamma,\cB)$ and $\cA\in\Omega^1(M,\End(S))$, we denote by $\fra\in \Omega^1(M,\wedge^{<} M)$ the dequantization of the latter, which we expand as follows:
\begin{equation}
\label{eq:decompositioncA}
\fra  =  \frac{1}{2} (\theta +  \chi )
\end{equation}

\noindent
where $\theta \in \Omega^1(M)$ and $\chi\in \Gamma(T^{\ast}M\otimes T^{\ast}M)$. When necessary, we will denote by $\chi_w$ the evaluation of a vector field $w\in\mathfrak{X}(M)$ on the first entry of $\chi$. The previous expression captures the dequantization of the possible $\cA\in \Omega^1(M,\End(S))$ relative to which differential spinors can exist on $(M,g)$. 

\begin{thm}
\label{thm:differentialspinors3d}
A spin Lorentzian three-manifold $(M,g)$ admits a differential spinor relative to a paired spinor bundle $(S,\Gamma,\cB)$ and an endomorphism-valued one-form $\cA\in \Omega^1(M,\End(S))$ with dequantization $\fra\in\Omega^1(M,\wedge^{<} M)$ if and only if it admits a isotropic vector field $u\in \Omega^1(M)$ satisfying:
\begin{equation*}
 \cD^{g,\chi} u = \theta\otimes u
\end{equation*}

\noindent
where  $\cD^{g,\chi}$ is the unique metric connection on $(M,g)$ with contorsion tensor given by:
\begin{equation*}
A^g_{w_1}(w_2) = l \ast_g(\chi_{w_1}\wedge w_2)
\end{equation*}

\noindent
where  $w_1, w_2 \in \mathfrak{X}(M)$.
\end{thm}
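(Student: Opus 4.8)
The plan is to specialize the general equivalence of Theorem \ref{thm:GCKS} to the three-dimensional Lorentzian case, using the explicit description of squares of spinors obtained in Section \ref{sec:Lorentzianexample3d}. First I would recall that in signature $(2,1)$ a nonzero polyform $\alpha\in\Gamma(\wedge^{<}M)$ satisfies the algebraic constraints \eqref{eq:FierzglobalII} if and only if $\alpha = u$ for a nowhere-vanishing isotropic one-form $u\in\Omega^1(M)$; this is exactly the content of the computation in Subsection \ref{sec:Lorentzianexample3d}, globalized pointwise. Since the paired spinor bundle here has $\sigma=-1$ and $s=-1$ (skew-symmetric, negative adjoint type by Proposition \ref{prop:admissiblepairings}), the symmetry constraint $(\pi^{\frac{1-\sigma}{2}}\circ\tau)(\alpha)=s\,\alpha$ reduces to $\pi(\tau(\alpha))=-\alpha$, which is automatically satisfied by a one-form. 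So the nontrivial input from Theorem \ref{thm:GCKS} is the \emph{differential} equation \eqref{eq:Differentialpolyformsystem}, namely
\begin{equation*}
\nabla^g u = \fra\vee_g u + u\vee_g (\pi\circ\tau)(\fra)\,,
\end{equation*}
with $\frq=0$ here since we consider unconstrained differential spinors.

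The core of the argument is then to evaluate the right-hand side explicitly using the truncated geometric product $\vee_g$, whose three-dimensional form is recorded in the Example in Section \ref{sec:SpinorsAsPolyforms}. Writing $\fra = \tfrac{1}{2}(\theta+\chi)$ as in \eqref{eq:decompositioncA}, with $\theta\in\Omega^1(M)$ the scalar-plus-vector part and $\chi\in\Gamma(T^{\ast}M\otimes T^{\ast}M)$ the tensorial part, I would contract the polyform equation against an arbitrary vector field $w\in\mathfrak{X}(M)$ and compute $(\fra_w\vee_g u + u\vee_g(\pi\circ\tau)(\fra_w))$ term by term. The scalar part of $\fra_w$ contributes the $\theta\otimes u$ term on the right, while the genuine one-form part $\chi_w$ combines with $u$ through the product rule $\alpha_1\vee_h\alpha_2 = \dots + h^{\ast}(\alpha_1,\alpha_2) - \ast_h(\alpha_1\wedge\alpha_2)$; the symmetric and scalar contributions should cancel by the $(\pi\circ\tau)$ twist, leaving precisely the Hodge-dual term $l\ast_g(\chi_w\wedge u)$. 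This identifies the right-hand side, after contraction, as $\theta(w)\,u + l\ast_g(\chi_w\wedge u)$, which is exactly the statement that $\nabla^g_w u - l\ast_g(\chi_w\wedge u) = \theta(w)\,u$, i.e. $\cD^{g,\chi}u = \theta\otimes u$ for the connection with contorsion $A^g_{w_1}(w_2) = l\ast_g(\chi_{w_1}\wedge w_2)$.

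I would then check that this candidate contorsion tensor is genuinely metric, i.e. that $A^g_{w_1}$ is $g$-skew in its last two arguments; this follows because $\ast_g(\chi_{w_1}\wedge w_2)$ is, for each fixed $w_1$, an antisymmetric bilinear expression in the remaining slots, so $A^g\in\Gamma(T^{\ast}M\otimes\wedge^2 M)$ as required by the definition of contorsion given just before the theorem. The main obstacle I anticipate is the bookkeeping in the $\vee_g$-computation: one must handle the $\pi\circ\tau$ twist carefully and track the sign $l\in\mathbb{Z}_2$ coming from the Clifford relation \eqref{eq:Cliffordrelation} (the factor $l\ast_g\tau(\alpha^{>})$), since it is precisely this factor that produces the $l$ in the contorsion formula. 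The only subtlety on the algebraic side is confirming that the two-form and scalar pieces that \emph{a priori} arise from $\fra\vee_g u + u\vee_g(\pi\circ\tau)(\fra)$ indeed cancel so that the result remains a pure one-form, consistent with $\nabla^g u$ being a one-form; this cancellation is forced by the admissibility data $(s,\sigma)=(-1,-1)$ but should be verified directly.
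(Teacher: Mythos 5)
Your proposal follows essentially the same route as the paper: reduce via Theorem \ref{thm:GCKS} to the algebraic constraint forcing $\alpha=u$ isotropic (the Section \ref{sec:Lorentzianexample3d} computation), decompose $\fra=\tfrac{1}{2}(\theta+\chi)$ as in \eqref{eq:decompositioncA}, and evaluate $\fra_w\vee_g u+u\vee_g(\pi\circ\tau)(\fra_w)$ so that the scalar part yields $\theta_w u$ and the one-form commutator $\tfrac{1}{2}(\chi_w\vee_g u-u\vee_g\chi_w)$ collapses to the Hodge-dual contorsion term after the symmetric scalar pieces cancel. The paper carries out exactly this computation; your added check that the candidate contorsion is $g$-skew is a harmless supplement.
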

 
\begin{remark}
The one-form $u\in\Omega^1(M)$, or its metric dual $u^{\sharp_g}\in\mathfrak{X}(M)$, is usually referred to as the \emph{Dirac current} of $\varepsilon\in \Gamma(S)$. 
\end{remark}

\begin{proof}
 By Theorem \ref{thm:GCKS} $(M,g)$ admits a differential spinor if and only if Equations \eqref{eq:Differentialpolyformsystem} and \eqref{eq:FierzglobalI} (equivalently, Equation \eqref{eq:FierzglobalII}) are satisfied. Equation \eqref{eq:FierzglobalI} is satisfied if and only if $\alpha = u\in \Omega^1(M)$ for an isotropic one-form on $(M,g)$. Equation \eqref{eq:Differentialpolyformsystem} then becomes:
 \begin{equation*}
 \nabla^g_w u = \fra_w \vee_g u + u\vee_g (\pi \circ\tau)(\fra_w) = \frac{1}{2} (\theta_w + \chi_w) \vee_g u + \frac{1}{2} u \vee_g (\theta_w - \chi_w) = \theta_w u + \frac{1}{2}(\chi_w  \vee_g u - u \vee_g \chi_w)
 \end{equation*}
 
 \noindent
 We further compute:
 \begin{equation*}
 \chi_w  \vee_g u = l\ast_g (u\wedge \chi_w) + g(\chi_w,u)
 \end{equation*}
 
 \noindent
 and thus:
 \begin{equation*}
 \frac{1}{2}(\chi_w  \vee_g u - u \vee_g \chi_w)= l\ast_g (u\wedge \chi_w) 
 \end{equation*}
 
 \noindent
 from which the result follows.
\end{proof}

\noindent
In other words, a differential spinor on $(M,g)$ defines a bundle of isotropic lines trivialized by $u$ and preserved by $\cD^{g,\chi}$ in the \emph{direction} prescribed by $\theta$. Hence, every differential spinor defines an \emph{optical structure} with torsion that can be studied within the general framework developed in \cite{Fino:2020uuz}.  By Lemma \ref{lemma:transposePsi} it immediately follows that $\cA$ is skew, that is, the connection $(\cD - \cA)$ on $S$ preserves $\cB$, if and only if $\theta = 0$ in the decomposition \eqref{eq:decompositioncA}, namely, if and only if the Dirac current $u\in \Omega^1(M)$ is actually parallel with respect to $\cD^{g,\chi}$. In this case, the optical structure determined by $\varepsilon$ reduces to a Bargmannian structure with torsion as described in \cite{Figueroa-OFarrill:2020gpr}.

\begin{remark}
\label{remark:algebraicconstraint}
If a differential spinor is constrained to be in the kernel an endomorphism $\cQ\in\Gamma(\End(S))$ with fixed dequantization $\frq \in \Omega^1(\wedge^{<} M)$ then we simply need to add to the statement of the previous theorem the condition $\frq \vee_g u = 0$, where $u$ is the Dirac current associated to the given differential spinor.
\end{remark}
 
\noindent
Alternatively, we can rephrase the previous theorem in the following form, which is sometimes more convenient for computations.

\begin{cor}
A strongly spin Lorentzian Lorentzian three-manifold $(M,g)$ admits a differential spinor if and only if there exists a isotropic one-form $u\in \Omega^1(M)$ satisfying:
\begin{equation*}
\nabla^g_w u = \theta_w u + \ast_g (u\wedge \chi_w)\, , \qquad \forall\,\, w\in \mathfrak{X}(M)
\end{equation*}

\noindent
for a one-form $\theta\in \Omega^1(M)$ and a tensor $\chi\in \Gamma(T^{\ast}M\otimes T^{\ast}M)$.
\end{cor}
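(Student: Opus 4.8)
The plan is to obtain this corollary as a direct algebraic reformulation of Theorem \ref{thm:differentialspinors3d}, simply unwinding the definition of the connection $\cD^{g,\chi}$ in terms of the Levi-Civita connection. The theorem already asserts the equivalence between the existence of a differential spinor and the existence of an isotropic one-form $u\in \Omega^1(M)$ satisfying $\cD^{g,\chi} u = \theta\otimes u$, where $\cD^{g,\chi}$ is the metric connection whose contorsion tensor is $A^g_{w_1}(w_2) = l\ast_g(\chi_{w_1}\wedge w_2)$. So the only work is to translate the covariant derivative $\cD^{g,\chi}_w u$ into the Levi-Civita covariant derivative $\nabla^g_w u$ plus a correction term and verify that this correction is exactly $\ast_g(u\wedge \chi_w)$ (up to the sign $l$, which I will absorb or track carefully).

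First I would recall that for any metric connection $\nabla = \nabla^g + A^g$ acting on vector fields, the induced connection on one-forms is $\nabla_w u = \nabla^g_w u - A^g_w{}^{\ast}(u)$, i.e. the dual action introduces a sign relative to the action on vectors, because $A^g_w \in \Gamma(\wedge^2 T^\ast M)$ is skew-symmetric and hence acts on $u$ through minus its metric adjoint. Concretely, $(\cD^{g,\chi}_w u)(w_2) = w(u(w_2)) - u(\cD^{g,\chi}_w w_2) = (\nabla^g_w u)(w_2) - u(A^g_w(w_2))$. Substituting the prescribed contorsion $A^g_w(w_2) = l\ast_g(\chi_w\wedge w_2)$ I would compute $u(A^g_w(w_2)) = l\, u^{\sharp_g}\big(\ast_g(\chi_w\wedge w_2)\big)$ and massage this contraction, using the standard identity relating the Hodge star, the wedge, and interior multiplication in three dimensions, into the expression $-\ast_g(u\wedge\chi_w)$ evaluated on $w_2$. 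The equation $\cD^{g,\chi}_w u = \theta_w u$ then rearranges to $\nabla^g_w u = \theta_w u + \ast_g(u\wedge\chi_w)$, which is the claimed formula.

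The main obstacle — though a mild one — will be the bookkeeping of signs and of the factor $l\in\mathbb{Z}_2$ in the three-dimensional Hodge-star manipulation. In signature $(2,1)$ the relevant identity is $\iota_{u^{\sharp_g}}\ast_g(\chi_w\wedge w_2) = \pm\ast_g(u\wedge\chi_w)(w_2)$, and I need to pin down the sign coming from both the Lorentzian metric (the $\ast_g^2=\pm\id$ behaviour in odd dimension/signature) and the reordering of the wedge product. Since $l$ squares to $1$ and the expression $\ast_g(u\wedge\chi_w)$ already appeared with coefficient $l$ inside the proof of Theorem \ref{thm:differentialspinors3d} (where the term $\tfrac{1}{2}(\chi_w\vee_g u - u\vee_g \chi_w)$ was shown to equal $l\ast_g(u\wedge\chi_w)$), I expect the cleanest route is actually to bypass the abstract metric-connection computation and instead read the corollary straight off the intermediate display in that proof: there the equation $\nabla^g_w u = \theta_w u + l\ast_g(u\wedge\chi_w)$ is established before the contorsion reformulation. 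Thus the corollary is essentially this intermediate line, with the redefinition $\chi\mapsto l\chi$ (or equivalently absorbing the irrelevant overall sign $l$ into the arbitrary tensor $\chi\in\Gamma(T^\ast M\otimes T^\ast M)$, which is allowed since $\chi$ is free).

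I would therefore structure the proof as follows: state that the claim is the unwound form of Theorem \ref{thm:differentialspinors3d}; recall the intermediate identity $\nabla^g_w u = \theta_w u + l\ast_g(u\wedge\chi_w)$ from that proof; and note that since $l\in\{+1,-1\}$ and $\chi$ ranges over all of $\Gamma(T^\ast M\otimes T^\ast M)$, replacing $\chi$ by $l\chi$ leaves the class of equations unchanged, giving the stated formula $\nabla^g_w u = \theta_w u + \ast_g(u\wedge\chi_w)$. The converse direction is immediate, since any isotropic $u$ satisfying this equation defines, by reversing the computation, a parallel isotropic line for $\cD^{g,\chi}$ in the direction $\theta$, hence a differential spinor by the theorem. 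No genuinely new idea is needed beyond Theorem \ref{thm:differentialspinors3d} and the three-dimensional Hodge-star identity.
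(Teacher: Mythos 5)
Your proposal is correct and coincides with what the paper does: the corollary carries no separate proof, being exactly the intermediate identity $\nabla^g_w u = \theta_w u + l\ast_g(u\wedge\chi_w)$ established inside the proof of Theorem \ref{thm:differentialspinors3d}, with the remark following the corollary confirming that the sign $l$ is absorbed into the arbitrary tensor $\chi$. Your initial detour through the dualized contorsion computation is unnecessary, as you yourself note, and your final structure matches the paper's route.
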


\noindent
Note that, without loss of generality, in the previous corollary we have absorbed the sign $l$ into $\chi$. Theorem \ref{thm:differentialspinors3d} characterizes all possible differential spinors on an oriented and spin Lorentzian three-manifold $(M,g)$ through an adequate choices of $\theta$ and $\chi$, and hence recovers well-known types of special spinors on $(M,g)$ as particular cases. More precisely:

\begin{itemize}
	\item If $\theta = 0$ and $\chi = 0$ both vanish identically then $\varepsilon$ is a parallel spinor and we conclude that $(M,g)$ admits a parallel irreducible real spinor field if and only if it admits a isotropic vector field parallel with respect to the Levi-Civita connection. Note that this does not longer hold in higher dimensions \cite{Murcia:2020zig,Murcia:2021dur}.
	
	\item If $\theta = 0$ and $\chi = \lambda g$ for a non-zero real constant $\lambda$ then $\varepsilon$ is a standard Killing spinor and we conclude that $(M,g)$ admits an irreducible real Killing spinor if and only if it admits a isotropic vector field $u\in \mathfrak{X}(M)$ satisfying:
	\begin{equation}
	\label{eq:Killing}
	\nabla^g u =  \lambda\,  \ast_g  u 
	\end{equation}

	\noindent
	The sign factor $l\in\mathbb{Z}_2$ has been absorbed in $\lambda$ without further consequence.
	
	\item If $\theta = 0$ and $\chi\in \End(TM)$ is an endomorphism of $TM$ symmetric with respect to $g$ then $\varepsilon$ is formally an irreducible Lorentzian generalized Killing spinor, namely a type of Lorentzian analog of the notion of generalized Killing spinor introduced in \cite{MoroianuSemm,MoroianuSemmI,MoroianuSemmII} for Riemannian manifolds. In particular, $(M,g)$ admits a Lorentzian generalized Killing spinor if and only if it admits a isotropic vector field satisfying:
	\begin{equation*}
	\nabla^g_w u =   \ast_g (u\wedge \chi_w)\, , \qquad \forall\,\, w\in \mathfrak{X}(M)
	\end{equation*}
	
	\noindent
	However, this is not, strictly speaking, a generalized Killing spinor as it was originally introduced \cite{BarGM,FriedrichKim0,FriedrichKimII}, namely it is not the restriction of an irreducible parallel spinor to a hypersurface. This is because a real irreducible spinor in four Lorentzian dimensions does not descend to an irreducible spinor in three Lorentzian dimensions. Hence, a generalized Killing spinor in three Lorentzian dimensions would correspond a \emph{faithful} real spinor satisfying differential equation of the type above. To the best of our knowledge, this type of generalized Killing spinors, which cannot occur in Riemannian signature, have not been studied in the literature. 
	
	\item If $\theta = 0$ and $\chi \in \Omega^2(M)$ is a two-form, then $\varepsilon$ is a \emph{skew-torsion parallel spinor}, namely a spinor parallel under a connection with totally skew-symmetric torsion $H\in\Omega^3(M)$ given by:
	\begin{eqnarray*}
	H(w_1,w_2,w_3) = -2  (\ast_g\chi_{w_1})(w_2,w_3)\, , \qquad w_1, w_2, w_3 \in \mathfrak{X}(M)
	\end{eqnarray*}
	
	\noindent
	In particular, $(M,g)$ admits a skew-torsion parallel spinor with respect to the spinorial lift of a metric connection with totally skew-symmetric torsion $\nabla^{g,H}$ if and only if it admits a isotropic vector field parallel under the very same connection $\nabla^{g,H}$. We will further explore this case in Section \ref{sec:skewtorsion}, since it is the main object of study in this note.
	
	\item If $\chi = 0$ then the corresponding $\cA\in \Omega^1(M,\End(S))$ takes values in the line spanned by the identity isomorphism, that is, $\varepsilon\in \Gamma(S)$ satisfies:
	\begin{equation*}
	\nabla^g\varepsilon = \theta\otimes\varepsilon 
	\end{equation*}
	
	\noindent
	for a one-form $\theta\in\Omega^1(M)$. Hence, Theorem \ref{thm:differentialspinors3d} implies in this case that $(M,g)$ admits such a differential spinor if and only if:
	\begin{equation*}
	\nabla^g u = \theta\otimes u
	\end{equation*}

	\noindent
	that is, if and only if $(M,g)$ admits a recurrent isotropic line, namely a bundle of isotropic lines preserved by the Levi-Civita connection of $(M,g)$. Manifolds equipped with such a recurrent isotropic line, called \emph{Walker manifolds} in \cite{WalkerManifolds} and \emph{weakly abelian} in \cite{MehidiZeghib}, have been already considered in the literature, see for instance \cite{Chaichi,Galaev:2009ie,Galaev:2010jg} and their references and citations. Interestingly enough, general differential spinors are characterized similarly in Theorem \ref{thm:differentialspinors3d} in terms of a bundle of isotropic lines preserved by $\cD^{g,\chi}$ instead of the Levi-Civita connection.
\end{itemize}

\begin{remark}
For dimensional reasons a real Killing spinor in three Lorentzian dimensions is a particular case of a skew-torsion parallel spinor. Indeed, using the identity $\ast_g  u =  u\lrcorner_g \nu_g$ it follows that Equation \eqref{eq:Killing} is equivalent to:
\begin{equation*}
\nabla^g_w u + \lambda\,  u\lrcorner_g w\lrcorner_g \nu_g = 0
\end{equation*}

\noindent
Hence, the Dirac current $u$ associated to a Killing spinor is parallel with respect to the unique metric connection on $(M,g)$ with completely skew torsion given by $2\lambda \nu_g\in \Omega^3(M)$, where $\nu_g$ is the Lorentzian volume form on $(M,g)$. In contrast, a general skew-torsion parallel spinor is parallel with respect to the spinorial lift of a metric connection with torsion $\frf \nu_g\in \Omega^3(M)$ for a possibly not constant function $\frf\in C^{\infty}(M)$. As we will see in the following, the theory of skew-torsion parallel spinors with non-parallel torsion is markedly more complicated than the parallel torsion case.
\end{remark}

\noindent
Given $u\in\mathfrak{X}(M)$ we choose a conjugate isotropic vector field $v\in\mathfrak{X}(M)$, which by definition satisfies $g(u,v) =1$. We call such a pair $(u,v)$ a \emph{conjugate pair}. Given a conjugate pair $(u,v)$, we obtain a canonical, unit norm, space-like vector field:
\begin{eqnarray*}
n = - \ast_g (u\wedge v)
\end{eqnarray*}

\noindent
making the triple $(u,v,n)$ positively oriented. We will refer to such a triplet $(u,v,n)$ as a \emph{null coframe} of $M$. The line spanned by $n$ in $T^{\ast}M$ is the \emph{screen distribution} determined by $(u,v)$, which in this case reduces to a rank-one distribution and is therefore always integrable. For further reference we note the following identities, which are useful in computations:
\begin{equation}
\label{eq:dualhodges}
\ast_g u = - u\wedge n\, , \qquad \ast_g v = v\wedge n\, , \qquad \ast_g n = u\wedge v \, , \qquad \ast_g\nu _g = -1 \, .
\end{equation}

\noindent
Isotropic vector fields conjugate to a fixed isotropic vector field $u\in\Omega^1(M)$ are unique modulo transformations of the form:
\begin{equation*}
v \mapsto v - \frac{F^2}{2} u + f n
\end{equation*}

\noindent
for a function $F\in C^{\infty}(M)$. We denote by $\cP_u$ the set of null coframes associated to a given isotropic one-form $u\in\Omega^1(M)$. Then $\cP_u$ is a torsor over $C^{\infty}(M)$ with the following action:
\begin{equation*}
F\cdot (u,v,n) = (u, v -\frac{F^2}{2} u + F n , n - F u)
\end{equation*}

\noindent
where $F\in C^{\infty}(M)$ and $(u,v,n)\in \cP_u$. Note that for every pair of null coframes $(u,v,n) , (u,v^{\prime},n^{\prime}) \in \cP_u$ we have:
\begin{equation*}
g = u\odot v + n\otimes n = u\odot v^{\prime} + n^{\prime}\otimes n^{\prime}\, .
\end{equation*}

\noindent
We will make extensive use of null coframes in the following sections.

% % % % % % % % % % % % % % % % % % % % % % % % % % % % % % % % % % % % % % 
% % % % % % % % % % % % % % % % % % % % % % % % % % % % % % % % % % % % % %

\section{Skew-Torsion parallel spinors}
\label{sec:skewtorsion}

% % % % % % % % % % % % % % % % % % % % % % % % % % % % % % % % % % % % % % 
% % % % % % % % % % % % % % % % % % % % % % % % % % % % % % % % % % % % % %

In this section we focus on Lorentzian three-manifolds equipped with a particular class of differential spinors that we call \emph{skew-torsion parallel spinors}. These are irreducible real spinors parallel under the lift to the spinor bundle of a metric connection with totally skew-symmetric torsion. 

% % % % % % % % % % % % % % % % % % % % % % % % % % % % % % % % % % % % % %

\subsection{General characterization}

% % % % % % % % % % % % % % % % % % % % % % % % % % % % % % % % % % % % % % 

In this subsection we use Theorem  \ref{thm:differentialspinors3d} to describe skew-torsion parallel spinors in terms of an equivalent differential system for a null coframe associated to the corresponding Dirac current.

\begin{prop}
\label{prop:torsionparallel}
A three-dimensional strongly spin Lorentzian manifold $(M,g)$ admits a skew-torsion parallel spinor if and only if it admits an isotropic one-form $u\in \Omega^1(M)$ satisfying:
\begin{equation}
\label{eq:torsionparallel}
\nabla^g u = \frf \ast_g u
\end{equation}

\noindent
for a function $\frf\in C^{\infty}(M)$.  
\end{prop}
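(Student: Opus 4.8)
The plan is to trade the spinor for its Dirac current and then turn the parallel condition into a first-order equation for a one-form by a short computation with the volume form. First I would invoke the characterisation of skew-torsion parallel spinors already recorded in the discussion following Theorem~\ref{thm:differentialspinors3d}: an irreducible real spinor $\varepsilon$ is parallel under the spinorial lift of a metric connection $\nabla^{g,H}$ with totally skew-symmetric torsion $H\in\Omega^3(M)$ if and only if its Dirac current $u\in\Omega^1(M)$ is parallel under that same connection. Here $u$ is automatically isotropic, since the square of an irreducible spinor in signature $(2,1)$ is exactly a null one-form (Section~\ref{sec:Lorentzianexample3d}). The equivalence holds because the spinor-squaring map is built from $\gamma$ and $\cB$, both of which are preserved by the lift of any metric connection, so $\nabla^{g,H}\varepsilon=0$ forces $\nabla^{g,H}u=0$, while the converse is supplied by Theorem~\ref{thm:differentialspinors3d} applied to the corresponding $\cA$.

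Second, I would exploit that $\dim M=3$: the bundle $\Omega^3(M)$ is the real line bundle spanned by $\nu_g$, so $H=\frf\,\nu_g$ for a unique $\frf\in C^{\infty}(M)$, and the statement reduces to translating $\nabla^{g,H}u=0$ into an equation for $\nabla^g u$. Writing the skew-torsion connection on vectors as $\nabla^{g,H}_XY=\nabla^g_XY+\tfrac12\,H(X,Y,\cdot)^{\sharp_g}$ and dualising to one-forms gives
\[
(\nabla^g_Xu)(Y)=\tfrac12\,H(X,Y,u^{\sharp_g})=\tfrac{\frf}{2}\,\nu_g(X,Y,u^{\sharp_g}).
\]
Cyclic invariance of $\nu_g$ together with the identity $\ast_gu=u\lrcorner_g\nu_g=\iota_{u^{\sharp_g}}\nu_g$ rewrites the right-hand side as $\tfrac{\frf}{2}(\ast_gu)(X,Y)$, whence $\nabla^gu=\tfrac{\frf}{2}\ast_gu$. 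Since scaling by $\tfrac12$ is a bijection of $C^{\infty}(M)$, absorbing the factor into a redefinition of $\frf$ yields exactly \eqref{eq:torsionparallel}, and reading the computation backwards (then reconstructing $\varepsilon$ from the isotropic parallel current via Theorem~\ref{thm:differentialspinors3d}) gives the converse.

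I do not expect a genuine obstacle here, since the content is really a translation; the work lies in the bookkeeping of conventions and in making the equivalence an honest ``if and only if''. The conventions to watch are the factor $\tfrac12$ in the contorsion of a skew-torsion connection, the sign $l$, and the Lorentzian relation $\ast_g\ast_g=-\,\mathrm{id}$ on one-forms. The conceptual point that lets a single function $\frf$ suffice is three-dimensionality: collapsing $\Omega^3(M)$ onto the line $\R\,\nu_g$ is precisely what forces the torsion to be a scalar multiple of the volume form, and hence makes the one-parameter family $\nabla^gu=\frf\ast_gu$ exhaust all totally skew-symmetric torsions on $(M,g)$.
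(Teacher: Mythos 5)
Your proposal is correct and follows essentially the same route as the paper: both reduce the statement to Theorem \ref{thm:differentialspinors3d} (the paper ``for completeness'' recomputes the dequantization $\fra_w=\tfrac{l}{4}\ast_g H_w$ of $\cA_w=-\tfrac14\Gamma\circ\Psi^{-1}(H_w)$ explicitly, whereas you invoke the already-recorded corollary that the Dirac current is parallel under the same skew-torsion connection), and both then use $\Omega^3(M)=C^\infty(M)\,\nu_g$ together with $\ast_g u=u\lrcorner_g\nu_g$ to convert $\nabla^{g,H}u=0$ into $\nabla^g u=\frf\ast_g u$ up to a rescaling of $\frf$. The only cosmetic discrepancy is a sign (the paper's computation actually lands on the connection with torsion $-H$ for the Dirac current), which, as you note, is absorbed into the arbitrary function $\frf$.
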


\begin{remark}
\label{remark:Kundttorsion}
Equivalently, $u\in\Omega^1(M)$ is parallel under the unique metric connection on $(M,g)$ with totally skew-torsion $2 \,\frf\,\nu_g \in \Omega^3(M)$. In particular, it immediately follows that if $(M,g)$ is a Lorentzian three-manifold equipped with a skew-torsion parallel spinor with associated Dirac current $u\in \Omega^1(M)$, then $(M,g,u)$ is a Kundt three-dimensional space-time \cite{Boucetta:2022vny,Kundt}. Hence, this defines a particular class of Kundt three-manifolds that, to the best of our knowledge, has not been studied in the literature and certainly deserves more attention.
\end{remark}

\begin{proof}
The result follows directly from Theorem \ref{thm:differentialspinors3d}, but for completeness we prove it directly by means of Theorem \ref{def:generalizedKS}. Let $\varepsilon\in\Gamma(S)$ be a spinor parallel under the lift to the spinor bundle of a metric connection with totally skew-symmetric torsion. Then, $\varepsilon$ satisfies:
\begin{equation*}
\nabla^g_w\varepsilon + \frac{1}{4}\Gamma\circ \Psi^{-1}(H_w)(\varepsilon)= 0\, , \qquad w\in \mathfrak{X}(M)
\end{equation*}

\noindent
where $H\in \Omega^3(M)$ is a three-form. Hence, $\varepsilon$ is a differential spinor relative to an endomorphism-valued one-form $\cA\in\Omega^1(\End(S))$ given by:
\begin{eqnarray*}
\cA_w = - \frac{1}{4}\Gamma\circ \Psi^{-1}(H_w) \in \Gamma(\End(S))\, , \qquad w\in \mathfrak{X}(M)
\end{eqnarray*}

\noindent
Therefore:
\begin{equation*}
\fra_w = (\Psi_{\Gamma}^{<})^{-1}(\cA_w ) = - \frac{1}{8} (\cP_l\vert_{\wedge_l M})^{-1} (H_w + l \nu_g \diamond_g H_w)= - \frac{1}{4} \cP_{<} (H_w - l \ast_g H_w) = \frac{l}{4} \ast_g H_w
\end{equation*}

\noindent
which, by Theorem \ref{thm:differentialspinors3d}, implies:
\begin{equation*}
\nabla^g_w u =  \frac{l}{4} (\ast_g H_w\vee_g u - u \vee_g \ast_g H_w) = \frac{1}{2} H(w,u)
\end{equation*}

\noindent
Equivalently, $u$ is parallel with respect to the unique metric connection with totally skew-symmetric torsion given by $-H\in\Omega^3(M)$. Setting $H = -2\,\frf\, \nu_g$ in terms of a uniquely determined function $\frf\in C^{\infty}(M)$, we have:
\begin{equation*}
H(w,u) = -2\, \frf\, u\lrcorner_g w\lrcorner_g \nu_g = 2\, \frf\, w\lrcorner_g \ast_g u
\end{equation*}

\noindent
and hence we conclude.
\end{proof}

\noindent
By the previous result, it immediately follows that the Dirac current associated to a skew-torsion parallel spinor is Killing and its orthogonal distribution is integrable and defines a codimension one smooth foliation by null hypersurfaces. Furthermore, we have $\nabla^g_u u = 0$ whence the integral curves of $u$ define a non-expanding, non-twisting and non-shear null geodesic congruence. Therefore, a Lorentzian three-manifold $(M,g)$ equipped with a torsion parallel spinor defines an idealized gravitational wave that belongs to the Kundt class of space-times. Projecting Equation \eqref{eq:torsionparallel} to its symmetric and skew-symmetric components we immediately obtains the following corollary.

\begin{cor}
\label{cor:torsionparalleldiff}
A three-dimensional strongly spin Lorentzian manifold $(M,g)$ admits a skew-torsion parallel spinor if and only if it admits an isotropic Killing vector field $u^{\sharp_g}\in\mathfrak{X}(M)$ whose dual one-form $u\in \Omega^1(M)$ satisfies:
\begin{equation}
	\label{eq:torsionparalleldiff}
\frac{1}{2} \dd u= \frf \ast_g u
\end{equation}

\noindent
for a function $\frf\in C^{\infty}(M)$.  
\end{cor}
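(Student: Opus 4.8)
The plan is to deduce the corollary directly from Proposition \ref{prop:torsionparallel} by decomposing the tensorial equation $\nabla^g u = \frf\,\ast_g u$ into its symmetric and skew-symmetric parts. Both statements concern an isotropic one-form $u$, so isotropy is common to the two sides of the claimed equivalence, and I only need to show that, for a fixed isotropic $u$, the single equation $\nabla^g u = \frf\,\ast_g u$ is equivalent to the conjunction of ``$u^{\sharp_g}$ is Killing'' together with $\tfrac{1}{2}\dd u = \frf\,\ast_g u$.

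First I would observe that $\nabla^g u$ is naturally a section of $T^\ast M\otimes T^\ast M$ via $(\nabla^g u)(w_1,w_2) := (\nabla^g_{w_1}u)(w_2)$, whereas the right-hand side $\frf\,\ast_g u$ is a two-form: since $\dim M = 3$ and $u\in\Omega^1(M)$, its Hodge dual $\ast_g u$ lies in $\Omega^2(M)$. Hence $\frf\,\ast_g u$ is purely skew-symmetric as a two-tensor, and the equation $\nabla^g u = \frf\,\ast_g u$ holds if and only if its symmetric and skew-symmetric components hold separately.

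For the symmetric part, I would use $\Sym(\nabla^g u) = \tfrac{1}{2}\cL_{u^{\sharp_g}}g$, so that the vanishing of $\Sym(\nabla^g u)$ forced by the skewness of the right-hand side is precisely the statement that $u^{\sharp_g}$ is a Killing vector field. For the skew-symmetric part, I would invoke torsion-freeness of $\nabla^g$, which gives the standard identity $\dd u(w_1,w_2) = (\nabla^g_{w_1}u)(w_2) - (\nabla^g_{w_2}u)(w_1)$, whence $\Alt(\nabla^g u) = \tfrac{1}{2}\dd u$. Equating this with the skew-symmetric part $\frf\,\ast_g u$ of the right-hand side yields exactly $\tfrac{1}{2}\dd u = \frf\,\ast_g u$, that is, Equation \eqref{eq:torsionparalleldiff}.

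Conversely, assuming $u^{\sharp_g}$ Killing and $\tfrac{1}{2}\dd u = \frf\,\ast_g u$, I would reconstruct $\nabla^g u = \Sym(\nabla^g u) + \Alt(\nabla^g u) = 0 + \tfrac{1}{2}\dd u = \frf\,\ast_g u$ and apply Proposition \ref{prop:torsionparallel} to conclude that $(M,g)$ admits a skew-torsion parallel spinor. I expect no genuine obstacle here: the only points requiring care are the bookkeeping of the conventions relating $\dd u$, $\Alt(\nabla^g u)$, and the inclusion $\wedge^2 T^\ast M\hookrightarrow T^\ast M\otimes T^\ast M$, which together pin down the factor $\tfrac{1}{2}$ appearing in \eqref{eq:torsionparalleldiff}.
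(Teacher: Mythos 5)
Your proposal is correct and is essentially the paper's own argument: the paper simply states that the corollary follows by projecting $\nabla^g u = \frf\,\ast_g u$ onto its symmetric and skew-symmetric components, which is exactly the decomposition you carry out via $\Sym(\nabla^g u)=\tfrac{1}{2}\cL_{u^{\sharp_g}}g$ and $\Alt(\nabla^g u)=\tfrac{1}{2}\dd u$. Your version just makes explicit the bookkeeping the paper leaves implicit.
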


\noindent
For further reference we introduce the following definition. 

\begin{definition}
A skew-torsion Lorentzian three-manifold is a tuple $(M,g,u,\frf)$ consisting of a strongly spin Lorentzian three-manifold $(M,g)$ equipped with an isotropic one-form $u\in \Omega^1(M)$ satisfying Equation \eqref{eq:torsionparallel} with respect to the function $\frf\in C^{\infty}(M)$.
\end{definition}

\noindent
By Proposition \ref{prop:torsionparallel}, every skew-torsion Lorentzian carries a skew-torsion parallel spinor, which is canonical modulo a global sign.

\begin{lemma}
\label{lemma:conjugatecovarianttorsion}
The covariant derivative of a null coframe $(u,v,n)\in \cP_u$ on a skew-torsion Lorentzian three-manifold $(M,g,u,\frf)$ is given by:
\begin{eqnarray}
\label{eq:derivativeconjugateparallelismtorsion}
\nabla^g u =  \frf  \ast_g u = \frf\,n\wedge u\, , \qquad  \nabla^g v =     \kappa\otimes n -  \frf\, n\otimes v \, , \qquad \nabla^g n =  \frf \,  u\otimes v - \kappa\otimes u 
\end{eqnarray} 

\noindent
for a one-form $\kappa\in \Omega^1(M)$.
\end{lemma}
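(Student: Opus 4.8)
The plan is to exploit that $(u,v,n)\in\cP_u$ is a global coframe, so that along this parallelism the metric-compatible connection $\nabla^g$ is an $\mathfrak{so}(2,1)$-valued one-form. Since $\dim\mathfrak{so}(2,1)=3$ and the prescribed equation \eqref{eq:torsionparallel} already fixes two of these three components, a single one-form $\kappa$ should remain free, and the computation amounts to isolating it. Concretely, because $(u,v,n)$ is a coframe I would expand
\[
\nabla^g v = A\otimes u + B\otimes v + C\otimes n,\qquad \nabla^g n = D\otimes u + E\otimes v + F\otimes n
\]
for one-forms $A,\dots,F\in\Omega^1(M)$ to be determined, while for $\nabla^g u$ I would feed in the hypothesis together with the identity $\ast_g u=-u\wedge n=n\wedge u$ from \eqref{eq:dualhodges}, writing $\nabla^g u=\frf\,(n\otimes u-u\otimes n)$, so that $\nabla^g_w u=\frf\,n(w)\,u-\frf\,u(w)\,n$ after contracting the derivative slot with $w\in\mathfrak{X}(M)$.

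Next I would impose $\nabla^g g=0$ against the six induced pairings of the coframe on $\Omega^1(M)$, namely $g(u,v)=g(n,n)=1$ and $g(u,u)=g(v,v)=g(u,n)=g(v,n)=0$. Differentiating $g(v,v)=0$ forces $A=g(\nabla^g_w v,v)=0$; differentiating $g(n,n)=1$ forces $F=0$; differentiating $g(u,v)=1$ gives $B=-g(\nabla^g_w u,v)=-\frf\,n$; differentiating $g(u,n)=0$ gives $E=-g(\nabla^g_w u,n)=\frf\,u$; and differentiating $g(v,n)=0$ gives $C+D=0$. The remaining relation $g(u,u)=0$ produces no new constraint, instead serving as a consistency check on the prescribed $\nabla^g u$. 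This leaves $C$ undetermined: setting $\kappa:=C$ and hence $D=-\kappa$, and reassembling the expansions, yields precisely
\[
\nabla^g v=\kappa\otimes n-\frf\,n\otimes v,\qquad \nabla^g n=\frf\,u\otimes v-\kappa\otimes u,
\]
with the first line of the statement being nothing but \eqref{eq:torsionparallel} rewritten through \eqref{eq:dualhodges}.

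I do not anticipate a genuine obstacle here: once the coframe expansion and the dual pairings are in place, the argument is short linear bookkeeping reflecting the $\mathfrak{so}(2,1)$ structure. The two points that demand care are reading \eqref{eq:torsionparallel} as the \emph{full} $(0,2)$-tensor $\nabla^g u$, which is automatically skew and thereby encodes that $u^{\sharp_g}$ is Killing, rather than as merely its antisymmetrization, and keeping the sign conventions in the Hodge identities \eqref{eq:dualhodges} consistent; these are exactly what control the signs of $\frf$ appearing in the final formulas.
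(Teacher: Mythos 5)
Your proof is correct, and it takes a genuinely different (though closely related) route from the paper's. The paper covariantly differentiates the Hodge-duality identities \eqref{eq:dualhodges} (using that $\ast_g$ is $\nabla^g$-parallel), plugs in $\nabla^g_w u = \frf\,n(w)u - \frf\,u(w)n$, and solves the resulting wedge equations $u\wedge\nabla^g_w n = \frf\,u(w)\,u\wedge v$, etc., reading off $\nabla^g n$ up to a term proportional to $u$ and then deducing $\nabla^g v$ from the second Hodge identity; the free one-form $\kappa$ appears as the undetermined coefficient of $u$ in $\nabla^g n$. You instead expand $\nabla^g v$ and $\nabla^g n$ in the coframe with six unknown one-form coefficients and impose $\nabla^g g=0$ against the six Gram pairings, which is the standard ``connection form is $\mathfrak{so}(2,1)$-valued'' bookkeeping; your five constraints $A=0$, $F=0$, $B=-\frf\,n$, $E=\frf\,u$, $C+D=0$ are all computed correctly, the sixth pairing $g(u,u)=0$ is indeed only a consistency check on the hypothesis, and setting $\kappa:=C$ reproduces \eqref{eq:derivativeconjugateparallelismtorsion} exactly. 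Both arguments use only metricity of $\nabla^g$ (torsion-freeness only enters later, in the integrability conditions that constrain $\kappa$). Your version is arguably more systematic and makes the ``one free one-form'' count transparent from the dimension of $\mathfrak{so}(2,1)$; the paper's version is shorter on the page because the wedge equations kill two unknowns at a time. Your closing caveat about reading \eqref{eq:torsionparallel} as the full $(0,2)$-tensor (hence skew, hence $u^{\sharp_g}$ Killing) is exactly the right point to flag and matches the paper's usage.
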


\begin{proof}
A direct calculation shows that $\ast_g u = n\wedge u$ and therefore the first equation in \eqref{eq:derivativeconjugateparallelismtorsion} follows from Proposition \ref{prop:torsionparallel}. For the remaining equations, we compute by taking the covariant derivative of \eqref{eq:dualhodges} with respect to any $w\in \mathfrak{X}(M)$:
\begin{eqnarray}
& \ast_g\nabla_{w}^g u = - \nabla_{w}^g u\wedge n - u\wedge \nabla_{w}^g n \, , \qquad \ast_g\nabla_{w}^g v = \nabla_{w}^g v \wedge n + v\wedge \nabla_{w}^g n \nonumber\\ 
& \ast_g \nabla_{w}^g n = \nabla_{w}^g u\wedge v + u\wedge \nabla_{w}^g v \label{eq:covariantidentities}
\end{eqnarray}

\noindent
Plugging $\nabla^g_w u =  \frf \, w\lrcorner\ast_g u = \frf \ast_g (u\wedge w) = \frf\,n(w) u - \frf\,u(w) n$ into the first equation above yields:
\begin{equation*}
\frf \, u\wedge w = u\wedge (\frf \, n(w) n + \nabla^g_w n)
\end{equation*}

\noindent
and thus:
\begin{equation*}
\nabla^g n = \frf\, u\otimes v + (\kappa_o + \frf\, v)\otimes u
\end{equation*}

\noindent
for a one-form $\kappa_o \in \Omega^1(M)$. Plugging this equation together with the covariant derivative of $u$ into the second line of \eqref{eq:covariantidentities}, we obtain:
\begin{equation*}
\nabla^g v =  - (\kappa_o + \frf\, v)\otimes n - \frf\, n\otimes v 
\end{equation*}

\noindent
Setting $\kappa:= - (\kappa_o + \frf\, v)$ we conclude. 
\end{proof}

\noindent
Let $(u,v,n)\in \cP_u$ be a null coframe satisfying the differential system \eqref{eq:derivativeconjugateparallelismtorsion} with respect to a one-form $\kappa\in \Omega^1(M)$. Then, any other null coframe $(u,v^{\prime},n^{\prime})\in \cP_u$ satisfies again the \eqref{eq:derivativeconjugateparallelismtorsion} with respect to the one-form:
\begin{equation}
\label{eq:kapparelation}
\kappa^{\prime} = \kappa + \dd F + \frf\, F n - \frac{\frf F^2}{2} u  
\end{equation}

\noindent
and hence, the choice of null coframe in $\cP_u$ is a matter of taste or convenience. 

\begin{prop}
\label{prop:exteriorconjugateparallelismtorsion}
A null coframe $(u,v,n)\in\cP_u$ satisfies equations \eqref{eq:derivativeconjugateparallelismtorsion} with respect to $\kappa\in \Omega^1(M)$ if and only if:
\begin{eqnarray}
	\label{eq:exteriorconjugateparallelismtorsion}
	\frac{1}{2} \dd u =\frf  \ast_g u = \frf\,n\wedge u\, , \qquad  \dd v =  ( \frf v + \kappa) \wedge n\, , \qquad \dd n = u\wedge (\frf v + \kappa) 
\end{eqnarray}

\noindent
for the same one-form $\kappa\in \Omega^1(M)$.
\end{prop}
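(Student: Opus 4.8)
The plan is to deduce the exterior differential system \eqref{eq:exteriorconjugateparallelismtorsion} directly from the covariant system \eqref{eq:derivativeconjugateparallelismtorsion} by applying the standard relation between the exterior derivative and the Levi-Civita connection, namely that for any one-form $\omega$ one has $\dd\omega = \mathrm{Alt}(\nabla^g \omega)$, i.e. $\dd\omega(w_1,w_2) = (\nabla^g_{w_1}\omega)(w_2) - (\nabla^g_{w_2}\omega)(w_1)$, since $\nabla^g$ is torsion-free. The proof is thus an antisymmetrization of each of the three equations in \eqref{eq:derivativeconjugateparallelismtorsion}, and the logic runs in both directions because the symmetric part of $\nabla^g$ on a coframe is determined by the coframe together with its exterior derivatives (the Koszul-type reconstruction). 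First I would establish the forward implication by skew-symmetrizing; then I would argue the converse by showing that \eqref{eq:exteriorconjugateparallelismtorsion} together with metricity pins down $\nabla^g$ uniquely, forcing \eqref{eq:derivativeconjugateparallelismtorsion}.

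For the forward direction, I would treat each coframe member in turn. For $u$, equation $\nabla^g u = \frf\, n\wedge u$ is already skew in its two tensor slots (it equals $\frf\,\ast_g u$, a two-form), so antisymmetrizing reproduces it up to the factor of $2$, giving $\frac{1}{2}\dd u = \frf\,\ast_g u = \frf\, n\wedge u$; this is exactly Corollary \ref{cor:torsionparalleldiff}. For $v$, I would antisymmetrize $\nabla^g v = \kappa\otimes n - \frf\, n\otimes v$, obtaining $\dd v = \kappa\wedge n - \frf\, n\wedge v = (\kappa + \frf\, v)\wedge n$, which is the second equation in \eqref{eq:exteriorconjugateparallelismtorsion}. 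For $n$, antisymmetrizing $\nabla^g n = \frf\, u\otimes v - \kappa\otimes u$ yields $\dd n = \frf\, u\wedge v - \kappa\wedge u = u\wedge(\frf\, v + \kappa)$, the third equation. These are routine once the alternation formula is invoked, and I would present them compactly rather than expanding in components.

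For the converse I would argue that the data $(u,v,n)$ form a global coframe with $g = u\odot v + n\otimes n$, so $\nabla^g$ is completely determined by metricity together with the values of $\dd u, \dd v, \dd n$ via the Cartan structure equations (equivalently, the connection one-forms $\omega_{ab}$ of the coframe are fixed by the first structure equation $\dd e^a = -\omega^a{}_b\wedge e^b$ and skew-symmetry $\omega_{ab} = -\omega_{ba}$ in the frame indices with respect to the constant metric in the null basis). Feeding \eqref{eq:exteriorconjugateparallelismtorsion} into the structure equations and solving for the connection coefficients must return precisely \eqref{eq:derivativeconjugateparallelismtorsion}; since the solution of the structure equations is unique, the two systems are equivalent. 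The main obstacle is bookkeeping the null (non-orthonormal) metric components $g_{uv}=g_{vu}=1$, $g_{nn}=1$ correctly when raising and lowering frame indices, so that the skew-symmetry of the connection forms is imposed relative to this off-diagonal metric rather than an orthonormal one; care here is what guarantees that the one-form $\kappa$ appearing in the covariant system is the same as that in the exterior system. Once the index conventions of \eqref{eq:dualhodges} are used consistently, I expect no genuine difficulty, and the equivalence follows.
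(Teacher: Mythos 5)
Your proposal is correct and follows essentially the same route as the paper: the forward direction is the antisymmetrization of \eqref{eq:derivativeconjugateparallelismtorsion}, and the converse rests on the observation that the symmetric part of $\nabla^g$ on the coframe is forced once the exterior derivatives and the metric $g = u\odot v + n\otimes n$ are fixed. The paper phrases this last step through the identities $\cL_{u^{\sharp_g}}g=0$, $\cL_{v^{\sharp_g}}g=\kappa\odot n - \frf\, n\odot v$, $\cL_{n^{\sharp_g}}g=\frf\, u\odot v-\kappa\odot u$, which follow from \eqref{eq:exteriorconjugateparallelismtorsion} via Cartan's formula, whereas you invoke uniqueness of the Levi-Civita connection forms from the structure equations and metricity in the null frame --- the same Koszul-type argument in slightly different clothing.
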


\begin{proof}
The skew-symmetrization of equations \eqref{eq:derivativeconjugateparallelismtorsion} yields equations \eqref{eq:exteriorconjugateparallelismtorsion} for the same one-form $\kappa\in \Omega^1(M)$. On the other hand, the symmetrization of equations \eqref{eq:derivativeconjugateparallelismtorsion} is equivalent to:
\begin{eqnarray*}
\cL_{u^{\sharp_g}}g = 0\, , \qquad \cL_{v^{\sharp_g}}g =  \kappa\odot n -  \frf\, n\odot v \, , \qquad \cL_{n^{\sharp_g}}g = \frf \,  u\odot v - \kappa\odot u   
\end{eqnarray*}

\noindent
Assuming equation \eqref{eq:exteriorconjugateparallelismtorsion}, this equations are automatically satisfied and hence we conclude.
\end{proof}

\noindent
By the first equation in \eqref{eq:exteriorconjugateparallelismtorsion}, the distribution $u^{\perp_g}\subset TM$ determined by the kernel of $u \in \Omega^1(M)$ in $TM$ is integrable and defines a foliation $\cX_u$ of $M$ whose leaves are immersed two-dimensional submanifolds. We will refer to $\cX_u$ as the \emph{canonical foliation} of the given skew-torsion Lorentzian three-manifold $(M,g,u,\frf)$. By the first equation in \eqref{eq:exteriorconjugateparallelismtorsion}, the Godbillon-Vey class of $\cX_u$ is given explicitly by:
\begin{equation*}
\sigma_u = 4[\frf n\wedge \dd (\frf n)] = 4 [\frf^2 n\wedge u\wedge (\frf v+\kappa)] \in H^3(M,\mathbb{R})
\end{equation*}

\noindent
in terms of any null coframe $(u,v,n) \in \cP_u$. We will refer to $\sigma_u\in H^3(M,\mathbb{R})$ as the \emph{Godbillon-Vey class} of the given skew-torsion Lorentzian three-manifold $(M,g,u,\frf)$. Taking the exterior derivative of equations \eqref{eq:exteriorconjugateparallelismtorsion} we immediately the following exterior differential system:
\begin{equation*}
\label{eq:integrabilityconditions}
\dd\frf\wedge u\wedge n = 0\, , \qquad n \wedge ( \dd\frf\wedge v + \dd\kappa)   = 0\, , \qquad  (\dd \frf(n) + \frf^2 + \kappa(u) \frf)\, u \wedge v\wedge n = u\wedge \dd \kappa 
\end{equation*}

\noindent
to which we will refer as the \emph{integrability conditions} of the null coframe $(u,v,n)\in \cP_u$.  

\begin{prop}
\label{prop:dkappa}
Let $(u,v,n)\in \cP\in u$ be a null coframe satisfying equations \eqref{eq:derivativeconjugateparallelismtorsion} with respect to $\kappa\in\Omega^1(M)$. Then:
\begin{equation*}
\dd\kappa =\dd\frf(v) v\wedge u +  (\dd \frf(n) + \frf^2 + \kappa(u) \frf) v\wedge n + \mathfrak{l}\,  u\wedge n
\end{equation*}

\noindent
in terms of a function $\mathfrak{l}\in C^{\infty}(M)$.
\end{prop}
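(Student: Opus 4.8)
The plan is to exploit that $(u,v,n)$ is a coframe, so that $v\wedge u$, $v\wedge n$ and $u\wedge n$ form a pointwise basis of $\wedge^2 T^{\ast}M$, and to fix the coefficients of $\dd\kappa$ in this basis by contracting against the integrability conditions already obtained by taking $\dd$ of \eqref{eq:exteriorconjugateparallelismtorsion}. Concretely I would write $\dd\kappa = a\, v\wedge u + b\, v\wedge n + \mathfrak{l}\, u\wedge n$ for smooth functions $a,b,\mathfrak{l}$, determine $a$ and $b$, and leave $\mathfrak{l}$ free.

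The one preliminary to record is the expansion of $\dd\frf$ in the coframe. Writing $\dd\frf(w):=\dd\frf(w^{\sharp_g})=g^{\ast}(\dd\frf,w)$ and using the dual-metric relations $g^{\ast}(u,u)=g^{\ast}(v,v)=g^{\ast}(u,n)=g^{\ast}(v,n)=0$ and $g^{\ast}(u,v)=g^{\ast}(n,n)=1$, contracting $\dd\frf$ against $u^{\sharp_g},v^{\sharp_g},n^{\sharp_g}$ gives $\dd\frf=\dd\frf(v)\,u+\dd\frf(u)\,v+\dd\frf(n)\,n$. This expansion, and not the naive one, is what makes the scalars $\dd\frf(v),\dd\frf(u),\dd\frf(n)$ appear with the correct coframe component in the sequel.

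I would then substitute into the two relevant integrability conditions. In $n\wedge(\dd\frf\wedge v+\dd\kappa)=0$ the $\dd\frf(u)$ contribution vanishes already because $v\wedge v=0$, while the $\dd\frf(n)\,n\wedge v$ contribution dies under $n\wedge(-)$, and of $\dd\kappa$ only the $a\, v\wedge u$ term survives; comparing coefficients of the top form $n\wedge u\wedge v$ (using $v\wedge u=-u\wedge v$) yields $a=\dd\frf(v)$. Wedging $\dd\kappa$ with $u$ on the left annihilates every term except $b\, u\wedge v\wedge n$, so the condition $u\wedge\dd\kappa=(\dd\frf(n)+\frf^2+\kappa(u)\frf)\,u\wedge v\wedge n$ gives $b=\dd\frf(n)+\frf^2+\kappa(u)\frf$. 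Naming the remaining, unconstrained coefficient $\mathfrak{l}\in C^{\infty}(M)$ then produces exactly the stated formula.

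The argument is pure bookkeeping, so I do not expect a genuine obstacle; the only points needing care are the sign conventions in the wedge products and the correct reading of $\dd\frf(v),\dd\frf(u),\dd\frf(n)$ as metric-dual contractions rather than bare coframe components. It is worth noting that the first integrability condition $\dd\frf\wedge u\wedge n=0$, equivalently $\dd\frf(u)=0$, is not actually needed, since the $\dd\frf(u)\,v$ term drops out of $\dd\frf\wedge v$ automatically; the coefficient $\mathfrak{l}$ is precisely the component of $\dd\kappa$ left undetermined by these conditions.
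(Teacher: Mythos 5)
Your proposal is correct and follows essentially the same route as the paper: expand $\dd\kappa$ in the basis $v\wedge u$, $v\wedge n$, $u\wedge n$, substitute into the integrability conditions obtained by differentiating \eqref{eq:exteriorconjugateparallelismtorsion}, and read off the two determined coefficients, leaving $\mathfrak{l}$ free. Your explicit verification of the coframe expansion $\dd\frf=\dd\frf(v)\,u+\dd\frf(u)\,v+\dd\frf(n)\,n$ and the sign bookkeeping are exactly the details the paper leaves implicit.
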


\begin{proof}
We expand:
\begin{eqnarray*}
\dd\kappa = \dd\kappa(u,v) v\wedge u + \dd\kappa(u,n) v\wedge n + \dd\kappa(v,n) u\wedge n
\end{eqnarray*}

\noindent
in terms of the given null coframe $(u,v,n)$. Substituting this expression into the integrability equations and isolating for the coefficients of the expansion we obtain the expression in the statement of the proposition. 
\end{proof}

\noindent
We end this subsection summarizing the equivalent descriptions of skew-torsion Lorentzian three-manifolds contained in propositions \ref{prop:torsionparallel} and \ref{prop:exteriorconjugateparallelismtorsion}.

\begin{cor}
\label{cor:conjugateparalleltorsion}
A three-dimensional strongly spin Lorentzian manifold $(M,g)$ admits a skew-torsion parallel spinor if and only if it admits a null coframe $(u,v,n)\in \cP_u$ satisfying equations \eqref{eq:derivativeconjugateparallelismtorsion}, or, equivalently, equations \eqref{eq:exteriorconjugateparallelismtorsion}, for some one-form $\kappa\in \Omega^1(M)$.
\end{cor}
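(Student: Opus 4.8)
The plan is to obtain the corollary as a direct concatenation of Proposition \ref{prop:torsionparallel}, Lemma \ref{lemma:conjugatecovarianttorsion} and Proposition \ref{prop:exteriorconjugateparallelismtorsion}, the only genuinely new point being the global existence of a null coframe in $\cP_u$. For the forward implication, assume $(M,g)$ carries a skew-torsion parallel spinor $\varepsilon$. Proposition \ref{prop:torsionparallel} then produces a nowhere-vanishing isotropic one-form $u\in\Omega^1(M)$, the Dirac current of $\varepsilon$, satisfying $\nabla^g u = \frf\ast_g u$ for some $\frf\in C^{\infty}(M)$; this exhibits $(M,g,u,\frf)$ as a skew-torsion Lorentzian three-manifold.

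Next I would construct a global null coframe associated with $u$. Since $(M,g)$ is time-oriented it carries a globally defined timelike vector field $T$, and a nonzero null vector is never $g$-orthogonal to a timelike one, so $g(u^{\sharp_g},T)$ is nowhere zero. Setting $\tilde{v} := T/g(u^{\sharp_g},T)$ and then $v := \tilde{v} - \frac{1}{2}\,g(\tilde{v},\tilde{v})\,u^{\sharp_g}$ yields a globally defined isotropic vector field conjugate to $u^{\sharp_g}$, and the spacelike vector $n = -\ast_g(u\wedge v)$ completes $(u,v,n)$ to an element of $\cP_u$, so that $\cP_u \neq \emptyset$. Applying Lemma \ref{lemma:conjugatecovarianttorsion} to this coframe yields equations \eqref{eq:derivativeconjugateparallelismtorsion} for a suitable $\kappa\in\Omega^1(M)$, and Proposition \ref{prop:exteriorconjugateparallelismtorsion} shows these are equivalent to \eqref{eq:exteriorconjugateparallelismtorsion}.

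For the converse I would read off the first equation of \eqref{eq:derivativeconjugateparallelismtorsion}: a null coframe $(u,v,n)\in\cP_u$ satisfying that system has $\nabla^g u = \frf\ast_g u$, which is exactly the hypothesis of Proposition \ref{prop:torsionparallel}, whence $(M,g)$ admits a skew-torsion parallel spinor; Proposition \ref{prop:exteriorconjugateparallelismtorsion} again allows \eqref{eq:exteriorconjugateparallelismtorsion} to be used interchangeably as the starting hypothesis. The only step requiring care beyond citing the preceding results is the global construction of $v$, and hence of the null coframe; I expect this to be the main, though mild, obstacle, and the argument above resolves it using time-orientability together with orientability, both of which are standing hypotheses on $(M,g)$.
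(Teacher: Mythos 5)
Your proposal is correct and follows exactly the route the paper intends: the corollary is stated there as a direct summary of Proposition \ref{prop:torsionparallel}, Lemma \ref{lemma:conjugatecovarianttorsion} and Proposition \ref{prop:exteriorconjugateparallelismtorsion}, with no further argument given. Your explicit global construction of a conjugate null field $v$ (and hence of a coframe in $\cP_u$) via a global timelike vector field supplied by time-orientability is a detail the paper leaves implicit, and it is carried out correctly.
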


% % % % % % % % % % % % % % % % % % % % % % % % % % % % % % % % % % % % % % 

\subsection{Curvature}

% % % % % % % % % % % % % % % % % % % % % % % % % % % % % % % % % % % % % % 
 
The curvature tensor of a skew-torsion Lorentzian three-manifold $(M,g)$ is highly constrained. We compute in the following both the Riemmann and Ricci curvatures of $g$, which in three-dimensions are equivalent curvature tensors. Denote by $\dd_{\nabla^g}\colon \Omega^1(M,\wedge^1 M) \to \Omega^2(M,\wedge^1 M)$ the exterior covariant derivative associated to the Levi-Civita connection $\nabla^g$. Using equations \eqref{eq:derivativeconjugateparallelismtorsion} together with the integrability conditions of the given null coframe $(u,v,n)$, we compute:
\begin{eqnarray*}
&\dd_{\nabla^g}\nabla^g u = \frf^2 u\wedge v \otimes u + u\wedge n \otimes (\dd \frf + \frf^2 n) = (\dd \frf\wedge n + \frf^2 u\wedge v)\otimes u - (\dd\frf\wedge u + \frf^2 n\wedge u)\otimes n\\
&\dd_{\nabla^g}\nabla^g v = (\dd\kappa + \frf\, n\wedge \kappa)\otimes n - (\dd \frf\wedge n + \frf^2 u\wedge v) \otimes v  \\
&\dd_{\nabla^g}\nabla^g n = (\dd \frf\wedge u + \frf^2 n\wedge u) \otimes v - (\dd\kappa + \frf\, n\wedge \kappa)\otimes u    
\end{eqnarray*}

\noindent
These expressions immediately imply that the Riemann tensor $\cR^g\in \Gamma(\wedge^2 M \otimes \wedge^2 M)$ of the Lorentzian metric $g=u\odot v + n\otimes n$ is given by:
\begin{eqnarray*}
\cR^g =  (\dd\frf \wedge n + \frf^2 u \wedge v)\otimes v\wedge u - (\dd\frf \wedge u + \frf^2 n\wedge u)\otimes v\wedge n + (\dd\kappa + \frf\, n\wedge \kappa )\otimes u\wedge n
\end{eqnarray*}

\noindent
Written in this form, the celebrated symmetries of the Riemann tensor are not apparent. Using the integrability conditions of the given null coframe, $\cR^g$ can be equivalently written as follows:
\begin{eqnarray*}
& \cR^g =  \dd\frf(v)  (u\wedge n\otimes v\wedge u + v\wedge u\otimes u\wedge n) + \frf^2 u \wedge v \otimes v\wedge u\\
& - (\dd\frf(n) +  \frf^2  )(n\wedge u \otimes v\wedge n  + v\wedge n \otimes  n\wedge u) + (\frf\, \kappa(v) - \mathfrak{l}) n\wedge u\otimes u\wedge n
\end{eqnarray*}

\noindent
from which the Ricci tensor of $g=u\odot v + n\otimes n$ can be computed to be:
\begin{eqnarray}
\label{eq:Riccicurvature}
\mathrm{Ric}^g = - \dd\frf(v) u\odot n - (2\frf^2 + \dd\frf(n)) g - \dd\frf(n)\, n\otimes n + (\frf\, \kappa(v) - \mathfrak{l})   u\otimes u 
\end{eqnarray}

\noindent
From this formula we immediately obtain that the scalar curvature of a skew-torsion Lorentzian three-manifold is given by:
\begin{eqnarray*}
\mathrm{s}^g =- (6\frf^2 + 4 \dd\frf(n))   
\end{eqnarray*}

\noindent
These explicit expressions for the Ricci and scalar curvatures of a skew-torsion Lorentzian three-manifold imply the following corollary.

\begin{cor}
\label{cor:Einsteincondition}
A skew-torsion Lorentzian three-manifold is Einstein if and only if and only if $\frf$ is constant and $\frf\, \kappa(v) = \mathfrak{l}$.
\end{cor}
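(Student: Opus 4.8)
The plan is to read off the Einstein condition directly from the explicit Ricci formula \eqref{eq:Riccicurvature} that has already been established. Recall that a pseudo-Riemannian manifold is \emph{Einstein} precisely when $\mathrm{Ric}^g = \frac{\mathrm{s}^g}{d}\, g$, i.e.\ when the trace-free part of the Ricci tensor vanishes. Since $d=3$ here, the strategy is to isolate the non-$g$ terms in \eqref{eq:Riccicurvature} and demand that they all vanish, exploiting the fact that a null coframe $(u,v,n)$ gives a pointwise basis of $T^\ast M$ in which the symmetric tensors $u\odot n$, $n\otimes n$, $u\otimes u$ and $g = u\odot v + n\otimes n$ are linearly independent.

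Concretely, I would first rewrite \eqref{eq:Riccicurvature} as
\begin{equation*}
\mathrm{Ric}^g + (2\frf^2 + \dd\frf(n))\, g = - \dd\frf(v)\, u\odot n - \dd\frf(n)\, n\otimes n + (\frf\,\kappa(v) - \mathfrak{l})\, u\otimes u\, .
\end{equation*}
The Einstein condition is equivalent to the right-hand side being a pointwise scalar multiple of $g$. Since the tensors $u\odot n$, $n\otimes n$, $u\otimes u$ appearing on the right are each linearly independent from $g$ and from one another in the basis furnished by the null coframe (note that $g$ has a nonzero $u\odot v$ component which none of these three possess), the only way the right-hand side can be proportional to $g$ is for it to vanish identically and for its scalar coefficient to be zero. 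Matching the coefficient of the independent tensor $u\odot n$ forces $\dd\frf(v) = 0$; matching that of $n\otimes n$ forces $\dd\frf(n) = 0$; and matching that of $u\otimes u$ forces $\frf\,\kappa(v) = \mathfrak{l}$.

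It then remains to interpret these three scalar conditions. From $\dd\frf(n)=0$ and $\dd\frf(v)=0$, together with the first integrability condition $\dd\frf\wedge u\wedge n = 0$ of the null coframe (which already forces $\dd\frf(v)=0$, i.e.\ $\dd \frf$ has no $v$-component), one concludes that $\dd\frf$ has vanishing $v$- and $n$-components, so $\dd\frf = \dd\frf(u)\, \text{(dual of }u)$ lies in the line spanned by $u$; but the symmetry of the Ricci tensor, or equivalently re-examining that $\dd\frf$ must be consistent with the closed-form constraints, pins down $\dd\frf(u)=0$ as well, giving $\dd\frf = 0$, i.e.\ $\frf$ constant. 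The surviving condition $\frf\,\kappa(v) = \mathfrak{l}$ is then exactly the second assertion in the corollary. The main obstacle I anticipate is the bookkeeping in the last step: showing that the three vanishing derivative conditions combine with the integrability identities to give genuine constancy of $\frf$ rather than merely $\dd\frf \in \langle u\rangle$. I would handle this by invoking the first integrability equation $\dd\frf\wedge u\wedge n=0$ to eliminate the $u$-direction ambiguity, since that wedge relation constrains $\dd\frf$ to lie in $\langle u, n\rangle$, and combined with $\dd\frf(n)=0$ and $\dd\frf(v)=0$ this forces $\dd\frf=0$. The converse direction is immediate: substituting $\frf$ constant and $\frf\,\kappa(v)=\mathfrak{l}$ back into \eqref{eq:Riccicurvature} collapses it to $\mathrm{Ric}^g = -2\frf^2 g$, which is manifestly Einstein.
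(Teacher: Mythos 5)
Your proposal is correct and follows essentially the same route as the paper, which simply reads the Einstein condition off the explicit Ricci formula \eqref{eq:Riccicurvature}: the absence of a $u\odot v$ term on the non-$g$ side forces the proportionality factor to vanish, yielding $\dd\frf(v)=\dd\frf(n)=0$ and $\frf\,\kappa(v)=\mathfrak{l}$, and the integrability condition $\dd\frf\wedge u\wedge n=0$ supplies the missing third component so that $\dd\frf=0$. One small notational slip: since $v^{\sharp_g}$ is dual to $u$ in the null coframe, $\dd\frf(v)=0$ kills the $u$-component of $\dd\frf$ (not the $v$-component), and the wedge relation is what kills the $v$-component (equivalently $\dd\frf(u)=0$); your three conditions still annihilate all three components, so the conclusion stands.
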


\noindent
The fact that $(M,g)$ is Einstein only if the torsion is parallel, points out to the Einstein condition not being natural for general skew-torsion Lorentzian three-manifolds. In Section \ref{sec:susygerbes} we will encounter a curvature condition naturally compatible with skew-torsion Lorentzian three-manifolds in the context of supersymmetric solutions of supergravity.  

\begin{example}
\label{ep:exampleGodbillonVey}
The Godbillon-Vey class of a skew-torsion Lorentzian three-manifold is only relevant if $M$ is compact, since otherwise $H^3(M,\mathbb{R}) = \left\{ 0\right\}$. Setting $\kappa = \frf v$, the differential system \eqref{eq:exteriorconjugateparallelismtorsion} reduces to:
\begin{eqnarray*}
\frac{1}{2} \dd u = \frf\,n\wedge u\, , \qquad  \frac{1}{2}\dd v = \frf \, v \wedge n\, , \qquad \frac{1}{2}\dd n = \frf \, u\wedge v\, .
\end{eqnarray*}

\noindent
Assuming now that $\frf$ is in addition a non-zero constant, the first line in the previous differential system is satisfied by every right invariant, appropriately normalized, coframe on $\mathrm{Sl}(2,\mathbb{R})$. Hence, every right-invariant coframe on $\mathrm{Sl}(2,\mathbb{R})$ can be normalized as to define a null coframe $(u,v,n)$ associated to skew-torsion parallel spinor on $\mathrm{Sl}(2,\mathbb{R})$ with respect to the metric $g = u\odot v + n\otimes n$. In particular, $\mathrm{Sl}(2,\mathbb{R})$, as well as its many discrete quotients, admit plenty of skew-torsion parallel spinors with respect to left-invariant Lorentzian metrics. The corresponding Godbillon-Vey invariant is given by:
\begin{eqnarray*}
\sigma_u =  8 \frf^3 [ u\wedge v\wedge n] \in H^3(M,\mathbb{R})
\end{eqnarray*}

\noindent
and hence by appropriately choosing the constant $\frf$ we can realize every possible Godbillon-Vey invariant.
\end{example}

\begin{remark}
Going beyond the Godbillon-Vey class, there is a class of \emph{secondary invariants} associated to codimension-one foliations, such as the invariant introduced by Losik \cite{LosikI,LosikII,BazaikinGalaev}. It would be interesting to study these secondary invariants for the codimension-one foliations ocurring in skew-torsion Lorentzian three-manifolds.
\end{remark}

% % % % % % % % % % % % % % % % % % % % % % % % % % % % % % % % % % % % % % 

\subsection{Local adapted coordinates}

% % % % % % % % % % % % % % % % % % % % % % % % % % % % % % % % % % % % % % 

In the following we solve the differential system \eqref{eq:derivativeconjugateparallelismtorsion} in local coordinates adapted to the existence of a skew-torsion parallel spinor on $(M,g)$. 

\begin{lemma}
Let $(M,g,u,\frf)$ be a skew-torsion Lorentzian three-manifold. Then, there exist local coordinates $(x_u,x_v,z)$ in which the metric $g$ takes the form:
\begin{eqnarray*}
g = \cH\, \dd x_u\otimes \dd x_u + e^{\cF} \dd x_u\odot \dd x_v + \dd z \otimes \dd z
\end{eqnarray*}

\noindent
in terms of local functions $\cH$ and  $\cK$ depending only on the coordinates $x_u$ and $z$.  
\end{lemma}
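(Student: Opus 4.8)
The plan is to construct the claimed coordinates in stages, starting from the exterior differential system \eqref{eq:exteriorconjugateparallelismtorsion} for a conveniently chosen null coframe $(u,v,n)\in\cP_u$. First I would exploit the freedom in the choice of null coframe recorded in \eqref{eq:kapparelation} to simplify $\kappa$, ideally reducing to the gauge $\kappa=\frf\,v$ whenever possible, or at least to a form in which $u$ is exact up to a conformal factor. The key structural input is the first equation $\tfrac12\,\dd u=\frf\,n\wedge u$, which asserts that $u$ is integrable in the Frobenius sense ($u\wedge\dd u=0$); hence there exist local functions $x_u$ and a positive factor such that $u=e^{\cF}\,\dd x_u$ for some function $\cF$, giving the first coordinate $x_u$ and explaining the conformal prefactor $e^{\cF}$ in the off-diagonal term of $g$.

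Next I would produce the transverse coordinate $z$ along the screen distribution. Since $u^{\sharp_g}$ is an isotropic Killing field by Corollary \ref{cor:torsionparalleldiff} and $n$ is the unit spacelike vector completing the null coframe, I would look for a function $z$ with $\dd z=n$, or more realistically with $n=\dd z$ after a further adapted modification of the coframe; the identity $\nabla^g_u u=0$ together with the non-expanding, non-twisting, non-shearing character of the congruence generated by $u$ (the Kundt property from Remark \ref{remark:Kundttorsion}) is what guarantees that the screen direction closes up into a genuine coordinate. The remaining coordinate $x_v$ would be adapted to the flow of $u^{\sharp_g}$, using that $u^{\sharp_g}$ is Killing so that $g$ has no explicit $x_v$-dependence; this is the mechanism forcing the component functions $\cH$ and $\cK$ to depend only on $x_u$ and $z$. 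Writing $g=u\odot v+n\otimes n$ in the completed chart and reading off the components then yields the stated normal form, with $\cH$ encoding the $\dd x_u\otimes\dd x_u$ term and $\cF$ the off-diagonal factor.

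The main obstacle I anticipate is the simultaneous integration of all three structure equations in \eqref{eq:exteriorconjugateparallelismtorsion} in a mutually compatible chart: integrating $\dd u$ alone is immediate by Frobenius, but arranging that the \emph{same} coordinates also trivialize $\dd n$ and $\dd v$ requires careful use of the integrability conditions following Proposition \ref{prop:exteriorconjugateparallelismtorsion}, in particular $\dd\frf\wedge u\wedge n=0$, which constrains $\frf$ to depend only on $x_u$ and $z$ and is precisely what propagates that dependence to $\cH$ and $\cK$. A secondary subtlety is the coframe gauge choice: the transformation law \eqref{eq:kapparelation} must be used to kill the unwanted components of $\kappa$ without reintroducing $x_v$-dependence, and one should check that the residual freedom $v\mapsto v-\tfrac{F^2}{2}u+Fn$ can be fixed consistently with $n=\dd z$. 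Once the dependence structure of $\frf$ and $\kappa$ is pinned down by the integrability conditions, the final reading-off of the metric components is routine.
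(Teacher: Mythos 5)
Your plan follows essentially the same route as the paper: Frobenius applied to $u\wedge\dd u=0$ gives $u=e^{\cF}\dd x_u$, and the fact that $u^{\sharp_g}$ is an isotropic Killing field is what removes the $x_v$-dependence of the metric components. Two remarks are in order, however. First, you never address the elimination of the cross term: after choosing $x_u$ from Frobenius, $x'_v$ from the flow of $u^{\sharp_g}$, and $z$ as the arc-length parameter of the one-dimensional transverse direction, the metric still has the form $\cH''\,\dd x_u\otimes\dd x_u+e^{\cF}\dd x_u\odot\dd x'_v+F'\,\dd x_u\odot\dd z+\dd z\otimes\dd z$, and "reading off the components of $g=u\odot v+n\otimes n$" does not by itself produce the stated normal form. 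The paper removes the term $F'\,\dd x_u\odot\dd z$ by the explicit shift $x_v:=x'_v+\int e^{-\cF}F'\,\dd z$, which leaves $u^{\sharp_g}=\partial_{x_v}$ unchanged; this step is missing from your argument and is needed for the conclusion.

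Second, you overcomplicate the transverse coordinate and the dependence structure. The coordinate $z$ requires no appeal to the Kundt (non-expanding, non-twisting, non-shearing) properties of the congruence: the screen is one-dimensional, so its induced metric $e^{2\cK}\dd y_2\otimes\dd y_2$ is normalized simply by setting $z=\int e^{\cK}\dd y_2$. Likewise, the gauge-fixing of the one-form $\kappa$ via \eqref{eq:kapparelation}, the simultaneous integration of the structure equations for $v$ and $n$, and the integrability condition $\dd\frf\wedge u\wedge n=0$ play no role in this lemma; they only enter afterwards, when one pins down $\cF$ in terms of $\frf$ (as in Proposition \ref{prop:localskewtorsion}). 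The only mechanism forcing $\cH$ and $\cF$ to depend on $(x_u,z)$ alone is the Killing property of $u^{\sharp_g}$, not the constraint on $\frf$; attributing it to the latter, as you do at the end, conflates two different steps of the construction.
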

 
\begin{proof}
Choose local coordinates $(x^{\prime}_v,y_1,y_2)$ such that $u^{\sharp_g} = \partial_{x^{\prime}_v}$. Since $\partial_{x^{\prime}_v}$ is isotropic and Killing, in these coordinates the metric $g$ reads:
\begin{equation*}
g = g_{i x^{\prime}_v} \dd x^{\prime}_v \odot \dd y_i + g_{ij}\, \dd y_i \otimes \dd y_j
\end{equation*}

\noindent
where the coefficients depend only on $(y_1,y_2)$. Since $u\wedge \dd u = 0$, the Frobenius theorem implies that $u = e^{\cF} \dd \kappa$ in terms of some locally defined functions $\cF$ and $\kappa$. In particular:
\begin{eqnarray*}
e^{\cF} \dd \kappa = g_{1 x^{\prime}_v} \dd y_1 + g_{2 x^{\prime}_v} \dd y_2
\end{eqnarray*}

\noindent
Since $u\neq 0$, we can assume, perhaps after relabelling the coordinates, that $g_{1 x^{\prime}_v} \neq 0$ locally around $m\in M$. Hence, $(x^{\prime}_v,\kappa,y_2)$ are local functions around $m\in M$ with linearly independent differentials and thus define a local coordinate system around $m\in M$. Setting $x_u:= \kappa_2$ the metric is locally given by:
\begin{eqnarray*}
g = \cH^{\prime}\, \dd x_u\otimes \dd x_u + e^{\cF} \dd x_u\odot \dd x^{\prime}_v + F\, \dd x_u\odot \dd y_2   +  e^{2\cK} \dd y_2 \otimes \dd y_2
\end{eqnarray*}

\noindent
for a local functions $\cH^{\prime}$, $\cF$, $\cK$ and $F$ depending only on  $x_u$ and $y_2$. Defining now the coordinate:
\begin{equation*}
z = \int e^{\cK} \dd y_2
\end{equation*}

\noindent
we obtain:
\begin{eqnarray*}
g = \cH^{\prime\prime}\, \dd x_u\otimes \dd x_u + e^{\cF} \dd x_u\odot \dd x^{\prime}_v + F^{\prime}\, \dd x_u\odot \dd z   +  \dd z \otimes \dd z
\end{eqnarray*}

\noindent
in terms of new functions $\cH^{\prime\prime}$ and $F^{\prime}$ depending only on $x_u$ and $z$. Introducing now the new coordinate:
\begin{eqnarray*}
x_v := x^{\prime}_v + \int e^{-\cF} F^{\prime} \dd z
\end{eqnarray*}

\noindent
we eliminate the \emph{crossed term} $F^{\prime}\, \dd x_u\odot \dd z$ and we conclude after a relabelling the coefficients of the metric. Note that $u^{\sharp_g} = \partial_{x^{\prime}_v} = \partial_{x_v}$.
\end{proof}

\noindent
Since the isotropic vector field $u^{\sharp_g}\in \mathfrak{X}(M)$ in local adapted coordinates is given by $u^{\sharp_g} = \partial_{x_v}$, we have $u = e^{\cF} \dd x_u$. A canonical choice of local vector field $v^{\sharp_g}$ that is locally conjugate to $u^{\sharp_g}$ is given by:
\begin{equation*}
v^{\sharp_g} = e^{-\cF} \partial_{x_u} - \frac{1}{2} \cH e^{-2\cF} \partial_{x_v}
\end{equation*}

\noindent
whence $v = \frac{1}{2} \cH_{x_u} e^{-\cF}  \dd x_u + \dd x_v$ in local adapted coordinates. From this, it immediately follows that:
\begin{equation}
\label{eq:localconjugate}
(u = e^{\cF} \dd x_u ,v = \frac{1}{2}\cH e^{-\cF}  \dd x_u + \dd x_v, n = e^{\cK} \dd z ) \in \cP_u
\end{equation}

\noindent
is a local null coframe associated to $u$. In particular, we have:
\begin{equation*}
n^{\sharp_g} = e^{-\cK} \partial_z
\end{equation*}

\noindent
for the metric dual of $n$.  
 
\begin{lemma}
Let $(M,g,u,\frf)$ be a skew-torsion Lorentzian three-manifold. Then, the local null coframe \eqref{eq:localconjugate} satisfies:
\begin{eqnarray}
\label{eq:generallocalequations}
\partial_z \cF = 2\frf \, , \qquad \partial_z \cH - \frf \,\cH +  2 \kappa_u e^{\cF} = 0\, , \qquad  \kappa_z = 0
\end{eqnarray}

\noindent
where we have written locally $\kappa = \kappa_{x_u} \dd x_u + \kappa_{x_v} \dd x_v + \kappa_{z} \dd z$. Furthermore,  $\kappa_v = - \frf$.
\end{lemma}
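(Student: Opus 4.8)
The plan is to use the exterior differential system satisfied by every null coframe associated to a skew-torsion parallel spinor, established in Proposition~\ref{prop:exteriorconjugateparallelismtorsion}, and to extract the three equations of \eqref{eq:generallocalequations} together with the relation $\kappa_v = -\frf$ by matching coefficients against the explicit local one-forms \eqref{eq:localconjugate}. Concretely, I would compute the left-hand sides $\dd u$, $\dd v$, $\dd n$ of \eqref{eq:exteriorconjugateparallelismtorsion} directly in the coordinates $(x_u,x_v,z)$, expand the right-hand sides in the basis of two-forms $\{\dd x_u\wedge \dd x_v,\ \dd x_u\wedge \dd z,\ \dd x_v\wedge \dd z\}$, and compare. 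Since the $\dd z\otimes \dd z$ term of $g$ has unit coefficient, the spacelike unit covector of \eqref{eq:localconjugate} is $n=\dd z$, so that $\dd n = 0$.

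First I would differentiate $u = e^{\cF}\dd x_u$: because $\cF$ depends only on $(x_u,z)$, the wedge with $\dd x_u$ annihilates the $\partial_{x_u}$-term and leaves $\dd u = e^{\cF}\,(\partial_z\cF)\,\dd z\wedge \dd x_u$. Comparison with the first equation $\tfrac12\dd u = \frf\,n\wedge u = \frf\,e^{\cF}\,\dd z\wedge \dd x_u$ then yields $\partial_z\cF = 2\frf$. Using this, I would differentiate $v = \tfrac12\cH e^{-\cF}\dd x_u + \dd x_v$; once more only the $\dd z$-derivative of the coefficient survives the wedge with $\dd x_u$, giving $\dd v = \tfrac12 e^{-\cF}(\partial_z\cH - 2\frf\,\cH)\,\dd z\wedge \dd x_u$. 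Expanding $(\frf v + \kappa)\wedge n$ and comparing, the $\dd x_v\wedge \dd z$ component forces $\frf + \kappa_{x_v}=0$, i.e.\ $\kappa_v = -\frf$, while the $\dd x_u\wedge \dd z$ component gives $-\tfrac12 e^{-\cF}(\partial_z\cH - 2\frf\,\cH) = \tfrac12\frf\,\cH e^{-\cF} + \kappa_{x_u}$, which after clearing the factor $-2e^{\cF}$ is exactly $\partial_z\cH - \frf\,\cH + 2\kappa_u e^{\cF} = 0$. Finally, $\dd n = 0 = u\wedge(\frf v + \kappa)$ forces, through the vanishing of its $\dd x_u\wedge \dd z$ coefficient, the remaining relation $\kappa_z = 0$ (its $\dd x_u\wedge \dd x_v$ coefficient reproduces $\kappa_v = -\frf$, a consistency check).

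The calculation is essentially bookkeeping, so the step I would handle most carefully is the order of operations: the exterior derivative of $v$ must first be simplified with the identity $\partial_z\cF = 2\frf$ coming from the $u$-equation before the combination $\partial_z\cH - \frf\,\cH$ becomes visible, and one should track the two independent occurrences of $\kappa_v = -\frf$ (from $\dd v$ and from $\dd n$) to confirm that \eqref{eq:exteriorconjugateparallelismtorsion} is satisfied consistently rather than being overdetermined. I do not expect any deeper obstacle, since Proposition~\ref{prop:exteriorconjugateparallelismtorsion} already guarantees that such a coframe exists; the present lemma is merely its coordinate transcription.
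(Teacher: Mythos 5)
Your proposal is correct and is essentially the paper's own proof: the paper simply states that evaluating the exterior differential system \eqref{eq:exteriorconjugateparallelismtorsion} on the coframe \eqref{eq:localconjugate} yields the stated equations, and your coefficient-matching computation (including reading $n=\dd z$ from the unit $\dd z\otimes\dd z$ term of the metric, and obtaining $\kappa_{x_v}=-\frf$ consistently from both the $\dd v$ and $\dd n$ equations) is exactly that verification, carried out explicitly and accurately.
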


\begin{proof}
$(M,g,u,\frf)$ is a skew-torsion Lorentzian three-manifold if and only if the null coframe \eqref{eq:localconjugate} satisfies the differential system \eqref{eq:exteriorconjugateparallelismtorsion}, which, evaluated on \eqref{eq:localconjugate} is equivalent to  the equations in the statement of the lemma.
\end{proof}

\begin{prop}
\label{prop:localskewtorsion}
Let $(M,g,u,\frf)$ be a skew-torsion Lorentzian three-manifold. Around every point $m\in M$ there exist local coordinates $(x_u,x_v,z)$ in which the metric $g$ is given by:
\begin{eqnarray*}
g = \cH\, \dd x_u\otimes \dd x_u + e^{2\int \frf\,\dd z + c(x_u)} \dd x_u\odot \dd x_v + \dd z \otimes \dd z
\end{eqnarray*}

\noindent
for a function $\cH$ of $x_u$ and $z$ and a function $c(x_u)$ depending only on $x_u$.
\end{prop}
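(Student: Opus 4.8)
The plan is to read the claim off the two preceding lemmas of this subsection, the only genuine computation being a single integration in the coordinate $z$. By the first lemma I may assume that around $m\in M$ there are local coordinates $(x_u,x_v,z)$ in which
\begin{equation*}
g = \cH\, \dd x_u\otimes \dd x_u + e^{\cF} \dd x_u\odot \dd x_v + \dd z \otimes \dd z,
\end{equation*}
with $\cH$ and $\cF$ functions of $x_u$ and $z$ alone, and in which the associated local null coframe is \eqref{eq:localconjugate}. It therefore remains only to determine $\cF$ explicitly in terms of $\frf$.

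First I would invoke the equations \eqref{eq:generallocalequations} satisfied by this coframe, whose first component reads $\partial_z\cF = 2\frf$. Because $\cF$ depends only on $(x_u,z)$, this identity shows in particular that $\frf = \frac{1}{2}\partial_z\cF$ is itself a function of $(x_u,z)$ only, so that for each fixed $x_u$ the right-hand side is an ordinary function of $z$. Integrating $\partial_z\cF = 2\frf$ in $z$ with $x_u$ held fixed then yields
\begin{equation*}
\cF(x_u,z) = 2\int \frf\,\dd z + c(x_u),
\end{equation*}
where the integration ``constant'' $c$ is a function of $x_u$ alone, this being the most general antiderivative compatible with $\cF$ being independent of $x_v$. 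Substituting this expression for $e^{\cF}$ into the metric gives exactly the form asserted in the statement, with $\cH$ an unconstrained function of $(x_u,z)$ and $c=c(x_u)$.

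I would then remark that the remaining two relations in \eqref{eq:generallocalequations}, namely $\partial_z\cH - \frf\,\cH + 2\kappa_u e^{\cF}=0$ and $\kappa_z=0$ (together with $\kappa_v=-\frf$), impose no further condition on the coordinate presentation of $g$: they merely fix the components of the auxiliary one-form $\kappa$ in terms of $\cH$ and $\cF$, and are absorbed into the gauge freedom \eqref{eq:kapparelation} of the null coframe. Since all the structural work—the successive changes of coordinate producing the normal form, and the derivation of the first-order system \eqref{eq:generallocalequations}—has already been carried out in the two preceding lemmas, the only point requiring care is the bookkeeping of the dependence of $\frf$ and of the integration constant $c$ on the coordinates; I do not expect any substantive obstacle beyond this.
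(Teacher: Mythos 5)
Your proposal is correct and follows essentially the same route as the paper: integrate $\partial_z\cF = 2\frf$ in $z$ to get $\cF = 2\int\frf\,\dd z + c(x_u)$, and observe that the remaining equations in \eqref{eq:generallocalequations} merely determine the components of $\kappa$ (the paper writes them out explicitly as $\kappa_u = \tfrac{1}{2}e^{-\cF}(\frf\,\cH - \partial_z\cH)$ and $\kappa_z = 0$) without constraining the metric further. The bookkeeping remark that $\frf$ depends only on $(x_u,z)$ is a correct and worthwhile sanity check that the paper leaves implicit.
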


\begin{proof}
The general solution to the first equation in \eqref{eq:generallocalequations} is given by:
\begin{eqnarray*}
\cF = 2\int \frf  \,\dd z + c(x_u)
\end{eqnarray*}

\noindent
for a function $c(x_u)$ of the coordinate $x_u$. Then, since $\kappa$ needs to be chosen as to solve the equations \eqref{eq:exteriorconjugateparallelismtorsion}, the remaining equations are solved by taking:
\begin{eqnarray*}
\kappa_u   = \frac{1}{2} e^{- \cF }(\frf \,\cH - \partial_z \cH ) \, , \qquad \kappa_z  = 0
\end{eqnarray*}

\noindent
and hence we conclude.
\end{proof}

\noindent
Taking the coordinates $(x_u,x_v,z)$ to be the Cartesian coordinates of $\mathbb{R}^3$, the previous theorem allows to construct a plethora of Lorentzian three-manifolds of the form $(\mathbb{R}^3,g)$ that admit skew-torsion parallel spinors. 

\begin{cor}
Lorentzian metrics admitting skew torsion parallel spinors are locally parametrized by one function of two variables and one function of one variable.
\end{cor}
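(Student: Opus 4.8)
The plan is to obtain the corollary as an immediate readout of the local normal form in Proposition~\ref{prop:localskewtorsion}. That proposition produces, around any point, coordinates $(x_u,x_v,z)$ in which
\[
g=\cH\,\dd x_u\otimes \dd x_u+e^{2\int \frf\,\dd z+c(x_u)}\,\dd x_u\odot \dd x_v+\dd z\otimes \dd z,
\]
so the first step is simply to record which functions remain free in this expression. The coefficient of $\dd z\otimes \dd z$ has been normalized to $1$ by the arc-length choice of $z$, the conjugate null coframe \eqref{eq:localconjugate} and the Killing field $u^{\sharp_g}=\partial_{x_v}$ are determined by the metric coefficients, and the torsion $\frf$ is the prescribed datum of the skew-torsion structure; hence, once $\frf$ is fixed, the metric is reconstructed precisely from the pair $(\cH,c)$, where $\cH=\cH(x_u,z)$ and $c=c(x_u)$.

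Next I would check that these two functions are genuinely free and independent. On the one hand, the system \eqref{eq:generallocalequations} imposes no condition on $\cH$: its middle equation merely solves for the coframe coefficient $\kappa_u$ in terms of $\cH$, $\cF$ and $\frf$, while the remaining two equations fix $\frf=\frac{1}{2}\partial_z\cF$ and $\kappa_z=0$ without touching $\cH$. Thus $\cH$ ranges over an arbitrary smooth function of the two variables $(x_u,z)$. On the other hand, $c(x_u)$ is exactly the integration function in the general solution $\cF=2\int \frf\,\dd z+c(x_u)$ of $\partial_z\cF=2\frf$, and every smooth $c$ yields an admissible coframe; conversely the construction shows that every skew-torsion Lorentzian metric arises in this way, so the assignment $(\cH,c)\mapsto g$ surjects onto the local germs. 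This already yields the statement: locally such metrics are parametrized by one free function $\cH$ of two variables and one free function $c$ of one variable.

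The one place I expect to need care --- and the only real obstacle --- is the gauge bookkeeping: one must confirm that the coordinate choices underlying Proposition~\ref{prop:localskewtorsion} (the Killing coordinate adapted to $u^{\sharp_g}$, the Frobenius coordinate $x_u$ with $u=e^{\cF}\,\dd x_u$, and the arc-length coordinate $z$ with $n=\dd z$) exhaust the diffeomorphism freedom compatible with the null-coframe structure, so that $\cH$ and $c$ are counted without redundancy. Concretely, the residual reparametrization $\tilde x_u=\int e^{c}\,\dd x_u$ absorbs $c$ into the cross term and is precisely the one-variable gauge that must not be double counted; verifying that no further one-variable freedom can be traded against $\cH$ pins the count down. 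I expect this to be routine rather than computational, completing the proof.
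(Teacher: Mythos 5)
Your core argument is the paper's: the corollary is stated with no separate proof, as an immediate readout of the normal form in Proposition~\ref{prop:localskewtorsion}, whose free data are exactly $\cH(x_u,z)$ and $c(x_u)$ once the torsion $\frf$ is regarded as prescribed. One warning about your final paragraph, though: the residual reparametrization $\tilde x_u=\int e^{c}\,\dd x_u$ that you propose to ``check'' genuinely does absorb $c$ (it sends $\cH\mapsto \cH e^{-2c}$ and removes $c$ from the cross term), so if you actually carried out that gauge bookkeeping and counted metrics modulo residual coordinate changes you would land on a \emph{single} function of two variables, contradicting the statement. The count the paper intends --- as its later count of NS-NS solutions by two functions of one variable confirms --- is of the arbitrary functions appearing in the general solution of the local normal form, with no quotient by residual diffeomorphisms taken; so that step should be dropped, not deferred as routine. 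Relatedly, your choice to fix $\frf$ is the intended reading and worth stating explicitly, since letting $\frf$ vary (it is a priori an arbitrary function of $(x_u,z)$ with $\partial_{x_v}\frf=0$) would make the cross-term coefficient $e^{2\int\frf\,\dd z+c(x_u)}$ a second arbitrary function of two variables.
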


% % % % % % % % % % % % % % % % % % % % % % % % % % % % % % % % % % % % % % 

\subsection{The lightlike foliation and the dynamics of $u\in\Omega^1(M)$.}

% % % % % % % % % % % % % % % % % % % % % % % % % % % % % % % % % % % % % % 

Let $(M,g,u,\frf)$ skew-torsion parallel spinor. Then, the kernel $u^{\perp_g}\subset TM$ of $u$ defines an integrable distribution in $TM$ whose corresponding foliation we have denoted earlier by $\cX_u \subset M$. This foliation is \emph{lightlike} in the sense that the restriction of the Lorentzian metric $g$ to the tangent spaces of the leaves of $\cX_u$ is degenerate. Equations \eqref{eq:derivativeconjugateparallelismtorsion} immediately imply that the Levi-Civita connection $\nabla^g$ preserves $u^{\perp_g}\subset TM$ and therefore defines by restriction a connection on each of the leaves of $\cX_u$. This connection, which we denote by $D$ momentarily, satisfies:
\begin{eqnarray}
D u^{\sharp_g} = \frf\,n\otimes u^{\sharp_g} \, , \qquad D n =  - \kappa\otimes u^{\sharp_g} 
\end{eqnarray} 

\noindent
where we are using that $u^{\perp_g}\subset TM$ is spanned by $u^{\sharp_g}$ and $n^{\sharp_g}$. A direct computation gives the following formulas for the curvature tensor of $D$:
\begin{eqnarray*}
& R^{D}_{u n} u^{\sharp_g} = D_{u^{\sharp_g}} D_{n^{\sharp_g}} u^{\sharp_g} - D_{n^{\sharp_g}} D_{u^{\sharp_g}} u^{\sharp_g} - D_{[u^{\sharp_g}, n^{\sharp_g}]} u^{\sharp_g}  = 0  \\
& R^{D}_{u n} n^{\sharp_g} = D_{u^{\sharp_g}} D_{n^{\sharp_g}} n^{\sharp_g} - D_{n^{\sharp_g}} D_{u^{\sharp_g}} n^{\sharp_g} - D_{[u^{\sharp_g}, n^{\sharp_g}]} n^{\sharp_g}  = -(\dd\frf(n) + \frf^2) u^{\sharp_g}  
\end{eqnarray*}

\noindent
What we find interesting about this expression is that it does not depend on $\kappa\in \Omega^1(M)$, that is, it does not depend on the one-form that determines the specific form of the differential system that each null coframe $(u,v,n)$ satisfies, but only on the null coframe itself and on the underlying fixed torsion. This allows for a simple criteria that guarantees that the leaves of $\cX_u$ carry a flat connection and therefore an affine structure.

\begin{prop}
Let  $(M,g,u,\frf)$ be a skew-torsion Lorenztian three-manifold such that:
\begin{equation*}
\dd\frf(n) + \frf^2 = 0
\end{equation*}

\noindent
for any, and hence for all, null coframes associated to $u\in \Omega^1(M)$. Then the foliation $\cX_u \subset M$ associated to $u\in \Omega^1(M)$ carries an affine structure induced by the Levi-Civita connection on $\nabla^g$.
\end{prop}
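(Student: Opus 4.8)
The plan is to read the result off from the curvature of the connection that $\nabla^g$ induces on the leaves of $\cX_u$, combined with the curvature formula established immediately above the statement. The three steps are: show that $\nabla^g$ restricts to a torsion-free connection $D$ on each leaf; use the hypothesis to conclude that $D$ is flat; and then invoke the classical equivalence between flat torsion-free connections and affine structures.

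First I would verify that $\nabla^g$ preserves tangency to the leaves, so that $D:=\nabla^g|_{TL}$ is well defined on each leaf $L$. By \eqref{eq:derivativeconjugateparallelismtorsion} the tangent distribution $u^{\perp_g}$ is spanned by $u^{\sharp_g}$ and $n^{\sharp_g}$, and since $\nabla^g u=\frf\,n\wedge u$, for $X,Y$ tangent to $L$ one gets $u(\nabla^g_X Y)=-(\nabla^g_X u)(Y)=-\frf\,(n\wedge u)(X,Y)=0$, because $u(X)=u(Y)=0$ there. Hence $\nabla^g_X Y$ stays tangent to $L$. Torsion-freeness of $D$ is then immediate: for $X,Y$ tangent to $L$ we have $D_XY-D_YX=\nabla^g_XY-\nabla^g_YX=[X,Y]$, and $[X,Y]$ is tangent to $L$ by integrability of $\cX_u$.

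Next comes flatness. Since $L$ is two-dimensional, the curvature of $D$ is determined by the single endomorphism $R^D(u^{\sharp_g},n^{\sharp_g})\in\End(TL)$. The computation carried out just before the statement gives $R^D(u^{\sharp_g},n^{\sharp_g})u^{\sharp_g}=0$ and $R^D(u^{\sharp_g},n^{\sharp_g})n^{\sharp_g}=-(\dd\frf(n)+\frf^2)\,u^{\sharp_g}$, so the hypothesis $\dd\frf(n)+\frf^2=0$ makes $R^D(u^{\sharp_g},n^{\sharp_g})$ vanish on the frame $(u^{\sharp_g},n^{\sharp_g})$, whence $R^D\equiv0$. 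It is worth recording that $\dd\frf(n)+\frf^2$ does not depend on the chosen coframe: under $n\mapsto n-Fu$ one has $\dd\frf(n)\mapsto\dd\frf(n)-F\,\dd\frf(u)$, and $\dd\frf(u)=0$ follows from the first integrability condition $\dd\frf\wedge u\wedge n=0$; this is what justifies the phrase ``for any, and hence for all'' in the statement.

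Finally, each leaf $L$ is equipped with a flat torsion-free connection $D$ induced by $\nabla^g$, and by the classical correspondence between flat torsion-free connections and affine structures---the local $D$-parallel frames are coordinate frames whose transition functions are affine, so one obtains an $(\mathrm{Aff}(2),\mathbb{R}^2)$-atlas---this is precisely an affine structure on the leaves, recovering one of the cases considered in \cite{HanounahMehidi}. The only genuinely delicate point is the first step, the well-definedness of $D$, i.e. that $\nabla^g$ preserves the tangent distribution of the foliation along leaf directions; everything else is a repackaging of the curvature formula already computed, together with the standard flat-implies-affine theorem.
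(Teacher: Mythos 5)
Your proposal is correct and follows essentially the same route as the paper: the result is read off from the curvature computation of the restricted connection $D$ on the leaves, which the paper carries out immediately before the statement, with the hypothesis $\dd\frf(n)+\frf^2=0$ killing the only nonzero curvature component. The extra checks you supply (that $\nabla^g$ preserves $u^{\perp_g}$ along leaf directions, torsion-freeness of $D$, and coframe-independence of the condition via $\dd\frf(u)=0$) are details the paper leaves implicit, and they are all verified correctly.
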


\noindent
The condition given in the previous proposition brings us to one of the cases considered by the authors of \cite{HanounahMehidi} in their study of completeness of foliated structures of compact Lorentzian manifolds, see also \cite{LeistneSchliebnerr,MehidiZeghib}, and supposes an excellent starting point to study the geodesic completeness of compact skew-torsion Lorentzian three-manifolds. Since the latter are in particular equipped with  a nowhere vanishing isotropic Killing vector field, their completeness depends on the dynamics of $u$: if the dynamics of $u$ is non-equicontinuous then $(M,g)$ is geodesically complete, as proven by Zeghib in \cite{Zeghib}, where all such non-equicontinuous flows are classified. On the other hand, the geodesic completeness of a compact Lorentzian manifold equipped with a lightlike Killing vector field with equicontinuous flow seems to be a completely open problem \cite[Page 3]{HanounahMehidi}. 

\begin{prop}
	\label{prop:equicontinuous}
Let $(M,g,\frf,u)$ be a skew-torsion Lorentzian three-manifold. Then, the flow of $u\in \Omega^1(M)$ is equicontinuous if and only if there exists a null coframe $(u,v,n)\in \cP_u$ and a function $F$ satisfying the following differential system:
\begin{equation}
	\label{eq:conditionequi}
	\kappa(u) + \dd F(u) + \frf = 0
\end{equation}

\noindent
where $\kappa\in\Omega^1(M)$ is the one-form respect to which $(u,v,n)$ satisfies \eqref{eq:exteriorconjugateparallelismtorsion}.
\end{prop}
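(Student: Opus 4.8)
The plan is to translate equicontinuity of the flow $\phi_t$ of the isotropic Killing field $u^{\sharp_g}$ into a single transport (cohomological) equation along the flow, by computing the explicit action of $\phi_t$ on a null coframe. First I would apply Cartan's formula $\cL_{u^{\sharp_g}} = \iota_{u^{\sharp_g}}\dd + \dd\,\iota_{u^{\sharp_g}}$ together with the exterior system \eqref{eq:exteriorconjugateparallelismtorsion} and the null relations $u(u^{\sharp_g}) = n(u^{\sharp_g}) = 0$, $v(u^{\sharp_g}) = 1$. Setting $\mu := \frf + \kappa(u)$, a direct computation gives
\begin{equation*}
\cL_{u^{\sharp_g}} u = 0\,,\qquad \cL_{u^{\sharp_g}} n = -\mu\, u\,,\qquad \cL_{u^{\sharp_g}} v = \mu\, n\,.
\end{equation*}
Integrating these relations along the flow yields the parabolic (unipotent) action
\begin{equation*}
\phi_t^{\ast} u = u\,,\qquad \phi_t^{\ast} n = n - A(t)\,u\,,\qquad \phi_t^{\ast} v = v + A(t)\,n - \tfrac{1}{2} A(t)^2\, u\,,
\end{equation*}
where $A(t)(m) := \int_0^t \mu(\phi_s(m))\,\dd s$ is the ergodic integral of $\mu$ along the orbit through $m$.

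Next I would record how $\mu$ changes under the torsor action of $C^{\infty}(M)$ on $\cP_u$. Contracting the transformation rule \eqref{eq:kapparelation} with $u^{\sharp_g}$ and using $n(u^{\sharp_g}) = u(u^{\sharp_g}) = 0$ gives $\kappa'(u) = \kappa(u) + \dd F(u)$, hence $\mu' = \mu + u^{\sharp_g}(F)$. Therefore condition \eqref{eq:conditionequi}, namely $\kappa(u) + \dd F(u) + \frf = 0$, is precisely the statement $\mu' = 0$ for the $F$-shifted coframe $(u,v',n')$; by the Lie-derivative formulas above this is in turn equivalent to that coframe being $\phi_t$-invariant. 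This reduces the proposition to the assertion that the flow is equicontinuous if and only if the transport equation $u^{\sharp_g}(F) = -\mu$ admits a solution $F \in C^{\infty}(M)$.

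The direction $(\Leftarrow)$ is then immediate: if \eqref{eq:conditionequi} holds, then $(u,v',n')$ is $\phi_t$-invariant, so the Riemannian metric $g_R := u\otimes u + v'\otimes v' + n'\otimes n'$ is preserved by the flow; on the compact manifold $M$ the isometry group $\Isom(M,g_R)$ is a compact Lie group, whence $\{\phi_t\}$ is precompact and the flow is equicontinuous. For $(\Rightarrow)$ I would first observe that, with respect to the fixed background Riemannian metric built from $(u,v,n)$, the operator norm of $\dd\phi_t$ is comparable to $1 + |A(t)|$, so precompactness of $\{\dd\phi_t\}$ forces $\sup_{m,t}|A(t)(m)| < \infty$.

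The main obstacle is the last step: passing from uniform boundedness of the ergodic integrals of $\mu$ to solvability of $u^{\sharp_g}(F) = -\mu$. This is exactly the content of the Gottschalk--Hedlund theorem, whose recurrence hypothesis is supplied here by equicontinuity, since the orbit closures of $\phi_t$ are the orbits of the compact abelian group $\overline{\{\phi_t\}} \cong \mathbb{T}^k$ acting by isometries; it produces a continuous primitive $F$. Upgrading $F$ from continuous to smooth is the delicate point, which I would handle by Fourier-analysing the equation along the torus action $\overline{\{\phi_t\}}$, where the uniform boundedness of $A(t)$ rules out the only small-divisor/resonance obstruction to smooth solvability. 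Combining this with the $(\Leftarrow)$ analysis completes the equivalence.
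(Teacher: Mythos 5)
Your computation of the Lie derivatives $\cL_{u^{\sharp_g}} u = 0$, $\cL_{u^{\sharp_g}} v = (\frf+\kappa(u))\,n$, $\cL_{u^{\sharp_g}} n = -(\frf+\kappa(u))\,u$, the transformation rule $\kappa'(u)=\kappa(u)+\dd F(u)$ under the torsor action, and the resulting $(\Leftarrow)$ direction (an invariant null coframe yields an invariant Riemannian metric, hence equicontinuity) coincide with what the paper does. The divergence is in the $(\Rightarrow)$ direction: the paper simply invokes \cite[Lemma 7.4]{HanounahMehidi}, which asserts that equicontinuity of the flow of $u$ is \emph{equivalent} to the existence of a $u$-invariant null coframe in $\cP_u$; combined with the torsor computation this immediately gives a smooth $F$ solving \eqref{eq:conditionequi}. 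You instead attempt to rederive this implication from scratch via bounded ergodic integrals and Gottschalk--Hedlund, and that is where your argument has genuine gaps.

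Concretely, two steps would fail as written. First, Gottschalk--Hedlund requires a minimal system (or at least a carefully controlled recurrence structure); the flow of $u^{\sharp_g}$ on $M$ need not be minimal, and knowing that each orbit closure is a torus only produces a continuous primitive on each minimal set separately --- you have no argument that these glue to a single continuous, let alone smooth, function $F$ on all of $M$ with control in the directions transverse to the orbits. Second, and more seriously, the passage from a continuous solution to a smooth one is not a technicality that boundedness of $A(t)$ resolves: for cohomological equations over torus flows with Liouville frequencies there exist smooth data $\mu$ with zero ergodic obstruction (hence bounded Birkhoff integrals and a continuous transfer function by Gottschalk--Hedlund) whose transfer function is \emph{not} smooth; bounded ergodic integrals characterize continuous solvability, not smooth solvability, so the claim that they ``rule out the only small-divisor obstruction'' is false in general. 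Since $\cP_u$ is a torsor over $C^{\infty}(M)$, the statement genuinely requires $F\in C^{\infty}(M)$, so this is not a cosmetic issue. A minor additional point: your $(\Leftarrow)$ argument and the $C^1$-norm estimate on $\dd\phi_t$ both implicitly assume $M$ compact, which the proposition does not. The efficient repair is to do what the paper does and quote the equivalence of equicontinuity with the existence of an invariant (smooth) null coframe from \cite[Lemma 7.4]{HanounahMehidi}, after which your torsor computation finishes the proof.
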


\begin{proof}
By \cite[Lemma 7.4]{HanounahMehidi}, the flow of $u$ is equicontinuous if and only if there exists a null coframe $(u,v,n)\in \cP_u$ that is preserved by $u$. We compute:
\begin{equation*}
\cL_u u = 0\, , \qquad \cL_u v = (\frf + \kappa(u)) n\, , \qquad \cL_u n = - (\frf + \kappa(u)) u
\end{equation*}

\noindent
Every other null coframe satisfies an analogous equation with respect to $\kappa^{\prime}\in \Omega^1(M)$ as prescribed by Equation \eqref{eq:kapparelation}. Choose a new null coframe of the form:
\begin{equation*}
(u,v^{\prime},n^{\prime})	= F\cdot (u,v,n) = (u, v -\frac{F^2}{2} u + F n , n - F u)
\end{equation*}

\noindent
where $F\in C^{\infty}(M)$ satisfies the differential equation:
\begin{equation*}
\kappa(u) + \dd F(u) + \frf = 0
\end{equation*}

\noindent
Then, by Equation \eqref{eq:kapparelation} it follows that $\kappa^{\prime}(u) = -\frf$ and hence $\cL_u u = 0$, $\cL_u v^{\prime} = 0$ and $\cL_u n^{\prime} = 0$, implying that the flow of $u$ preserves a Riemannian metric and is therefore equicontinous. 
\end{proof}

\noindent
Solutions to Equation \eqref{eq:conditionequi} may be obstructed. Consider first the following \emph{weaker} form of Equation \eqref{eq:conditionequi}:
\begin{equation*}
\alpha(u) = -(\kappa(u)   + \frf)
\end{equation*}

\noindent
for a one-form $\alpha\in \Omega^1(M)$. Solutions to this equation always exist: simply set $\alpha = -(\kappa(u)   + \frf) v$. It follows that every solution is of the form:
\begin{equation*}
\alpha = -(\kappa(u)   + \frf) v + \rho_1 u + \rho_2 n
\end{equation*}

\noindent
for functions $\rho_1 , \rho_2 \in C^{\infty}(M)$. Hence, the space of solutions is an affine space $\mathbb{A}_{u,\kappa}$ modelled on the sections of the distribution $u^{\sharp_g}\subset TM$ given by the kernel of $u\in \Omega^1(M)$. 

\begin{cor}
Equation \eqref{eq:conditionequi} admits a solution if and only if $\mathbb{A}_{u,\kappa}$ contains an exact one-form. 
\end{cor}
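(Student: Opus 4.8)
The plan is to prove this essentially by unwinding the definitions, since the corollary is a reformulation of Equation \eqref{eq:conditionequi} in terms of the affine space $\mathbb{A}_{u,\kappa}$ introduced just above it, and no new geometric input beyond what has already been established is required. First I would recall that, by construction, $\mathbb{A}_{u,\kappa}$ is exactly the (nonempty, affine) set of one-forms $\alpha\in\Omega^1(M)$ solving the weaker linear equation $\alpha(u)=-(\kappa(u)+\frf)$, every element of which has the form $\alpha=-(\kappa(u)+\frf)\,v+\rho_1 u+\rho_2 n$ for functions $\rho_1,\rho_2\in C^\infty(M)$. The decisive observation is then that a function $F\in C^\infty(M)$ solves Equation \eqref{eq:conditionequi} if and only if its differential lies in this solution space: indeed \eqref{eq:conditionequi} reads $\dd F(u)=-(\kappa(u)+\frf)$, which is precisely the condition $\dd F\in\mathbb{A}_{u,\kappa}$. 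As $\dd F$ is by definition an exact one-form, the existence of such an $F$ is equivalent to $\mathbb{A}_{u,\kappa}$ containing an exact one-form.

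Concretely I would then just verify the two implications. For the forward direction I take a solution $F$ of \eqref{eq:conditionequi}; evaluating $\dd F$ on $u^{\sharp_g}$ gives $\dd F(u)=-(\kappa(u)+\frf)$, so $\dd F$ is an exact element of $\mathbb{A}_{u,\kappa}$, and in particular it must be of the displayed form $-(\kappa(u)+\frf)\,v+\rho_1 u+\rho_2 n$ with $\rho_1=\dd F(v^{\sharp_g})$, $\rho_2=\dd F(n^{\sharp_g})$. For the converse I take an exact one-form $\dd F\in\mathbb{A}_{u,\kappa}$ and evaluate on $u^{\sharp_g}$, recovering $\kappa(u)+\dd F(u)+\frf=0$, which is Equation \eqref{eq:conditionequi}.

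The honest remark is that there is no genuine obstacle here: the substance of the statement was already absorbed into the preceding identification of the solution set of $\alpha(u)=-(\kappa(u)+\frf)$ as the affine space $\mathbb{A}_{u,\kappa}$. The only point worth isolating is conceptual rather than technical, namely that replacing the pointwise linear condition by the requirement that some representative be exact is exactly what converts the equicontinuity criterion of Proposition \ref{prop:equicontinuous} into a cohomological obstruction on $u$ and $\kappa$. If one wanted to sharpen this into a more directly checkable criterion, the natural next step would be to intersect $\mathbb{A}_{u,\kappa}$ with the closed one-forms and demand the vanishing of the resulting de Rham class; but the corollary as stated follows at once from the definitions.
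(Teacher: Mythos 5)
Your proof is correct and coincides with the paper's (implicit) argument: the corollary is stated without proof precisely because, as you observe, $F$ solves Equation \eqref{eq:conditionequi} if and only if $\dd F(u)=-(\kappa(u)+\frf)$, i.e.\ if and only if $\dd F$ is an exact element of $\mathbb{A}_{u,\kappa}$. Your identification of the coefficients $\rho_1=\dd F(v^{\sharp_g})$, $\rho_2=\dd F(n^{\sharp_g})$ is also consistent with the coframe conventions $g=u\odot v+n\otimes n$.
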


\noindent
Hence, by the previous corollary it is natural to first impose $\alpha = -(\kappa(u)   + \frf) v + \rho_1 u + \rho_2 n$ to be closed for appropriately chosen functions $\rho_1 , \rho_2 \in C^{\infty}(M)$, and then evaluate if this closed one-form is exact, which would lead to a cohomological obstruction. In particular, we obtain the following criteria for the geodesic completeness of a compact skew-torsion Lorentzian three-manifold $(M,g,u\frf)$.

\begin{cor}
If $\mathbb{A}_{u,\kappa}$ does not contain an exact one-form then $(M,g)$ is geodesically complete.
\end{cor}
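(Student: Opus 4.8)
The plan is to assemble this as a direct consequence of the preceding three results, chaining their equivalences. The statement is implicitly about a \emph{compact} skew-torsion Lorentzian three-manifold, since geodesic completeness is the issue and Zeghib's theorem requires compactness; I would state this hypothesis explicitly at the outset. The logical skeleton is a contrapositive argument running through the characterizations already established.

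First I would invoke the immediately preceding corollary, which asserts that Equation \eqref{eq:conditionequi} admits a solution if and only if the affine space $\mathbb{A}_{u,\kappa}$ contains an exact one-form. Taking the contrapositive, the hypothesis that $\mathbb{A}_{u,\kappa}$ contains no exact one-form forces Equation \eqref{eq:conditionequi} to have no solution. Next I would apply Proposition \ref{prop:equicontinuous}, which states that the flow of $u$ is equicontinuous precisely when there exists a null coframe $(u,v,n)\in\cP_u$ and a function $F$ solving \eqref{eq:conditionequi}; since no such data exist, the flow of $u^{\sharp_g}$ is \emph{not} equicontinuous.

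The final step is to recall that by Proposition \ref{prop:torsionparallel} and Corollary \ref{cor:torsionparalleldiff}, the field $u^{\sharp_g}\in\mathfrak{X}(M)$ is a nowhere-vanishing isotropic Killing vector field on the compact Lorentzian manifold $(M,g)$. Zeghib's theorem \cite{Zeghib} guarantees that a compact Lorentzian manifold carrying such a vector field whose flow is non-equicontinuous is geodesically complete. Applying this result yields the completeness of $(M,g)$, completing the argument.

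Since every link in this chain has been proved earlier in the excerpt, there is no genuine obstacle here; the only point requiring care is the invocation of Zeghib's classification, whose hypotheses (compactness, and the existence of a nowhere-vanishing lightlike Killing field) must be verified to hold in the present setting—which they do by the structural results on skew-torsion Lorentzian three-manifolds. I would therefore keep the proof to a few lines, merely spelling out the contrapositive passage through the preceding corollary and Proposition \ref{prop:equicontinuous} before citing \cite{Zeghib}.
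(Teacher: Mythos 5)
Your proposal is correct and follows exactly the chain of reasoning the paper intends: the preceding corollary gives that no exact form in $\mathbb{A}_{u,\kappa}$ means Equation \eqref{eq:conditionequi} has no solution, Proposition \ref{prop:equicontinuous} then forces the flow of the nowhere-vanishing isotropic Killing field $u^{\sharp_g}$ to be non-equicontinuous, and Zeghib's theorem \cite{Zeghib} yields completeness of the compact $(M,g)$. Your explicit flagging of the implicit compactness hypothesis is appropriate; the paper leaves this corollary without a written proof precisely because it is this immediate concatenation.
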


\noindent
To end this subsection we recall now the following result, which, as communicated by Hanounah and Mehidi and holds more generally for compact Lorentzian three-manifolds equipped with a nowhere vanishing null Killing vector.

\begin{prop}
\label{prop:completeness}
Let $(M,g,u,\frf)$ be a compact skew-torsion Lorentzian three-manifold that is not diffeomorphic to torus bundle over $S^1$. Then $(M,g)$ is geodesically complete. 
\end{prop}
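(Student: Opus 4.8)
The plan is to run a dichotomy on the dynamics of the Killing field and to use the extra topological hypothesis only to rule out the delicate case. First I would record the two structural facts already at our disposal: by Corollary \ref{cor:torsionparalleldiff} the Dirac current $u^{\sharp_g}\in\mathfrak{X}(M)$ is a nowhere vanishing isotropic Killing vector field, so that $(M,g)$ is a compact Lorentzian three-manifold carrying a lightlike Killing field; and by Proposition \ref{prop:equicontinuous} the flow of $u^{\sharp_g}$ is equicontinuous precisely when it preserves an auxiliary Riemannian metric, equivalently when a null coframe can be chosen invariant under the flow. Geodesic completeness will then be decided according to whether the flow of $u^{\sharp_g}$ is equicontinuous or not.

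In the non-equicontinuous case there is nothing new to prove. The flows of lightlike Killing fields on compact Lorentzian manifolds with non-equicontinuous dynamics are classified by Zeghib \cite{Zeghib}, and every manifold on which such a flow occurs is geodesically complete. I would simply invoke this result, which yields completeness immediately under the assumption that the flow of $u^{\sharp_g}$ is non-equicontinuous.

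The substance of the argument is therefore the equicontinuous case, where I would show that the topological hypothesis is violated, so that this case cannot occur under the standing assumptions. Assuming the flow equicontinuous, Proposition \ref{prop:equicontinuous} provides a Riemannian metric $h$ preserved by the flow; consequently the closure $G:=\overline{\{\phi_t\}}\subset \Isom(M,h)\cap\Isom(M,g)$ is a compact connected abelian Lie group, i.e. a torus, acting on $M$ by isometries of both metrics. Since $u^{\sharp_g}$ generates a dense one-parameter subgroup of $G$ and is nowhere vanishing, every $G$-orbit is a torus of positive dimension everywhere tangent to $u^{\sharp_g}$, and the orbit closures foliate $M$ by tori. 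To upgrade this foliation to a fibration and identify the total space, I would appeal to the structural analysis of compact Lorentzian three-manifolds admitting an equicontinuous lightlike Killing field carried out in \cite{HanounahMehidi} (see also \cite{MehidiZeghib}): combined with the constraints that $u^{\sharp_g}$ is null, $G$-invariant, and tangent to the codimension-one foliation $\cX_u=\ker u$ (note $u(u^{\sharp_g})=0$), this forces $M$ to be a torus bundle over $S^1$, contradicting the hypothesis. Hence the flow of $u^{\sharp_g}$ cannot be equicontinuous.

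Combining the two cases, the flow of $u^{\sharp_g}$ is necessarily non-equicontinuous, whence $(M,g)$ is geodesically complete by \cite{Zeghib}. The main obstacle is the equicontinuous case: the genuinely hard input is the structure theorem identifying a compact Lorentzian three-manifold with an equicontinuous lightlike Killing field as a torus bundle over $S^1$, which rests on the classification of the relevant isometric torus actions and is precisely the point at which I would rely on \cite{HanounahMehidi}; the remaining steps are formal once that classification is granted.
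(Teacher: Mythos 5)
Your overall strategy coincides with the paper's: the same dichotomy on the equicontinuity of the flow of $u^{\sharp_g}$, with the non-equicontinuous branch disposed of by Zeghib's theorem \cite{Zeghib} exactly as in the paper. The divergence, and the problem, lies in the equicontinuous branch. There you assert that the closure of the flow in the isometry group of an invariant Riemannian metric is a torus whose orbit closures ``foliate $M$ by tori'', and that a structure theorem from \cite{HanounahMehidi} then ``forces $M$ to be a torus bundle over $S^1$''. Neither step is actually carried out: the orbits of a torus action need not have constant dimension, so they need not form a foliation, and no specific statement of \cite{HanounahMehidi} is identified that delivers the torus-bundle conclusion. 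Since you yourself flag this as ``the genuinely hard input'', the decisive point of the proof is left unproved rather than reduced to a stated theorem.

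The paper closes this gap concretely and without appealing to \cite{HanounahMehidi} for the structure result. By Proposition \ref{prop:equicontinuous}, equicontinuity yields a null coframe $(u,v,n)\in\cP_u$ whose associated one-form $\kappa$ satisfies $\kappa(u)=-\frf$, whence $[u^{\sharp_g},v^{\sharp_g}]=(\frf+\kappa(u))\,n^{\sharp_g}=0$ and, by \eqref{eq:exteriorconjugateparallelismtorsion}, the kernel of $n$ is integrable with the projected Levi-Civita connection flat, torsion-free and preserving the Riemannian metric $u\otimes u+v\otimes v$ on its leaves. Thus $(u^{\sharp_g},v^{\sharp_g})$ generates a locally free $\mathbb{R}^2$-action whose orbits are precisely these flat leaves, and the classification of rank-two compact orientable three-manifolds \cite{FoliationsArraut,FoliationsChatelet,FoliationsRosenberg} identifies $M$ as a torus bundle over $S^1$, giving the contradiction. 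If you wish to keep your torus-action route, you would still have to supply the analogue of this last classification step; as written, the equicontinuous case rests on an unverified appeal to authority.
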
 
 
\begin{proof}
If the flow of $u$ is not equicontinuous then $(M,g)$ is geodesically complete by the results of \cite{Zeghib}. Assume then that the flow of $u$ is equicontinuous. Then, by Proposition \ref{prop:equicontinuous}, there exists a null coframe $(u,v,n)\in \cP_u$ satisfying:
\begin{eqnarray*}
\frac{1}{2} \dd u =\frf  \ast_g u = \frf\,n\wedge u\, , \qquad  \dd v =  \kappa(v) u \wedge n\, , \qquad \dd n = \kappa(n) u\wedge n 
\end{eqnarray*}

\noindent
In particular, the distribution defined by the kernel of $n\in\Omega^1(M)$ is integrable. Denote by $\cX_n \subset  M$ a given leaf of the corresponding foliation, which has vanishing Godbillon-Vey invariant since:
\begin{eqnarray*}
\kappa(n) u \wedge \dd(\kappa(n) u) = \kappa(n)^2 u\wedge \dd u = 0
\end{eqnarray*}

\noindent
The orthogonal projection of the Levi-Civita connection $\nabla^g$ to the distribution determined by the kernel of $n$ defines a connection $D^n$ on $T\cX_n$ that satisfies:
\begin{eqnarray*}
	D^n u^{\sharp_g} = 0\, ,\qquad D^n v^{\sharp_g} = 0\
\end{eqnarray*}

\noindent
and has vanishing torsion. Hence, $D^n$ defines a flat connection  with vanishing torsion on $\cX_n$ that in addition preserves the Riemannian metric $q$ defined on $T\cX_n$ by $q := u\otimes u + v\otimes v$. This implies that $M$ admits a codimension-one foliation with flat leaves, that can only be planes, cylinders or tori. Furthermore, we have:
\begin{eqnarray*}
[u^{\sharp_g},v^{\sharp_g}] = \nabla^g_{u^{\sharp_g}}v^{\sharp_g} - \nabla^g_{v^{\sharp_g}}u^{\sharp_g} = (\frf + \kappa(u)) n^{\sharp_g} = 0
\end{eqnarray*}

\noindent
and therefore, $(u^{\sharp_g},v^{\sharp_g})$ defines a non-singular action of $\mathbb{R}^2$ on $M$ whose leaves are precisely the leaves $\cX_n\subset M$ of the foliation defined by the kernel of $n$. Hence, a classical result in the theory of rank-two compact three-manifolds implies that $M$ is diffeomorphic to a torus bundle over $S^1$ and hence we conclude \cite{FoliationsArraut,FoliationsChatelet,FoliationsRosenberg} given by a suspension of $T^2$ over $\mathbb{R}$ by an element in $\mathrm{Sl}(2,\mathbb{Z})$.
\end{proof}

\noindent
The class of examples in \ref{ep:exampleGodbillonVey} gives numerous instances of compact skew-torsion Lorentzian three-manifolds that are not diffeomorphic to a torus bundle over $S^1$ and that are geodesically complete, as it can be independently verified by noticing that they are compact quotients of AdS$_3$, which is a geodesically complete Lorentzian three-manifold. As a corollary to the previous theorem we obtain the following result. Note that, arguing as in the proof of the previous proposition, it follows that if $(M,g,u,\frf)$ is a compact skew-torsion Lorentzian three-manifold with equicontinuous $u$ then $M$ is a torus bundle over $S^1$.

% % % % % % % % % % % % % % % % % % % % % % % % % % % % % % % % % % % % % % 
% % % % % % % % % % % % % % % % % % % % % % % % % % % % % % % % % % % % % %

\section{Supersymmetric solutions in NS-NS supergravity}
\label{sec:susygerbes}

% % % % % % % % % % % % % % % % % % % % % % % % % % % % % % % % % % % % % % 
% % % % % % % % % % % % % % % % % % % % % % % % % % % % % % % % % % % % % %

In this section we consider the type of skew-torsion parallel spinors that occur as supersymmetric configurations and solutions of NS-NS supergravity. This requires introducing the notion of a \emph{abelian gerbe}, which we proceed to briefly review together with other geometric preliminaries needed to establish NS-NS supergravity together with its three-dimensional Killing spinor equations. 

% % % % % % % % % % % % % % % % % % % % % % % % % % % % % % % % % % % % % % 

\subsection{The NS-NS system}
\label{subsec:NSNSsolutions}

% % % % % % % % % % % % % % % % % % % % % % % % % % % % % % % % % % % % % %

In the following $\cC$ will denote a bundle gerbe $\cC := (\cP,\cA,Y)$ \cite{Murray,BehrendXu}, with underlying submersion $Y\to M$, equipped with a fixed connective structure $\cA$ defined on a fixed three-dimensional manifold $M$, and $\cX$ will denote a principal $\mathbb{Z}$ defined on $M$. Given a curving $b\in \Omega^2(Y)$ on $\cC$ we denote its curvature by $H_b\in \Omega^3(M)$. A given equivariant function $\phi \in \cC^{\infty}(\cX)$ defined on $\cX$ does not necessarily descend to $M$. Nonetheless, its exterior derivative does descend to $M$ and defines the \emph{curvature} $\varphi_{\phi}\in \Omega^1(M)$ of $\phi \in \cC^{\infty}(\cX)$. A pair $(\cC,\cX)$ determines a unique NS-NS supergravity on $M$, as prescribed in the following definition.

\begin{definition}
\label{label:NSNSsystem}
The \emph{NS-NS supergravity system}, or \emph{NS-NS system} for short, determined by $(\cC,\cX)$ on $M$ is the following differential system \cite{Ortin,Tomasiello}:
\begin{equation}
\label{eq:NSNSsystem}
\Ric^g + \nabla^g \varphi_{\phi} - \frac{1}{2} H_b \circ_g H_b = 0\, , \qquad \nabla^{g\ast}H_b + \varphi_{\phi}\lrcorner_g H_b = 0\, , \qquad    \nabla^{g\ast}\varphi_{\phi} +  \vert \varphi_{\phi}\vert_g^2 = \vert H_b\vert^2_g 
\end{equation}
	
\noindent
for triples $(g,b,\phi)$ consisting of a Lorentzian metric $g$ on $M$, a curving $b\in \Omega^2(Y)$ on $\cC$, and an equivariant function $\phi\in C^{\infty}(\cX)$, where $H_b\in \Omega^3(M)$ and $\varphi_{\phi}\in \Omega^1(M)$ are the curvatures of $b$ and $\phi$, respectively.
\end{definition}

\begin{remark}
Solutions $(g,b,\phi)$ to equations \eqref{eq:NSNSsystem} are \emph{NS-NS solutions}. The first equation in \eqref{eq:NSNSsystem} is the so-called \emph{Einstein equation}, whereas the second equation in \eqref{eq:NSNSsystem} is called the \emph{Maxwell equation} and the third equation in \eqref{eq:NSNSsystem} is referred to as the \emph{dilaton equation} in the literature. In this context, an equivariant function on $\cX$ corresponds to the \emph{dilaton} of the theory, and therefore we will refer to equivariant functions on $\cX$ as \emph{dilatons}.
\end{remark}

\noindent
Given a NS-NS solution $(g,b,\phi)$, we will refer to the cohomology class $\sigma = [\varphi_{\phi}] \in H^1(M,\mathbb{R})$ determined by $\phi\in \cC^{\infty}(\cX)$ as the \emph{Lee class} of $(g,b,\phi)$. Clearly, different dilatons on $\cX$ define the same Lee class through their curvature. Given $(\cC,\cX)$, we will denote by $\Conf(\cC,\cX)$ the \emph{configuration space} of the NS-NS system defined on $(\cC,\cX)$, namely the set of triples $(g,b,\phi)$ consisting of a Lorentzian metric $g$ on $M$, a curving $b$ on $\cC$ and an equivariant function $\phi$ on $\cX$. Similarly, we will denote by $\Sol(\cC,\cX)\subset \Conf(\cC,\cX)$ the solution set of NS-NS supergravity on $(\cC,\cX)$. The notion of NS-NS system that we have introduced in Definition \ref{label:NSNSsystem} is valid in any dimension. Since in our case $M$ is three-dimensional, we can simplify the NS-NS system accordingly. Given a curving $b\in \Omega^2(Y)$ with curvature $H_b \in \Omega^3(M)$ we set $H_b = - 2 \frf_b \nu_g$ for a unique function $\frf_b \in C^{\infty}(M)$ determined by:
\begin{equation*}
\frf_b = \frac{1}{2}\ast_g H_b
\end{equation*}

\noindent
A quick computation gives the following formulae:
\begin{equation*}
H_b\circ_g H_b = -4\frf_b^2\, g\, , \qquad \vert H_b\vert^2_g  = - 4\frf_b^2\, , \qquad \varphi_{\phi}\lrcorner_g H_b = -2\frf_b \ast_g \varphi_{\phi}\, , \qquad \nabla^{g\ast} H_b = 2 \ast_g \dd\frf_b 
\end{equation*}

\noindent
Hence, the three-dimensional NS-NS system is equivalent to the following system of equations:
\begin{equation}
\label{eq:NSNSsystem3d}
\Ric^g + \nabla^g \varphi_{\phi} + 2 \frf_b^2 g= 0\, , \qquad \dd \frf_b = \frf_b \varphi_{\phi}\, , \qquad    \nabla^{g\ast}\varphi_{\phi} +  \vert \varphi_{\phi}\vert_g^2 + 4\frf_b^2 = 0
\end{equation}

\noindent
for triples $(g,b,\phi)\in \Conf(\cC,\cX)$. 

\begin{remark}
Left-invariant solutions on three-dimensional Lie groups to the first two equations in the NS-NS system \eqref{eq:NSNSsystem3d} have been studied in \cite{CortesKrusche} in the context of generalized geometry, since they appear naturally as the conditions for a generalized metric to be generalized Ricci flat \cite{FernandezStreets}. Hence, if also satisfying the dilaton equation in \eqref{eq:NSNSsystem3d}, these solutions give examples of NS-NS supergravity solutions.
\end{remark}

\begin{definition}
A triple $(g,b,\phi)\in \Conf(\cC,\cX)$ is \emph{flux-less} if $b$ is flat, namely if $\frf_b=0$ identically on $M$, and is \emph{flux} otherwise. A triple $(g,b,\phi)\in \Conf(\cC,\cX)$ is \emph{trivial} if its flux-less and $g$ is flat. 
\end{definition}
  
\noindent
For a trivial triple $(g,b,\phi)\in \Sol(\cC,\cX)$ the NS-NS system reduces to:
\begin{equation*}
\nabla^g\varphi_{\phi} = 0\, , \qquad \vert\varphi_{\phi}\vert_g^2 = 0
\end{equation*}

\noindent
and thus it follows that $\varphi_{\phi}$ is a parallel light-like one-form on $M$ and consequently $(M,g,\varphi_{\phi})$ defines a three-dimensional flat Brinkmann space-time \cite{MehidiZeghib}. Note that this class of space-times can be very \emph{non-trivial} as Lorentzian manifolds \cite{Carriere}.

\begin{prop}
If $(g,b,\varphi)\in \Sol(\cC,M)$ non-trivial. Then $\nabla^g\varphi_{\phi}  \neq 0$ at some point in $M$.
\end{prop}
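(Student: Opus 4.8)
The plan is to argue by contradiction: I would assume that $\nabla^g\varphi_\phi = 0$ everywhere on the connected three-manifold $M$ and show that this forces the triple $(g,b,\phi)$ to be trivial, contradicting the hypothesis. The key initial observation is that a parallel one-form is coclosed and of constant norm: from $\nabla^g\varphi_\phi = 0$ one obtains $\nabla^{g\ast}\varphi_\phi = 0$ (the codifferential being, up to sign, the $g$-trace of $\nabla^g\varphi_\phi$) and $\dd\vert\varphi_\phi\vert^2_g = 0$. Feeding these into the reduced system \eqref{eq:NSNSsystem3d}, the dilaton (third) equation collapses to the pointwise identity $\vert\varphi_\phi\vert^2_g = -4\frf_b^2$, and since its left-hand side is constant, $\frf_b^2$ is constant as well; the Einstein (first) equation becomes the purely algebraic relation $\Ric^g = -2\frf_b^2\,g$.

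The argument then bifurcates according to the flux. If $\frf_b \equiv 0$, then $\Ric^g = 0$; in three dimensions the Weyl tensor vanishes identically, so the Riemann tensor is a universal expression in the Ricci tensor, and Ricci-flatness forces $g$ to be flat. The triple is then flux-less with flat metric, hence trivial, contrary to hypothesis. If instead $\frf_b \not\equiv 0$, then $\frf_b^2$ is a positive constant, so by continuity on the connected manifold $M$ the function $\frf_b$ is itself a nonzero constant, whence $\dd\frf_b = 0$; the Maxwell (second) equation $\dd\frf_b = \frf_b\,\varphi_\phi$ then gives $\frf_b\,\varphi_\phi = 0$, so $\varphi_\phi = 0$, and therefore $0 = \vert\varphi_\phi\vert^2_g = -4\frf_b^2$, contradicting $\frf_b \neq 0$. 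Since both branches are impossible, $\nabla^g\varphi_\phi$ cannot vanish identically.

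I do not expect a serious obstacle; the entire content lies in correctly propagating the parallelism hypothesis through the three equations of \eqref{eq:NSNSsystem3d}. The two points deserving care are the promotion of the norm constraint to constancy of $\frf_b^2$, which relies on connectedness of $M$, and the three-dimensional fact that \emph{Ricci-flat implies flat}, which is what closes the flux-less branch. Everything else is routine bookkeeping with the reduced NS-NS equations.
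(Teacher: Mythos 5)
Your proof is correct and follows essentially the same route as the paper: assume $\nabla^g\varphi_\phi\equiv 0$, deduce from the dilaton equation that $\frf_b$ is constant, rule out the case $\frf_b\neq 0$ via the Maxwell equation, and conclude in the flux-less case that $g$ is Ricci-flat and hence flat in three dimensions, contradicting non-triviality. The only cosmetic difference is that you invoke the vanishing of the Weyl tensor directly where the paper phrases the same fact as "a three-dimensional Ricci-flat Brinkmann space-time is flat."
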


\begin{proof}
Let $(g,b,\phi)\in \Sol(\cC,\cX)$ be non-trivial and assume that $\nabla^g\varphi_{\phi} =0$ identically. The norm of $\varphi_{\phi}$ is then constant, which by the third equation in \eqref{eq:NSNSsystem3d} implies that $\frf_b$ is also constant. Hence, either $\frf_b \neq 0$ or $\frf_b = 0$ identically on $M$. If $\frf_b \neq 0$, the second equation in \eqref{eq:NSNSsystem3d} implies $\varphi_{\phi} = 0$, which plugged back into the third equation of \eqref{eq:NSNSsystem3d} gives a contradiction. Therefore, $\frf_b = 0$ and the NS-NS system reduces to:
\begin{equation*}
	\Ric^g  = 0\, ,   \qquad      \vert \varphi_{\phi} \vert_g^2  = 0
\end{equation*}

\noindent
We conclude that $(g,b,\phi)\in \Sol(\cC,\cX)$ defines a three-dimensional Ricci-flat Brinkmann space-time which is therefore flat and thus $(g,b,\phi)$ is trivial.  
\end{proof}

\noindent
In the following we will be interested in non-trivial NS-NS solutions.

\begin{lemma}
\label{lemma:fluxsolution}
Let $(g,b,\phi)\in \Sol(\cC,\cX)$. Then, either $\frf_b = 0$ or else:
\begin{equation*}
\varphi_{\phi} = \dd \phi\, , \qquad \frf_b = c e^{\phi}\, , \qquad c\in \mathbb{R}^{\ast}
\end{equation*}

\noindent
for a function $\phi\in C^{\infty}(M)$ unique modulo an additive constant. In particular $\frf_b\in C^{\infty}(M)$ is nowhere vanishing.
\end{lemma}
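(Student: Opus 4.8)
The plan is to derive the entire statement from a single equation: the Maxwell equation $\dd \frf_b = \frf_b\,\varphi_\phi$, which is the second equation of \eqref{eq:NSNSsystem3d}. The Einstein and dilaton equations play no role in this lemma. First I would set up the dichotomy by studying the zero set $Z := \frf_b^{-1}(0)\subset M$: the claim is precisely that $Z$ is either all of $M$ (the flux-less case $\frf_b\equiv 0$) or empty (the nowhere-vanishing case).

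The key step, and the only genuinely nontrivial one, is to show that $Z$ is simultaneously open and closed. Closedness is immediate from continuity of $\frf_b$. For openness I would fix $m_0\in Z$ and a coordinate ball $U\ni m_0$; given any $m\in U$, joining $m_0$ to $m$ by a path $\gamma$ inside $U$ and setting $h(t):=\frf_b(\gamma(t))$, the Maxwell equation gives $h'(t)=\dd\frf_b(\dot\gamma(t))=h(t)\,\varphi_\phi(\dot\gamma(t))$, a homogeneous linear scalar ODE with $h(0)=0$. By uniqueness (equivalently, from the explicit formula $h(t)=h(0)\exp\!\int_0^t\varphi_\phi(\dot\gamma)$) we get $h\equiv 0$, hence $m\in Z$ and $U\subseteq Z$. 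Since $M$ is connected, $Z=\emptyset$ or $Z=M$; the latter is the flux-less alternative, so in the remaining case $\frf_b$ is nowhere vanishing. This establishes the ``in particular'' clause and is the heart of the argument.

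Once $\frf_b$ is nowhere vanishing on the connected manifold $M$, it has constant sign, so $\log|\frf_b|\in C^\infty(M)$ is globally defined. Dividing the Maxwell equation by $\frf_b$ yields $\varphi_\phi=\dd\frf_b/\frf_b=\dd\log|\frf_b|$, exhibiting $\varphi_\phi$ as exact with global primitive $\log|\frf_b|$. Setting $\phi:=\log|\frf_b|$ gives $\varphi_\phi=\dd\phi$ and $\frf_b=\sign(\frf_b)\,e^{\phi}=c\,e^{\phi}$ with $c=\pm 1\in\R^\ast$; since shifting $\phi$ by a constant $k$ rescales $c$ by $e^{-k}$, this accounts for the general $c\in\R^\ast$ appearing in the statement.

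For the uniqueness clause I would observe that if $\phi_1,\phi_2\in C^\infty(M)$ both satisfy $\varphi_\phi=\dd\phi_i$, then $\dd(\phi_1-\phi_2)=0$, so $\phi_1-\phi_2$ is constant by connectedness of $M$, with $c$ correspondingly rescaled. I expect the only point needing real care to be the openness of $Z$ via ODE uniqueness, ruling out that $\frf_b$ vanishes on a proper nonempty subset; everything after ``nowhere vanishing'' is routine manipulation.
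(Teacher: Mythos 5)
Your proposal is correct and follows essentially the same route as the paper: the paper also derives the dichotomy from the Maxwell equation alone, using ODE uniqueness for $\partial_t(\frf_b\circ\gamma) = (\frf_b\circ\gamma)\,\gamma^\ast\varphi_\phi$ along curves through a zero of $\frf_b$ together with connectedness of $M$, and then obtains $\varphi_\phi = \dd\log|\frf_b|$ in the nowhere-vanishing case. Your packaging of the propagation-of-zeros step as a clopen argument is a slightly tidier formulation of the same idea.
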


\begin{proof}
Let $(g,b,\varphi_{\phi})\in \Sol(\cC,\cX)$. If $\frf_b$ is nowhere vanishing, then the second equation in \eqref{eq:NSNSsystem3d} is equivalent to:
\begin{equation*}
\varphi_{\phi} = \dd\log(\vert \frf_b\vert) 
\end{equation*}

\noindent
and thus there exists a function $\phi\in C^{\infty}(M)$ such that $\varphi_{\phi}  = \dd\phi$ and $\frf_b = c e^{\phi}$ for a non-zero real constant $c$. Suppose now that $\frf_b$ has a zero on $m\in M$. Every smooth path $\gamma\colon \mathbb{R}\to M$ in $M$ passing through $m\in M$ at $t=0$ satisfies:
\begin{equation*}
\partial_t (\frf_b (\gamma_t)) = \frf_b (\gamma_t) \gamma^{\ast}_t\varphi_{\phi} = \frf_b (\gamma_t) F_t
\end{equation*}

\noindent
for a certain real function $F_t$ of $t$. This equation becomes an ordinary differential equation for the real function $\frf_b (\gamma_t)$ which, if it has a zero at $t=0$ then it is identically zero. Since this property holds for every such smooth curve $\gamma$ and $M$ is connected, we conclude that if $\frf_b$ is zero at some point then it is identically zero.
\end{proof}

\noindent
By the previous lemma, the study of NS-NS solutions splits into the study of flux-less NS-NS solutions, namely solutions with flat $b$-field, and \emph{flux} NS-NS solutions, namely solutions which have nowhere vanishing curvature $\frf_b$. For flux-less triples $(g,b,\phi)\in \Conf(\cC,M)$ the NS-NS system reduces to:

\begin{equation*}
\Ric^g + \nabla^g \varphi = 0\, ,   \qquad    \nabla^{g\ast}\varphi +  \vert \varphi\vert_g^2  = 0
\end{equation*}

\noindent
and thus it effectively reduces to differential system for pairs $(g,\varphi)$, where $g$ is a Lorentzian metric on $M$ and $\varphi$ is a closed one-form. Note that solutions to the first equation above correspond to steady Lorentzian Ricci solitons \cite{Gavino}. On the other hand, for triples $(g,b,\varphi)$ with $\frf_b \neq 0$ the NS-NS system reduces to a family of differential systems:
\begin{equation}
\label{eq:NSNSsystem3dreducedphi}
\Ric^g + \nabla^g \dd\phi + 2 c^2  e^{2\phi} g= 0\, ,  \qquad    \nabla^{g\ast}\dd\phi  +  \vert \dd\phi \vert_g^2 +  4 c^2  e^{2\phi} =  0\, , \qquad \frf_b = c e^{\phi}
\end{equation}

\noindent
parametrized by the non-zero real constant $c\in\mathbb{R}^{\ast}$. Hence, the NS-NS system decouples: the first two equations in \eqref{eq:NSNSsystem3dreducedphi} can be \emph{solved} independently and, once a solution $(g,\phi)$ has been found, a curving $b$ solving the third equation can be found as soon as $\frf_b$ defines an integral class in de Rham cohomology.

\begin{prop}
Let $M$ be an oriented compact three-manifold without boundary. Then $M$ admits no flux NS-NS solutions.
\end{prop}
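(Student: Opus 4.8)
The plan is to reduce everything to the dilaton equation and then exploit a substitution that cancels the sign-indefinite gradient term before integrating over the compact manifold. By Lemma~\ref{lemma:fluxsolution}, a flux solution $(g,b,\phi)\in\Sol(\cC,\cX)$ has $\frf_b$ nowhere vanishing, so $\varphi_{\phi}=\dd\phi$ for a genuine function $\phi\in C^{\infty}(M)$ and $\frf_b = c\,e^{\phi}$ with $c\in\R^{\ast}$; in particular $e^{-\phi}$ is a globally defined, strictly positive smooth function on $M$. The system then collapses to \eqref{eq:NSNSsystem3dreducedphi}, whose dilaton equation reads $\nabla^{g\ast}\dd\phi + \vert\dd\phi\vert_g^2 + 4c^2 e^{2\phi}=0$, where $\nabla^{g\ast}$ is the formal adjoint of $\nabla^g$, so that $\nabla^{g\ast}\dd\psi = -\mathrm{div}_g(\mathrm{grad}_g\psi)$ for every $\psi\in C^{\infty}(M)$.

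The first step is to compute $\nabla^{g\ast}\dd(e^{-\phi})$ using only the dilaton equation. Writing $\dd(e^{-\phi})=-e^{-\phi}\dd\phi$ and applying the Leibniz rule for $\nabla^{g\ast}$ on a one-form, one obtains $\nabla^{g\ast}\dd(e^{-\phi}) = -e^{-\phi}\nabla^{g\ast}\dd\phi - e^{-\phi}\vert\dd\phi\vert_g^2$. Substituting $\nabla^{g\ast}\dd\phi = -\vert\dd\phi\vert_g^2 - 4c^2 e^{2\phi}$ from the dilaton equation, the two occurrences of the sign-indefinite term $e^{-\phi}\vert\dd\phi\vert_g^2$ cancel and leave the clean identity $\nabla^{g\ast}\dd(e^{-\phi}) = 4c^2 e^{\phi}$. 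This cancellation is the heart of the argument.

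The final step is to integrate this identity against the Lorentzian volume form $\nu_g$. Since $M$ is compact, oriented and without boundary, the divergence theorem $\int_M \mathrm{div}_g(X)\,\nu_g = \int_M \cL_X\nu_g = \int_M \dd(\iota_X\nu_g)=0$ holds for any vector field $X$; this is a statement about the volume form alone and is therefore insensitive to the signature of $g$. Applying it to $X=\mathrm{grad}_g(e^{-\phi})$ gives $\int_M \nabla^{g\ast}\dd(e^{-\phi})\,\nu_g=0$, hence $4c^2\int_M e^{\phi}\,\nu_g=0$. But $c\neq 0$ and $e^{\phi}>0$ pointwise, so $\int_M e^{\phi}\,\nu_g\neq 0$, a contradiction; thus no flux NS-NS solution exists.

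The main obstacle to anticipate is exactly the Lorentzian signature: the naive strategy of integrating the dilaton equation directly is useless because $\vert\dd\phi\vert_g^2$ is not sign-definite, so $\int_M(\vert\dd\phi\vert_g^2 + 4c^2 e^{2\phi})\,\nu_g=0$ yields no contradiction. What rescues the argument is passing to $e^{-\phi}$, whose Laplacian is tailored so that the indefinite gradient term disappears, leaving a manifestly sign-definite quantity; once this is achieved the compactness obstruction is immediate and requires neither the Einstein nor the Maxwell equation of the NS-NS system.
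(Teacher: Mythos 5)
Your proof is correct, and it follows the same basic strategy as the paper --- the paper's entire proof is the single sentence ``Integrate the second equation in \eqref{eq:NSNSsystem3dreducedphi}'' --- but you have added a step that is genuinely needed to make that strategy work in Lorentzian signature. Integrating the dilaton equation directly only yields $\int_M \vert\dd\phi\vert_g^2\,\nu_g = -4c^2\int_M e^{2\phi}\,\nu_g$, which is not a contradiction because $\vert\dd\phi\vert_g^2$ is sign-indefinite for a Lorentzian metric; you identify this obstacle explicitly and resolve it with the substitution $\psi = e^{-\phi}$, for which the Leibniz rule (with the paper's convention $\nabla^{g\ast} = -\mathrm{div}_g\circ\sharp_g$, which one can confirm from the trace computation in the proof of Theorem \ref{thm:susysolutions}) cancels the indefinite gradient term and leaves $\nabla^{g\ast}\dd(e^{-\phi}) = 4c^2 e^{\phi} > 0$, whose integral must vanish on a closed oriented manifold. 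This is a completion of the paper's argument rather than a different route. One small inaccuracy in your closing remark: the argument does use the Maxwell equation, since Lemma \ref{lemma:fluxsolution} (which rests on the second equation of \eqref{eq:NSNSsystem3d}) is what guarantees that $\varphi_{\phi}$ admits a \emph{global} primitive $\phi$ and that $\frf_b = c\,e^{\phi}$; without it $\varphi_{\phi}$ is merely closed and the function $e^{-\phi}$ need not exist globally, so the substitution would not be available. Only the Einstein equation is genuinely unused.
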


\begin{proof}
Integrate the second equation in \eqref{eq:NSNSsystem3dreducedphi}.
\end{proof}

\noindent
Three-dimensional NS-NS flux solutions can however be globally hyperbolic with compact Cauchy hypersurface, and therefore can be studied from the point of view of the celebrated program initiated by Mess \cite{Mess,BonsanteSeppi}.

% % % % % % % % % % % % % % % % % % % % % % % % % % % % % % % % % % % % % % 

\subsection{Supersymmetric configurations}

% % % % % % % % % % % % % % % % % % % % % % % % % % % % % % % % % % % % % %

The notion of \emph{supersymmetric configuration} originated in supergravity and, for the three-dimensional NS-NS supergravity that we consider in this note, it involves skew-torsion parallel spinors. Given an element $(g,b,\phi)\in \Conf(\cC,\cX)$ we denote by $\nabla^{g,b}$ the unique metric connection on $(M,g)$ with totally skew-symmetric torsion given by $H_b\in \Omega^3(M)$, the curvature of $b\in\Omega^2(Y\times_M Y)$. This is the natural way in which supersymmetry realizes geometrically the notion of completely skew torsion of a metric connection. For ease of notation we denote by the same symbol $\nabla^{g,b}$ its lift to any irreducible spinor bundle defined on $(M,g)$.

\begin{definition}
A \emph{supersymmetric configuration} on $(\cC,\cX)$ is triple $(g,b,\phi)$ consisting of a Lorentzian metric $g$ on $M$, an equivariant function $\phi\in \cC^{\infty}(\cX)$, and a curving $b\in \Omega^2(Y)$ satisfying the following differential system:
\begin{equation}
\label{eq:KSE3d}
\nabla^{g,b} \varepsilon = 0\, , \qquad  (\varphi_{\phi} + H_b ) \cdot \varepsilon = 0
\end{equation}

\noindent
for a non-zero spinor $\varepsilon\in\Gamma(S)$ of an irreducible paired spinor bundle $(S,\Gamma,\cB)$ defined on $(M,g)$. Such spinor $\varepsilon\in \Gamma(S)$ is a \emph{supersymmetry parameter} or \emph{supersymmetry generator} for the supersymmetric configuration $(g,\varphi,b)$.
\end{definition}

\noindent
Therefore, the supersymmetry parameter of a supersymmetric configuration is in particular a skew-torsion parallel spinor. We denote by $\Conf_s(\cC,\cX)\subset \Conf(\cC,\cX)$ the set of supersymmetric configurations on $(\cC,\cX)$. 
 
\begin{prop}
\label{prop:susyconfiguration}
A triple $(g,b,\phi)\in \Conf_s(\cC,\cX)$ is a supersymmetric configuration if and only if there exists an isotropic one-form $u\in \Omega^1(M)$ on $(M,g)$ and a function $\frf_b\in C^{\infty}(M)$ such that:
\begin{equation}
\label{eq:susyconditions3d}
\nabla^g u = \frf_b \ast_g u\, , \qquad \varphi_{\phi} = \mathfrak{c}\, u + 2\frf_b n 
\end{equation} 

\noindent
for a function $\mathfrak{c}\in C^{\infty}(M)$, where $H_b = - 2\, \frf_b \nu_g$ is the curvature of the curving $b\in\Omega^2(Y)$.
\end{prop}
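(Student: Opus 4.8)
The plan is to read off both conditions in \eqref{eq:susyconditions3d} from the two Killing spinor equations \eqref{eq:KSE3d} by combining Proposition \ref{prop:torsionparallel} with Remark \ref{remark:algebraicconstraint}, so that the whole argument reduces to an algebraic computation in the truncated K\"ahler--Atiyah model $(\wedge^{<} M,\vee_g)$. The first equation $\nabla^{g,b}\varepsilon = 0$ says precisely that $\varepsilon$ is a skew-torsion parallel spinor for the metric connection with totally skew-symmetric torsion $H_b = -2\frf_b\nu_g$; hence Proposition \ref{prop:torsionparallel} applies verbatim and is equivalent to the existence of an isotropic Dirac current $u\in\Omega^1(M)$ satisfying $\nabla^g u = \frf_b\ast_g u$, the first equation in \eqref{eq:susyconditions3d}. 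Both implications of the proposition then follow once the algebraic constraint $(\varphi_{\phi} + H_b)\cdot\varepsilon = 0$ is translated into a condition on $u$.

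To do this I would regard the constraint as $\cQ(\varepsilon) = 0$ with $\cQ = \Gamma\circ\Psi^{-1}(\varphi_{\phi}+H_b)\in\Gamma(\End(S))$ and invoke Remark \ref{remark:algebraicconstraint}, which replaces it by the polyform equation $\frq\vee_g u = 0$, where $\frq = (\Psi_{\Gamma}^{<})^{-1}(\cQ)$ is the dequantization of $\cQ$. Using the Clifford relation \eqref{eq:Cliffordrelation}, together with the fact that $\varphi_{\phi}$ already lies in $\wedge^{<} M$ while $H_b$ is of top degree, I would compute
\begin{equation*}
\frq = \varphi_{\phi} + l\ast_g\tau(H_b) = \varphi_{\phi} - 2l\,\frf_b\, ,
\end{equation*}
where the scalar $\ast_g\tau(H_b) = -2\frf_b$ is obtained from $\tau(\nu_g) = -\nu_g$ and the normalization $\ast_g\nu_g = -1$ recorded in \eqref{eq:dualhodges}. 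The constraint therefore becomes $\varphi_{\phi}\vee_g u = 2l\,\frf_b\, u$.

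It then remains to unpack this identity in a null coframe $(u,v,n)\in\cP_u$. Writing the three-dimensional geometric product as in the worked example of Section \ref{sec:Lorentzianexample3d}, one has $\varphi_{\phi}\vee_g u = g^{\ast}(\varphi_{\phi},u) - \ast_g(\varphi_{\phi}\wedge u)$, whose scalar and one-form parts must separately match those of $2l\frf_b u$. The scalar part gives $g^{\ast}(\varphi_{\phi},u) = 0$, i.e. $\varphi_{\phi}$ has no $v$-component, so $\varphi_{\phi} = \mathfrak{c}\,u + c\,n$ for functions $\mathfrak{c},c\in C^{\infty}(M)$; the one-form part, using $\ast_g(n\wedge u) = -u$, forces $c = 2l\frf_b$, and absorbing the sign $l$ as in Section \ref{sec:spinorialdifferential} yields $\varphi_{\phi} = \mathfrak{c}\,u + 2\frf_b\,n$, the second equation in \eqref{eq:susyconditions3d}. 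Since every step in the chain is an equivalence, the converse direction follows by reading it backwards. I expect the only delicate point to be the correct evaluation of the Clifford action of the top-degree form $H_b$ in the truncated model, namely the factor $l\ast_g\tau$ in \eqref{eq:Cliffordrelation} and the attendant Hodge-star and $\tau$ signs; once these are pinned down, the remainder is a routine decomposition in the null coframe.
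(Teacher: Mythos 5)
Your proposal is correct and follows essentially the same route as the paper's own proof: Proposition \ref{prop:torsionparallel} handles the parallelism equation, and the algebraic constraint is dequantized via Remark \ref{remark:algebraicconstraint} and Equation \eqref{eq:Cliffordrelation} to $(\varphi_{\phi}-2l\,\frf_b)\vee_g u=0$, which is then solved in a null coframe. Your explicit separation into scalar and one-form parts, and the sign bookkeeping for $\tau$ and $\ast_g$ on the top-degree torsion, match the paper's computation.
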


\begin{proof}
By Proposition \ref{prop:torsionparallel}, the first equation in \eqref{eq:KSE3d} is satisfied if and only if there exists an isotropic vector field $u\in\mathfrak{X}(M)$ satisfying the first equation in \eqref{eq:susyconditions3d}. On the other hand, by Theorem \ref{thm:GCKS} and Remark \ref{remark:algebraicconstraint}, the second equation in \eqref{eq:KSE3d} is equivalent to:
\begin{eqnarray*}
(\Psi_{\Gamma}^{<})^{-1}(\Gamma\circ\Psi^{-})(\varphi_{\phi} + H_b) \vee_g u=    (\varphi_{\phi} - 2 l \frf_b) \vee_g u = \varphi_{\phi}(u) + l\ast_g(u\wedge \varphi_{\phi}) - 2l\frf_b u
\end{eqnarray*}

\noindent
which is solved precisely by $\varphi_{\phi} = \mathfrak{c}\, u + 2\frf_b n $ for a function $\mathfrak{c}\in C^{\infty}(M)$.
\end{proof}

\noindent
Hence, every supersymmetric configuration on $(\cC,\cX)$ defines a skew-torsion Lorentzian three-manifold $(M,g,u,\frf_b)$. We cannot expect however the converse to be true since, for a skew-torsion Lorentzian three-manifold to be part of a supersymmetric configuration the one-form $( \mathfrak{c}\, u + 2\frf_b n)\in \Omega^1(M)$ needs to be not only closed, but exact.

\begin{prop}
\label{prop:exactness}
A skew-torsion Lorentzian three-manifold $(M,g,u,\frf)$ defines a supersymmetric configuration if and only if there exists a function $\mathfrak{c}\in C^{\infty}(M)$ such that:
\begin{equation*}
[\mathfrak{c}\, u + 2\frf_b n]\in H^1(M,\mathbb{Z})
\end{equation*}

\noindent
defines a integral class.
\end{prop}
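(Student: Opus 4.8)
The plan is to reduce everything to Proposition \ref{prop:susyconfiguration}, which already translates the two supersymmetry equations \eqref{eq:KSE3d} into the pair of conditions \eqref{eq:susyconditions3d}. Thus a skew-torsion Lorentzian three-manifold $(M,g,u,\frf)$ underlies a supersymmetric configuration precisely when the datum $\varphi_{\phi}=\mathfrak{c}\,u+2\frf n$ demanded there can be realized as the curvature of an actual dilaton, that is, of an equivariant function $\phi\in C^{\infty}(\cX)$ on some principal $\mathbb{Z}$-bundle $\cX$, while $H_b=-2\frf\nu_g$ is realized as the curvature of a curving $b$ on some bundle gerbe $\cC$. Since $\frf$ is already the prescribed torsion function of the skew-torsion structure, the first equation in \eqref{eq:susyconditions3d} holds tautologically, so the entire content of the statement is the realizability of the one-form $\mathfrak{c}\,u+2\frf n$ as a Lee form. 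I would therefore phrase the proof as the identification of this realizability with the integrality of the associated de Rham class.

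For the direct implication, suppose $(M,g,u,\frf)$ is the skew-torsion data of a supersymmetric configuration $(g,b,\phi)\in\Conf_s(\cC,\cX)$. Proposition \ref{prop:susyconfiguration} then furnishes $\mathfrak{c}\in C^{\infty}(M)$ with $\varphi_{\phi}=\mathfrak{c}\,u+2\frf n$ and $\frf_b=\frf$. Because $\phi$ is a $\mathbb{Z}$-equivariant function on $\cX$, its curvature $\varphi_{\phi}$ is a globally defined closed one-form whose periods are exactly the integer jumps of $\phi$ along loops; hence its Lee class lies in the image of $H^1(M,\mathbb{Z})\to H^1(M,\mathbb{R})$, giving $[\mathfrak{c}\,u+2\frf n]\in H^1(M,\mathbb{Z})$.

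For the converse, suppose there is $\mathfrak{c}\in C^{\infty}(M)$ with $\varphi:=\mathfrak{c}\,u+2\frf n$ closed and $[\varphi]\in H^1(M,\mathbb{Z})$. Using the standard identification $H^1(M,\mathbb{Z})\cong[M,S^1]$, I would produce a smooth map $f\colon M\to S^1=\mathbb{R}/\mathbb{Z}$ with $[f^{\ast}\dd\theta]=[\varphi]$, set $\cX:=f^{\ast}\mathbb{R}$ (the pullback of the universal cover, a principal $\mathbb{Z}$-bundle), and take for $\phi$ the tautological $\mathbb{R}$-valued lift corrected by the genuine function on $M$ accounting for the exact difference $\varphi-f^{\ast}\dd\theta$, so that $\varphi_{\phi}=\varphi$ on the nose. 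Independently, the top-degree form $-2\frf\nu_g$ is closed, and I would realize it as $H_b$ for a curving $b$ on a bundle gerbe $\cC$ with connective structure---trivially on the trivial gerbe when $H^3(M,\mathbb{R})=0$, and otherwise whenever $[-2\frf\nu_g]$ is integral. Proposition \ref{prop:susyconfiguration}, read in the reverse direction, then shows that $(g,b,\phi)$ solves \eqref{eq:KSE3d}, i.e.\ defines a supersymmetric configuration on $(\cC,\cX)$ with underlying skew-torsion data $(M,g,u,\frf)$.

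The main obstacle I anticipate is conceptual rather than computational: making precise the correspondence between the $\mathbb{Z}$-equivariant dilaton $\phi$ and the integrality of its Lee class, which is exactly the bridge converting the abstract existence of a supersymmetric configuration into the cohomological criterion of the statement. A secondary point to handle with care is the realization of $-2\frf\nu_g$ as a gerbe curvature; I expect this to be harmless in the non-compact case, where $H^3(M,\mathbb{R})$ vanishes and the form is automatically exact, but to require the additional integrality of $[-2\frf\nu_g]$ in the compact orientable case, which should be recorded explicitly if a fully general statement is sought.
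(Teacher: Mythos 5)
Your proof is correct and follows essentially the same route as the paper: the forward direction is immediate from Proposition \ref{prop:susyconfiguration}, and the converse uses the standard correspondence between integral degree-one de Rham classes and curvatures of equivariant functions on principal $\mathbb{Z}$-bundles $\cX\to M$. Your additional caveat about realizing $-2\frf\,\nu_g$ as the curvature $H_b$ of a curving on a bundle gerbe is a genuine refinement that the paper's one-line proof passes over in silence; on a compact oriented $M$ this does impose integrality of $[-2\frf\,\nu_g]\in H^3(M,\mathbb{R})$, whereas on non-compact $M$ it is vacuous.
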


\begin{proof}
The \emph{only if} condition follows directly from Proposition \ref{prop:susyconfiguration}. To prove the converse, simply note that since $\mathfrak{c}\, u + 2\frf_b n$ is an integral one-form, it follows that there exists a principal $\mathbb{Z}$ bundle $\cX$ and an equivariant function $\phi \colon \cX\to \mathbb{R}$ such that $\varphi_{\phi} = \mathfrak{c}\, u + 2\frf_b n$.
\end{proof}

\noindent
We end this subsection by computing the covariant derivative of the curvature $\varphi_{\phi}$ of the dilaton.

\begin{lemma}
\label{lemma:nablavarphi}
Let $(g,b,\phi,\varepsilon)\in \Conf_s(\cC,\cX)$ be a supersymmetric configuration with associated Dirac current $u\in \Omega^1(M)$ and let $(u,v,n)\in \cP_u$ be any null coframe. Then:
\begin{eqnarray}
\label{eq:nablavarphi}
\nabla^g\varphi_{\phi} = ( 2 \dd\frf_b(v) - \mathfrak{c}\,\frf_b) u\odot n +  2\frf_b^2 u\odot v + 2 \dd\frf_b(n) n\otimes n + (\dd\mathfrak{c}(v) - 2 \frf_b \,\kappa(v)) u\otimes u
\end{eqnarray}
	
\noindent
where $\varphi_{\phi} = \mathfrak{c}\, u + 2\frf_b n$. Furthermore, $(u,v,n)$ satisfies the differential system \eqref{eq:exteriorconjugateparallelismtorsion} with respect to a one-form $\kappa\in \Omega^1(M)$ that satisfies the following relations:
\begin{eqnarray}
\label{eq:relationsck}
\dd\mathfrak{c}(u) = 2 \frf_b  (\frf_b +  \kappa(u))\, , \qquad \dd\mathfrak{c}(n) = 2 (\dd\frf_b(v) + \frf_b \kappa(n) - \mathfrak{c}\,\frf_b)
\end{eqnarray}
	
\noindent
in terms of $\mathfrak{c}\in C^{\infty}(M)$.
\end{lemma}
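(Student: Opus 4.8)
The plan is to obtain the two relations in \eqref{eq:relationsck} from the fact that $\varphi_\phi$ is closed, and then to substitute them into a direct Leibniz expansion of $\nabla^g\varphi_\phi$. By Proposition \ref{prop:susyconfiguration} the supersymmetric configuration endows $(M,g)$ with the skew-torsion structure $\nabla^g u=\frf_b\ast_g u$, so $(M,g,u,\frf_b)$ is a skew-torsion Lorentzian three-manifold (the function $\frf$ of the previous subsections being $\frf_b$ here) and the chosen null coframe $(u,v,n)\in\cP_u$ obeys \eqref{eq:derivativeconjugateparallelismtorsion} and \eqref{eq:exteriorconjugateparallelismtorsion} for some $\kappa\in\Omega^1(M)$, together with $\varphi_\phi=\mathfrak{c}\,u+2\frf_b\,n$. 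Throughout I would expand any one-form as $\omega=\omega(v^{\sharp_g})\,u+\omega(u^{\sharp_g})\,v+\omega(n^{\sharp_g})\,n$, which is the decomposition dual to $g=u\odot v+n\otimes n$.

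First I would record that $\dd\varphi_\phi=0$: by definition $\varphi_\phi$ is the descent to $M$ of $\dd\phi$, so its exterior derivative vanishes. Writing $\dd\varphi_\phi=\dd\mathfrak{c}\wedge u+\mathfrak{c}\,\dd u+2\,\dd\frf_b\wedge n+2\frf_b\,\dd n$ and inserting $\dd u=2\frf_b\,n\wedge u$ and $\dd n=u\wedge(\frf_b v+\kappa)$ from \eqref{eq:exteriorconjugateparallelismtorsion}, I would expand everything in the basis $\{u\wedge v,\,u\wedge n,\,v\wedge n\}$. Setting the coefficient of $u\wedge v$ to zero gives the first relation $\dd\mathfrak{c}(u)=2\frf_b(\frf_b+\kappa(u))$, the coefficient of $u\wedge n$ gives the second relation $\dd\mathfrak{c}(n)=2(\dd\frf_b(v)+\frf_b\kappa(n)-\mathfrak{c}\,\frf_b)$, and the coefficient of $v\wedge n$ gives the auxiliary identity $\dd\frf_b(u)=0$. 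This last identity is independently guaranteed by the skew-torsion structure, being exactly the integrability condition $\dd\frf_b\wedge u\wedge n=0$ that follows from differentiating the first equation of \eqref{eq:exteriorconjugateparallelismtorsion}.

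Next I would compute, by the Leibniz rule, $\nabla^g\varphi_\phi=\dd\mathfrak{c}\otimes u+\mathfrak{c}\,\nabla^g u+2\,\dd\frf_b\otimes n+2\frf_b\,\nabla^g n$, substituting $\nabla^g u=\frf_b(n\otimes u-u\otimes n)$ and $\nabla^g n=\frf_b\,u\otimes v-\kappa\otimes u$ from \eqref{eq:derivativeconjugateparallelismtorsion}. Expanding $\dd\mathfrak{c}$, $\dd\frf_b$ and $\kappa$ in the coframe and collecting the nine tensor products of $u$, $v$, $n$, the $v\otimes n$ term drops by $\dd\frf_b(u)=0$, the $v\otimes u$ coefficient becomes $2\frf_b^2$ after inserting the first relation, and the $n\otimes u$ coefficient becomes $2\dd\frf_b(v)-\mathfrak{c}\,\frf_b$ after inserting the second relation; since these now match the $u\otimes v$ and $u\otimes n$ coefficients respectively, the result regroups into the symmetric tensor \eqref{eq:nablavarphi}, with the remaining $u\otimes u$ and $n\otimes n$ pieces read off directly.

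The computation is essentially bookkeeping, so the point that needs care is precisely the bookkeeping: tracking the asymmetry of $\nabla^g u$ and $\nabla^g n$, the sign conventions $n\wedge u=-u\wedge n$ and $a\odot b=a\otimes b+b\otimes a$, and invoking $\dd\varphi_\phi=0$ at exactly the places where \eqref{eq:relationsck} and $\dd\frf_b(u)=0$ are needed to turn the raw expansion into the symmetric form of the statement. Equivalently, one may note that the relations \eqref{eq:relationsck} together with $\dd\frf_b(u)=0$ are nothing but the conditions forcing $\nabla^g\varphi_\phi$ to be symmetric, which is automatic from $\dd\varphi_\phi=0$; no geometric input beyond Proposition \ref{prop:susyconfiguration}, Lemma \ref{lemma:conjugatecovarianttorsion} and the closedness of $\varphi_\phi$ enters.
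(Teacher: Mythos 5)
Your proposal is correct and follows essentially the same route as the paper: derive \eqref{eq:relationsck} (together with $\dd\frf_b(u)=0$) from the closedness of $\varphi_\phi$ expanded in the basis $\{u\wedge v, u\wedge n, v\wedge n\}$, then apply the Leibniz rule with the covariant derivatives of Lemma \ref{lemma:conjugatecovarianttorsion} and substitute those relations to obtain the symmetric form \eqref{eq:nablavarphi}. The bookkeeping you describe checks out term by term against the paper's computation.
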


\begin{proof}
By Proposition \ref{prop:susyconfiguration} we have $\varphi_{\phi} = \mathfrak{c}\,u +  2\frf_b n$ in any given null coframe $(u,v,n)\in \cP_u$. Since $\varphi$ must be exact, it is necessarily closed, hence:
\begin{eqnarray*}
0 = \dd\varphi_{\phi} = \dd\mathfrak{c}\wedge u + \mathfrak{c}\,\dd u +  2 \dd\frf_b\wedge n + 2 \frf_b \dd n =  \dd\mathfrak{c}\wedge u + \mathfrak{c}\,\frf_b n\wedge u + 2 \dd\frf_b\wedge n  + 2 \frf_b u\wedge (\frf_b v + \kappa)
\end{eqnarray*}
	
\noindent
Solving the previous equation by using the integrability conditions for $(u,v,n)$, we obtain the given relations \eqref{eq:relationsck}. Applying now Equation \eqref{eq:derivativeconjugateparallelismtorsion} to compute the covariant derivative of $n$, we obtain:
\begin{eqnarray*}
&\nabla^g \varphi_{\phi} = \dd\mathfrak{c}\otimes u + \mathfrak{c} \nabla^g u + 2 \dd\frf_b \otimes n + 2\frf_b\nabla^g n \\
& = \dd\mathfrak{c}\otimes u + \mathfrak{c}\, \frf_b (n\otimes u - u \otimes n) + 2 \dd\frf_b \otimes n + 2\frf_b (\frf_b \,  u\otimes v - \kappa\otimes u ) 
\end{eqnarray*} 
	
\noindent
Plugging Equation \eqref{eq:relationsck} into the previous equation and expanding we obtain \eqref{eq:nablavarphi} and we conclude.
\end{proof}

\noindent
We will need the previous lemma to study supersymmetric solutions in the following subsection.
 
% % % % % % % % % % % % % % % % % % % % % % % % % % % % % % % % % % % % % % 

\subsection{Supersymmetric solutions}

% % % % % % % % % % % % % % % % % % % % % % % % % % % % % % % % % % % % % %

We consider now the supersymmetric solutions of the NS-NS system, which satisfy the natural curvature condition on skew-torsion Lorentzian three-manifolds that we will consider in this article.
\begin{definition}
A \emph{supersymmetric solution} is a supersymmetric configuration $(g,b,\phi)\in \Conf_s(\cC,\cX)$ that satisfies the NS-NS system on $(\cC,\cX)$.
\end{definition}

\noindent
We denote by $\Sol_s(\cC,\cX)$ the set of supersymmetric solutions on $(\cC,\cX)$. By the results of Section \ref{subsec:NSNSsolutions}, we have to distinguish between flux-less and flux supersymmetric NS-NS solutions. A quick inspection of equations \eqref{eq:NSNSsystem3d} and \eqref{eq:susyconditions3d} reveals that flux-less supersymmetric NS-NS solutions consist of flat bundle gerbes over flat Brinkmann space-times equipped with a parallel isotropic vector field given by the curvature of the dilaton. We focus then on flux solutions.

\begin{prop}
\label{prop:ddf}
A supersymmetric configuration $(g,b,\phi,\varepsilon)\in \Conf_s(\cC,\cX)$ satisfies the Einstein equation of the NS-NS system if and only if:  
\begin{eqnarray}
\label{eq:ddf}
& \dd\frf_b (v) = \mathfrak{c}\,\frf_b\, , \qquad \dd\frf_b(n) = 2 \frf_b^2 \, , \qquad \dd\mathfrak{c}(v) = \mathfrak{l} +   \frf_b \,\kappa(v) 
\end{eqnarray}

\noindent
where $\varphi_{\phi} = \mathfrak{c}\, u + 2\frf_b n$ in terms of any null coframe $(u,v,n)\in \cP_u$ associated to the Dirac current $u$ of $\varepsilon\in \Gamma(S)$.
\end{prop}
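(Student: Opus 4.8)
The Einstein equation referred to in the statement is the first equation of the three-dimensional NS-NS system \eqref{eq:NSNSsystem3d}, namely $\Ric^g + \nabla^g\varphi_{\phi} + 2\frf_b^2\, g = 0$. Since $(g,b,\phi,\varepsilon)\in\Conf_s(\cC,\cX)$ is a supersymmetric configuration, Proposition \ref{prop:susyconfiguration} shows that it determines a skew-torsion Lorentzian three-manifold $(M,g,u,\frf_b)$, so every curvature identity of Section \ref{sec:skewtorsion} applies with $\frf=\frf_b$, and we also have $\varphi_{\phi}=\mathfrak{c}\,u+2\frf_b\,n$. My plan is to substitute the two explicit symmetric $2$-tensors entering the Einstein equation into it, expand everything in a fixed null coframe $(u,v,n)\in\cP_u$, and match coefficients.

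Concretely, I would take the Ricci tensor from \eqref{eq:Riccicurvature} (with $\frf=\frf_b$), the covariant derivative $\nabla^g\varphi_{\phi}$ from Equation \eqref{eq:nablavarphi} of Lemma \ref{lemma:nablavarphi}, and the decomposition $g=u\odot v+n\otimes n$, and add the three contributions $\Ric^g$, $\nabla^g\varphi_{\phi}$ and $2\frf_b^2 g$. Expanding in the basis $\{u\otimes u,\ v\otimes v,\ n\otimes n,\ u\odot v,\ u\odot n,\ v\odot n\}$ of symmetric $2$-tensors, one first observes that no $v\otimes v$ or $v\odot n$ terms occur in any of the three summands, so those components vanish identically and impose no condition. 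It then remains to collect the surviving components, which I expect to be
\begin{align*}
(u\odot n):&\quad \dd\frf_b(v)-\mathfrak{c}\,\frf_b, \\
(u\odot v):&\quad \dd\frf_b(n)-2\frf_b^2, \\
(u\otimes u):&\quad \dd\mathfrak{c}(v)-\mathfrak{l}-\frf_b\,\kappa(v), \\
(n\otimes n):&\quad 0,
\end{align*}
where the $n\otimes n$ coefficient cancels automatically: the $-2\dd\frf_b(n)$ coming from $\Ric^g$ is compensated by the $+2\dd\frf_b(n)$ from $\nabla^g\varphi_{\phi}$, and the $-2\frf_b^2$ from $\Ric^g$ by the $+2\frf_b^2$ from $2\frf_b^2 g$. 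Since the six basis tensors are linearly independent, the tensorial Einstein equation holds if and only if each of these coefficients vanishes; setting the three non-trivial ones to zero yields exactly the system \eqref{eq:ddf}, and conversely \eqref{eq:ddf} forces the full equation, establishing the claimed equivalence in both directions.

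The computation is essentially linear bookkeeping once the two input formulas \eqref{eq:Riccicurvature} and \eqref{eq:nablavarphi} are in hand, so I do not anticipate a deep obstacle. The only points demanding care are: (i) using the convention $u\odot v=u\otimes v+v\otimes u$ consistently, so that $g=u\odot v+n\otimes n$ correctly reproduces $g(u^{\sharp_g},v^{\sharp_g})=1$ and $g(n^{\sharp_g},n^{\sharp_g})=1$ and the coefficient extraction is unambiguous; and (ii) verifying that the $n\otimes n$ component is automatically satisfied, which is the consistency check confirming that the three scalar relations in \eqref{eq:ddf} genuinely exhaust the content of the Einstein equation and that no further constraint is hidden in the remaining components.
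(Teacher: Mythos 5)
Your proposal is correct and follows essentially the same route as the paper's proof: substitute the explicit expressions \eqref{eq:Riccicurvature} and \eqref{eq:nablavarphi} into $\Ric^g+\nabla^g\varphi_{\phi}+2\frf_b^2\,g=0$, expand in the symmetric tensors built from the null coframe, and read off the vanishing of the $u\odot n$, $u\odot v$ and $u\otimes u$ coefficients, the $n\otimes n$ component cancelling automatically exactly as you describe. The coefficients you predict match the paper's computation \eqref{eq:computationEinstein} term for term.
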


\begin{proof}
Let $(g,b,\phi,\varepsilon)$ be a supersymmetric configuration with associated Dirac current $u\in \Omega^1(M)$ and fix a null coframe $(u,v,n)\in \cP_u$. Then, by Proposition \ref{prop:susyconfiguration} we have $\varphi_{\phi} = \dd\phi = \mathfrak{c}\, u + 2\frf_b n$ in terms of a function $\mathfrak{c}\in C^{\infty}(M)$. Using equations \eqref{eq:Riccicurvature} and \eqref{eq:nablavarphi}, we compute:
\begin{eqnarray}
	& 0 = \mathrm{Ric}^g + \nabla^g\varphi_{\phi} + 2\frf_b^2 = - \dd\frf(v) u\odot n - \dd\frf(n) g - \dd\frf(n)\, n\otimes n + (\frf\, \kappa(v) - \mathfrak{l})   u\otimes u \nonumber \\
	& + ( 2 \dd\frf_b(v) - \mathfrak{c}\,\frf_b) u\odot n +  2\frf_b^2 u\odot v + 2 \dd\frf_b(n) n\otimes n + (\dd\mathfrak{c}(v) - 2 \frf_b \,\kappa(v)) u\otimes u \label{eq:computationEinstein}\\
	& =   (\dd\mathfrak{c}(v) - \mathfrak{l} -   \frf_b \,\kappa(v))   u\otimes u  + (\dd\frf_b(v) - \mathfrak{c}\,\frf_b) u\odot n +  (2\frf_b^2- \dd\frf(n)) u\odot v  \nonumber
\end{eqnarray}

\noindent
Hence $\dd\frf_b (v) = \mathfrak{c}\,\frf_b$, $\dd\frf_b(n) = 2 \frf_b^2$, $\dd\mathfrak{c}(v) = \mathfrak{l} +   \frf_b \,\kappa(v)$ which together with Equation \eqref{eq:relationsck} we conclude.
\end{proof}

\noindent
By the previous proposition, we introduce the following notation:
\begin{equation*}
\frc^v_b := v(\log(\frf_b)) = \dd\log(\frf_b)(v)
\end{equation*}

\noindent
that we will use in the following for simplicity in the exposition. Let $(g,b,\phi)\in \Sol(\cC,\cX)$ be a NS-NS solution. Then, by the third equation in \eqref{eq:NSNSsystem3dreducedphi}, we can equivalently write $(g,b,\phi) = (g,b,\log(\frf_b)+c)$ for a certain constant $c\in \mathbb{R}$ which is uninteresting to our purposes. Hence, the pair $(g,b)$ already determines $(g,b,\phi)$ modulo the aforementioned constant, so in the following we will occasionally refer to a NS-NS solution simply by $(g,b)\in \Sol(\cX,\cQ)$.

\begin{prop}
A tuple $(g,b,\phi,\varepsilon)$ is a supersymmetric solution if and only if $(M,g,\varepsilon)$ is a skew-torsion Lorentzian manifold with respect to a nowhere vanishing $\frf_b$, the one-form:
\begin{equation*}
\dd\log(\frf_b)(v)\, u + 2\frf_b n \in \Omega^1(M) 
\end{equation*}

\noindent
is exact and $(g,b,\log(\frf_b))$ satisfies the Einstein equation of the NS-NS system. 
\end{prop}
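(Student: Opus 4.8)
The plan is to prove the biconditional by reducing everything to the coframe decomposition $\varphi_{\phi} = \mathfrak{c}\,u + 2\frf_b n$ supplied by Proposition~\ref{prop:susyconfiguration}, and then to show that in the flux regime the Maxwell and dilaton equations of the NS-NS system are \emph{forced} by the Einstein equation together with the skew-torsion structure. The central observation, which I expect to be the conceptual crux, is that once $\frf_b$ is nowhere vanishing the whole NS-NS system collapses onto the single Einstein equation plus the exactness of the distinguished one-form $\dd\log(\frf_b)(v)\,u + 2\frf_b n$; the remaining two equations contribute no independent content.

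First I would treat the forward direction. Assume $(g,b,\phi,\varepsilon)$ is a supersymmetric solution; by Lemma~\ref{lemma:fluxsolution} either $\frf_b=0$ or $\frf_b$ is nowhere vanishing, and the statement concerns the latter (flux) case, so I take $\frf_b$ nowhere zero. Being in $\Conf_s(\cC,\cX)$, Proposition~\ref{prop:susyconfiguration} produces an isotropic Dirac current $u$ with $\nabla^g u = \frf_b\ast_g u$ and $\varphi_{\phi} = \mathfrak{c}\,u + 2\frf_b n$, which by Proposition~\ref{prop:torsionparallel} exhibits $(M,g,\varepsilon)$ as a skew-torsion Lorentzian three-manifold; moreover Lemma~\ref{lemma:fluxsolution} gives $\varphi_{\phi}=\dd\phi$ with $\frf_b=c\,e^{\phi}$, so $\varphi_{\phi}$ is exact. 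The Einstein equation holds by hypothesis, and since $\dd\phi=\dd\log(\frf_b)$ it holds equally for $(g,b,\log(\frf_b))$; applying Proposition~\ref{prop:ddf} then yields $\dd\frf_b(v)=\mathfrak{c}\,\frf_b$, i.e. $\mathfrak{c}=\dd\log(\frf_b)(v)$. Substituting back identifies the exact one-form $\varphi_{\phi}$ with $\dd\log(\frf_b)(v)\,u + 2\frf_b n$, delivering all three asserted conditions.

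For the converse I would reconstruct a full solution from the three hypotheses. Given the skew-torsion structure with nowhere vanishing $\frf_b$ and the exact one-form $\omega := \dd\log(\frf_b)(v)\,u + 2\frf_b n = \dd\phi$, I set $\mathfrak{c}:=\dd\log(\frf_b)(v)$, $\varphi_{\phi}:=\omega=\mathfrak{c}\,u+2\frf_b n$, and $H_b:=-2\frf_b\nu_g$; taking $\cX$ trivial, Proposition~\ref{prop:susyconfiguration} certifies that $(g,b,\phi,\varepsilon)$ is a supersymmetric configuration. It then remains to verify the NS-NS system. The Einstein equation is hypothesis (c). For the Maxwell equation $\dd\frf_b=\frf_b\varphi_{\phi}$ I would expand both sides via $\eta=\eta(v)\,u+\eta(u)\,v+\eta(n)\,n$: the $u$- and $n$-components reduce to $\dd\frf_b(v)=\mathfrak{c}\,\frf_b$ and $\dd\frf_b(n)=2\frf_b^2$, both furnished by Proposition~\ref{prop:ddf} under (c), while the $v$-component $\dd\frf_b(u)=0$ is exactly the integrability condition $\dd\frf_b\wedge u\wedge n=0$ holding on every skew-torsion Lorentzian three-manifold. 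For the dilaton equation I would compute $\vert\varphi_{\phi}\vert_g^2=4\frf_b^2$ from $g^\ast(u,u)=g^\ast(u,n)=0$ and $g^\ast(n,n)=1$, and read off $\nabla^{g\ast}\varphi_{\phi}=-\tr_g\nabla^g\varphi_{\phi}=-(4\frf_b^2+2\dd\frf_b(n))=-8\frf_b^2$ from the formula for $\nabla^g\varphi_{\phi}$ in Lemma~\ref{lemma:nablavarphi} (applicable since $\varphi_\phi$ is closed) together with $\dd\frf_b(n)=2\frf_b^2$; the equation $-8\frf_b^2+4\frf_b^2+4\frf_b^2=0$ then closes identically.

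The step I expect to demand the most care is the bookkeeping of coframe components and trace conventions rather than any structural difficulty: one must confirm that the shorthands $\eta(u),\eta(v),\eta(n)$ denote evaluation on the metric-dual frame, that $\dd\frf_b(u)=0$ is genuinely the stated integrability condition and not an extra hypothesis, and that the codifferential sign convention $\nabla^{g\ast}=-\tr_g\nabla^g$ is the one that makes the dilaton equation close. Once these are fixed, both directions are direct substitutions, so I would present the verification as a short computation invoking no new input beyond Propositions~\ref{prop:susyconfiguration}, \ref{prop:ddf} and Lemma~\ref{lemma:nablavarphi}.
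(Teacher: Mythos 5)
Your proposal is correct and follows essentially the same route as the paper: both directions reduce to Proposition \ref{prop:susyconfiguration}, Proposition \ref{prop:ddf} and Lemma \ref{lemma:nablavarphi}, with the converse verifying Maxwell via $\dd\frf_b(u)=0$ (integrability) together with the two relations of Proposition \ref{prop:ddf} — the paper packages this as $\dd\phi=\dd\log(\frf_b)$, hence $\frf_b=ce^{\phi}$ — and the dilaton equation closing by the identical trace computation $\nabla^{g\ast}\varphi_{\phi}+\vert\varphi_{\phi}\vert_g^2+4\frf_b^2=2(2\frf_b^2-\dd\frf_b(n))=0$.
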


\begin{proof}
The \emph{only if} direction follows directly from equations \eqref{eq:NSNSsystem3dreducedphi} together with propositions \ref{prop:susyconfiguration}, \ref{prop:exactness} and \ref{prop:ddf}. To prove the \emph{if} direction we consider a skew-torsion Lorentzian three-manifold $(M,g,\frf_b,u)$ such that $\dd\log(\frf_b)(v)\, u + 2\frf_b n \in \Omega^1(M)$ is exact and $(g,b,\log(\frf_b))$ satisfies the Einstein equation in \eqref{eq:NSNSsystem3d}. Since the one-form $\dd \log(\frf_b)(v)\, u + 2\frf_b n \in \Omega^1(M)$ is exact, there exists a function $\phi\in C^{\infty}(M)$ such that:
\begin{eqnarray*}
\dd \phi = \dd \log(\frf_b)(v)\, u + 2\frf_b n
\end{eqnarray*}

\noindent
Evaluating on $(u,v,n)$ we obtain:
\begin{eqnarray*}
\dd f (u) = 0\, , \qquad \dd f(v) = \dd \log(\frf_b)(v)\, , \qquad \dd f(n) = 2\frf_b (n) = \dd\log(\frf_b)(n)
\end{eqnarray*}

\noindent
where we have used Proposition \ref{prop:ddf} by virtue of the Einstein equation being satisfied by $(g,b,\log(\frf_b))$. Since $\dd\frf_b(u) = 0$ by the integrability conditions \eqref{eq:integrabilityconditions} of $(u,v,n)$, we obtain $\dd \phi = \dd\log(\frf_b)$ and consequently $\frf_b = c e^{\phi}$ for a non-zero constant $c\in \mathbb{R}$. Hence, setting $\varphi_{\phi} =\dd\phi = \dd\log(\frf_b)(v)\, u + 2\frf_b n$, the tuple $(g,b,\phi,\varepsilon)$ becomes a supersymmetric configuration that satisfies the Einstein and Maxwell equations of the NS-NS system. Hence, only the dilaton equation needs to be checked. We compute:
\begin{eqnarray*}
0 = \nabla^{g\ast}\varphi_{\phi} + \vert\varphi_{\phi}\vert^2_g +4\frf_b^2 = - 4\frf_b^2 - 2\dd\frf_b(n) + 4\frf_b^2 + 4\frf_b^2 = 2(2\frf_b^2 - \dd\frf(n))
\end{eqnarray*}

\noindent
and therefore we conclude since by Proposition \ref{prop:ddf} we have $2\frf_b^2 = \dd\frf(n)$.
\end{proof}

\begin{thm}
\label{thm:susysolutions}
A NS-NS flux configuration $(g,b)\in \Conf(\cC,\cX)$ is a supersymmetric solution of the NS-NS system if and only if there exists a nowhere vanishing isotropic vector $u\in\Omega^1(M)$ satisfying:    
\begin{align}
& \dd u = 2 \frf_b\,n\wedge u\, , \qquad  2\frf_b \dd v = (\mathfrak{K}\, u + u(\frc_b^v) v)\wedge n\, , \qquad 2\frf_b\dd n =  u\wedge (u(\frc_b^v) v + n(\frc_b^v) n) \label{eq:susysystem} \\
& \frac{n(\mathfrak{K})}{2\frf_b}   =  \frac{u(\frc_b^v)}{4\frf_b^2} \mathfrak{K} +  v(\frac{n(\frc_b^v)}{2\frf_b}) + \frac{n(\frc_b^v)^2}{4\frf^2_b} - v(\frc_b^v)\, , \qquad \dd\frf_b(n) = 2 \frf_b^2 \, , \qquad  0 = [\frc_b^v u + 2\frf_b n] \in H^1(M,\mathbb{R})\nonumber 
\end{align}

\noindent
for a function $\mathfrak{K}\in C^{\infty}(M)$ and for any, and hence for all, null coframes $(u,v,n) \in \cP_u$ associated to $u\in \Omega^1(M)$.
\end{thm}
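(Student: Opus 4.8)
The plan is to derive Theorem~\ref{thm:susysolutions} from the characterization of supersymmetric solutions obtained in the proposition immediately preceding it, combined with the coframe description of skew-torsion Lorentzian three-manifolds. That characterization reduces the assertion that $(g,b,\phi,\varepsilon)$ is a supersymmetric solution to three conditions: (i) $(M,g,u,\frf_b)$ is a skew-torsion Lorentzian three-manifold with $\frf_b$ nowhere vanishing, which holds automatically for a flux configuration by Lemma~\ref{lemma:fluxsolution}; (ii) the one-form $\frc_b^v\,u + 2\frf_b\,n$ is exact; and (iii) $(g,b,\log(\frf_b))$ satisfies the Einstein equation of the NS-NS system. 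First I would recast (i) in coframe form: by Corollary~\ref{cor:conjugateparalleltorsion} and Proposition~\ref{prop:exteriorconjugateparallelismtorsion}, it is equivalent to the existence of a null coframe $(u,v,n)\in\cP_u$ satisfying \eqref{eq:exteriorconjugateparallelismtorsion} for some $\kappa\in\Omega^1(M)$, whose first equation is exactly $\dd u = 2\frf_b\,n\wedge u$. Condition (ii) is verbatim the cohomological condition $0 = [\frc_b^v\,u + 2\frf_b\,n]\in H^1(M,\mathbb{R})$.

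The next step is to trade the auxiliary data $\mathfrak{c}$ and $\kappa$ for the functions $\frf_b$, $\frc_b^v$ and a single new function $\mathfrak{K}$. By Proposition~\ref{prop:susyconfiguration} the curvature of the dilaton is $\varphi_{\phi} = \mathfrak{c}\,u + 2\frf_b\,n$, and the first Einstein relation of Proposition~\ref{prop:ddf} forces $\mathfrak{c} = \dd\frf_b(v)/\frf_b = \frc_b^v$, while the second identifies with $\dd\frf_b(n)=2\frf_b^2$. Expanding $\kappa = \kappa(v)\,u + \kappa(u)\,v + \kappa(n)\,n$ in the coframe and inserting the relations \eqref{eq:relationsck} of Lemma~\ref{lemma:nablavarphi} (now with $\mathfrak{c}=\frc_b^v$) yields $2\frf_b(\frf_b+\kappa(u)) = u(\frc_b^v)$ and $2\frf_b\,\kappa(n) = n(\frc_b^v)$. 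Setting $\mathfrak{K} := 2\frf_b\,\kappa(v)$ and substituting these back into the $\dd v$ and $\dd n$ equations of \eqref{eq:exteriorconjugateparallelismtorsion} reproduces verbatim the second and third equations of the first line of \eqref{eq:susysystem}. At this stage every relation in \eqref{eq:susysystem} except the long scalar equation has been obtained.

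The hard part will be the remaining scalar equation, which is nothing but the last, so-far unused, Einstein relation $\dd\mathfrak{c}(v) = \mathfrak{l} + \frf_b\,\kappa(v)$ of Proposition~\ref{prop:ddf} once the auxiliary coefficient $\mathfrak{l}$ has been eliminated. The point is that Proposition~\ref{prop:dkappa} only \emph{names} $\mathfrak{l}$ as the coefficient of $u\wedge n$ in $\dd\kappa$; to make the Einstein relation a closed condition on $(\frf_b,\frc_b^v,\mathfrak{K})$ I would compute $\dd\kappa$ directly from $\kappa = \kappa(v)\,u+\kappa(u)\,v+\kappa(n)\,n$ using the exterior derivatives of $u,v,n$ fixed above, read off its $u\wedge n$ component, and thereby express $\mathfrak{l}$ explicitly through $\frf_b$, $\frc_b^v$, $\mathfrak{K}$ and their frame derivatives. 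Imposing $v(\frc_b^v) = \mathfrak{l} + \tfrac12\mathfrak{K}$, dividing by $2\frf_b$, and using $n(\frf_b)=2\frf_b^2$ to rewrite $n(\mathfrak{K})/(2\frf_b)$ then collapses the identity to the displayed scalar equation. This elimination is the only genuinely computational obstacle, and care is needed because terms such as $n(\kappa(v))$ and $v(\kappa(n))$ recombine precisely under the substitution $n(\frf_b)=2\frf_b^2$.

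Finally, the converse runs these identifications backwards: starting from a nowhere vanishing isotropic $u$ and a coframe satisfying \eqref{eq:susysystem}, I would \emph{define} $\mathfrak{c} := \frc_b^v$ and $\kappa$ through $\kappa(v) = \mathfrak{K}/(2\frf_b)$, $\kappa(u) = u(\frc_b^v)/(2\frf_b) - \frf_b$, $\kappa(n) = n(\frc_b^v)/(2\frf_b)$, so that \eqref{eq:relationsck} holds by construction. The first line of \eqref{eq:susysystem} then gives \eqref{eq:exteriorconjugateparallelismtorsion}, hence a skew-torsion structure by Proposition~\ref{prop:exteriorconjugateparallelismtorsion} and Proposition~\ref{prop:torsionparallel}; the cohomological condition gives the exactness of $\varphi_\phi$; and the scalar equation together with $\dd\frf_b(n)=2\frf_b^2$ reconstitutes all three Einstein relations of Proposition~\ref{prop:ddf}. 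The characterization of supersymmetric solutions recalled at the outset then yields that $(g,b,\phi,\varepsilon)$ is a supersymmetric solution, completing the proof.
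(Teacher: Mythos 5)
Your proposal is correct and follows essentially the same route as the paper's proof: identifying $\kappa(u)$ and $\kappa(n)$ from the relations \eqref{eq:relationsck} with $\mathfrak{c}=\frc_b^v$, setting $\mathfrak{K}=2\frf_b\,\kappa(v)$, eliminating the auxiliary coefficient $\mathfrak{l}$ by computing $\dd\kappa$ from the coframe expression and matching its $u\wedge n$ component against Proposition \ref{prop:dkappa}, and then reversing these identifications for the converse. Your explicit reduction to the unnumbered proposition preceding the theorem is just a repackaging of the same chain through Propositions \ref{prop:susyconfiguration} and \ref{prop:ddf} and Lemma \ref{lemma:nablavarphi} that the paper carries out inline.
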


\begin{remark}
Supersymmetric configurations and solutions of ten-dimensional NS-NS supergravity have been studied before in the supergravity literature under the assumption of existence of a compatible absolute parallelism \cite{Figueroa-OFarrill:2003fkz,Figueroa-OFarrill:2003gow,Kawano:2003af}. In our framework, this case requires setting $\kappa = \frf_b v$, which imposes strong constraints on the underlying geometry. 
\end{remark}

\begin{remark}
By Remark \ref{remark:Kundttorsion}, we can refer to three-dimensional NS-NS supersymmetric solutions, and in particular to the Lorentzian three-manifolds characterized in the previous theorem as \emph{supersymmetric Kundt three-manifolds}. This defines an interesting class of Kundt three-manifolds that has not been studied in the mathematics literature.
\end{remark}

\begin{proof}
Let $(g,b)\in \Sol_s(\cC,\cX)$ be a supersymmetric NS-NS solution with supersymmetry parameter $\varepsilon\in\Gamma(S)$ and associated Dirac current $u\in\Omega^1(M)$. Then, by Lemma  \ref{lemma:conjugatecovarianttorsion} and Proposition \ref{prop:susyconfiguration}, every null coframe $(u,v,n)\in \cP_u$ associated to $u$ satisfies equations \eqref{eq:derivativeconjugateparallelismtorsion} for a certain $\kappa \in \Omega^1(M)$. Equations \eqref{eq:derivativeconjugateparallelismtorsion} imply in turn that $(u,v,n)\in\cP_u$ satisfies the exterior differential system \eqref{eq:exteriorconjugateparallelismtorsion} for the same $\kappa \in \Omega^1(M)$. On the other hand, by Proposition \ref{prop:ddf} we have:
\begin{equation*}
\varphi_{\phi} = \dd\phi = v(\log(\frf_b)) u + 2\frf_b
\end{equation*}

\noindent
as well as $\frf_b = c e^{\phi}$ for a non-zero real constant in $c$ by Lemma \ref{lemma:fluxsolution}. Furthermore, again by Proposition \ref{prop:ddf} together with Lemma \ref{lemma:nablavarphi}, see equations \eqref{eq:relationsck}, we have:
\begin{equation*}
 \mathfrak{c}^v_b = v(\log(\frf_b))\, ,  \quad v(\mathfrak{c}^v_b) = \mathfrak{l} +  \frf_b \,\kappa(v)\, , \quad u(\mathfrak{c}^v_b) = 2 \frf_b  (\frf_b +  \kappa(u))\, , \quad n(\mathfrak{c}^v_b) = 2 \frf_b \kappa(n) \, , \quad 2\frf_b^2 = \dd\frf(n)
\end{equation*}

\noindent
which are satisfied by any given null coframe $(u,v,n)\in\cP_u$ associated to the Dirac current $u\in \Omega^1(M)$ corresponding to the underlying skew-torsion parallel spinor.  These equations imply that the one-form $\kappa$ respect to which $(u,v,n)\in\cP_u$ satisfies the exterior differential system \eqref{eq:exteriorconjugateparallelismtorsion} is given by:
\begin{eqnarray}
\label{eq:equationsproof}
\kappa = \kappa(v) u + (\frac{u(\mathfrak{c}^v_b)}{2\frf_b} - \frf_b) v + \frac{n(\mathfrak{c}^v_b)}{2\frf_b} n = \kappa(v) u + (\frac{u(v(\frf_b))}{2\frf^2_b} - \frf_b) v + (\frac{n(v(\frf_b))}{2\frf_b^2} - \frac{v(\frf_b)}{\frf_b}) n
\end{eqnarray}

\noindent
From this equation, we compute:
\begin{eqnarray*}
& \dd\kappa = (v(\kappa(u)) + \kappa(n) (\frf_b + \kappa(u)) -u(\kappa(v)) ) u\wedge v \\
& + (\kappa(u)\kappa(v) + v(\kappa(n)) + \kappa(n)^2 -n(\kappa(v)) - 2\frf_b \kappa(v)) u\wedge n + (-n(\kappa(u)) + \kappa(u)(\frf_b + \kappa(u)) + u(\kappa(n))) v\wedge n
\end{eqnarray*}

\noindent
Comparing this expression for $\dd\kappa \in \Omega^2(M)$ with the expression obtained in Proposition \ref{prop:dkappa} via the integrability conditions, we obtain the following differential system:

\begin{eqnarray*}
&  v(\kappa(u)) + \kappa(n) (\frf_b + \kappa(u)) -u(\kappa(v)) + v(\frf_b) = 0\\
&  \kappa(u)\kappa(v) + v(\kappa(n)) + \kappa(n)^2 -n(\kappa(v)) - 2\frf_b \kappa(v) = \mathfrak{l}\\
&  \kappa(u)^2 -n(\kappa(u)) + u(\kappa(n)) = 3\frf_b^2
\end{eqnarray*}

\noindent
where:
\begin{eqnarray*}
\kappa(u) := \frac{u(v(\frf_b))}{2\frf^2_b} - \frf_b \, , \qquad \kappa(n) =\frac{n(v(\frf_b))}{2\frf_b^2} - \frac{v(\frf_b)}{\frf_b} 
\end{eqnarray*}

\noindent
Note that the only unknown in the previous system of equations is $\kappa(v) \in C^{\infty}(M)$. Of this system of equations we are interested in the equation that determines $\mathfrak{l}$, which plugged back into the second equation in \eqref{eq:equationsproof} gives the explicit expression for the last equation that we need to consider, which is at this point equivalent to the Einstein equation in the NS-NS system, namely:
\begin{eqnarray*}
v(\mathfrak{c}^v_b) = \mathfrak{l} +  \frf_b \,\kappa(v) = -n(\kappa(v)) - 2\frf_b \kappa(v) + \frac{u(\mathfrak{c}^v_b)}{2\frf_b} \kappa(v) + v(\frac{n(\mathfrak{c}^v_b)}{2\frf_b}) + (\frac{n(\mathfrak{c}^v_b)}{2\frf_b})^2
\end{eqnarray*}

\noindent
This is the equation in the statement of the theorem after setting $\mathfrak{K} := 2\frf_b \kappa(v)$.
 
Conversely, let $(g,b,\phi)\in \Conf(\cC,\cX)$ and assume that we have a nowhere vanishing isotropic vector field $u\in \Omega^1(M)$ on $(M,g)$. Let $(u,v,n)\in \cP_u$ be a null coframe that satisfies all equations in \eqref{eq:susysystem} with respect to a given function $\mathfrak{K}\in C^{\infty}(M)$. By assumption, $(u,v,n)$ satisfies \eqref{eq:exteriorconjugateparallelismtorsion} with respect to a one-form $\kappa \in \Omega^1(M)$ of the form:
\begin{eqnarray*}
\kappa = \frac{\mathfrak{K}}{2\frf_b} u + (\frac{u(\mathfrak{c}^v_b)}{2\frf_b} - \frf_b) v + \frac{n(\mathfrak{c}^v_b)}{2\frf_b} n  
\end{eqnarray*}

\noindent
This in turn implies that $(u,v,n)$ satisfies equations \eqref{eq:derivativeconjugateparallelismtorsion} with respect to the metric $g = u\odot v + n\otimes n$ and the same one-form $\kappa \in \Omega^1(M)$. Hence $(M,g)$ admits a skew-torsion parallel spinor $\varepsilon \in \Gamma(S)$ with respect to a metric connection with totally skew-symmetric torsion $H =  - 2\frf_b \nu_g$. Furthermore, since $0 = [\mathfrak{c}^v_b\, u + 2\frf_b n]\in H^1(M,\mathbb{R})$ there exists a function $\phi\in C^{\infty}(M)$ such that:
\begin{eqnarray*}
	\dd\phi = \mathfrak{c}^v_b\, u + 2\frf_b n
\end{eqnarray*}

\noindent
Hence we obtain that $\frf_b = c e^{\phi}$ for a non-zero real constant $c\in \mathbb{R}$ upon use of $\dd\frf(u) = 0$ by the integrability conditions of $(u,v,n)$. Then, by Proposition \ref{prop:susyconfiguration}, the tuple $(g,\frf,\phi,\varepsilon)$ is in fact a supersymmetric configuration of NS-NS supergravity. Furthermore, condition $2\frf_b^2 = \dd\frf(n)$ implies that the dilaton equation holds by the same computation as in the proof of Proposition \ref{prop:ddf}. On the other hand, by the same computation as in \eqref{eq:computationEinstein}, the Einstein equation evaluated on $(g,\frf_b,\phi)$ reduces to:
\begin{eqnarray*}
v(\mathfrak{c}_b^v) = \mathfrak{l} +   \frf_b \,k
\end{eqnarray*}

\noindent
After identifying $\mathfrak{l}$ in terms of $\frf_b$ and its derivatives with respect to the given parallelism $(u,v,n)\in \cP_u$ this equation reduces to the first equation in the second line of \eqref{eq:susysystem} and therefore is satisfied by assumption.
\end{proof}

\begin{remark}
It is remarkable that if $(u,v,n)\in \cP_u$ satisfies the differential system of the statement of the previous theorem, then any other null coframe $(u,v^{\prime},n^{\prime})\in \cP_u$ also satisfies it, perhaps for a differente $\mathfrak{K}$. In particular, for the curvature of the dilaton, we have:
\begin{eqnarray*}
& \dd \phi = \dd\log(\frf_b)(v^{\prime})\, u + 2\frf_b n^{\prime} = \dd\log(\frf_b)(v - \frac{F^2}{2} u + F n)\, u + 2\frf_b n - 2\frf_b F\, u \\
& = \dd\log(\frf_b)(v)\, u + 2\frf_b n + F \dd\log(\frf_b)(n)\, u - 2\frf_b F\, u = \dd\log(\frf_b)(v)\, u + 2\frf_b n 
\end{eqnarray*}

\noindent
where $(u,v^{\prime},n^{\prime}) = (u,v - \frac{F^2}{2} u + F u,n  - F u)$ for a function $F\in C^{\infty}(M)$.
\end{remark}

\noindent
To the best of our knowledge the previous theorem provides what seems to be one of the first global characterizations of the supersymmetric NS-NS solutions on manifolds of general topology, and as such can be further refined for different particular classes of supersymmetric solutions, for instance, static or stationary supersymmetric solutions, which we plan consider elsewhere. If in Theorem \ref{thm:susysolutions} we are not interested in the relation between the given supersymmetry parameter $\varepsilon\in \Gamma(S)$ and the associated Dirac current and null coframes, then we can reformulate it as follows, which is sometimes more convenient for applications. 

\begin{cor}
A three-manifold $M$ admits a supersymmetric NS-NS solution if and only if it admits a global coframe $(u,v,n)$ satisfying the following differential system    
	\begin{align}
		& \dd u = 2 \frf_b\,n\wedge u\, , \qquad  2\frf_b \dd v = (\mathfrak{K}\, u + u(\frc_b^v) v)\wedge n\, , \qquad 2\frf_b\dd n =  u\wedge (u(\frc_b^v) v + n(\frc_b^v) n) \label{eq:susysystemii} \\
		& \frac{n(\mathfrak{K})}{2\frf_b}   =  \frac{u(\frc_b^v)}{4\frf_b^2} \mathfrak{K} +  v(\frac{n(\frc_b^v)}{2\frf_b}) + \frac{n(\frc_b^v)^2}{4\frf^2_b} - v(\frc_b^v)\, , \qquad \dd\frf_b(n) = 2 \frf_b^2 \, , \qquad  0 = [\frc_b^v u + 2\frf_b n] \in H^1(M,\mathbb{R})\nonumber 
	\end{align}
	
\noindent
for a function $\mathfrak{K}\in C^{\infty}(M)$ and a curving $b$ on a bundle gerbe over $M$.
\end{cor}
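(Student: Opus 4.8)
The plan is to read this corollary as a spinor-free repackaging of Theorem~\ref{thm:susysolutions}, so that the proof reduces to two translation steps: reconstructing the full NS-NS data from a coframe, and extracting a global coframe from a solution. I would dispatch the backward direction first, as it is the cleaner of the two. Given a global coframe $(u,v,n)$ satisfying \eqref{eq:susysystemii} for some $\mathfrak{K}\in C^{\infty}(M)$ and some curving $b$, I would define the Lorentzian metric $g := u\odot v + n\otimes n$; by construction $(u,v,n)$ is then a null coframe for $g$ (the dual frame has Gramian of signature $(2,1)$, so the Hodge relations \eqref{eq:dualhodges} hold), and the curving $b$ fixes $\frf_b := \tfrac12\ast_g H_b$ through $H_b = -2\frf_b\nu_g$. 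The first line of \eqref{eq:susysystemii} is exactly the exterior system \eqref{eq:exteriorconjugateparallelismtorsion}, with the one-form $\kappa$ reconstructed as in the proof of Theorem~\ref{thm:susysolutions}, so $(M,g,u,\frf_b)$ is a skew-torsion Lorentzian three-manifold and, by Proposition~\ref{prop:torsionparallel} and Corollary~\ref{cor:conjugateparalleltorsion}, carries a skew-torsion parallel spinor $\varepsilon$, canonical up to sign. The cohomological condition $[\frc_b^v u + 2\frf_b n]=0$ then yields a global $\phi\in C^{\infty}(M)$ with $\dd\phi = \frc_b^v u + 2\frf_b n$, so $\cX$ may be taken trivial and $\frf_b = ce^{\phi}$ (cf. Lemma~\ref{lemma:fluxsolution}); the remaining equation $\dd\frf_b(n)=2\frf_b^2$ supplies the dilaton equation and the $\mathfrak{K}$-equation supplies the Einstein equation, precisely as in the converse half of Theorem~\ref{thm:susysolutions}. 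Hence $(g,b,\phi)$ is a supersymmetric solution.

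For the forward direction, a supersymmetric NS-NS solution provides $(g,b,\phi)\in\Sol_s(\cC,\cX)$ with a non-zero supersymmetry parameter $\varepsilon$; since $\varepsilon$ is $\nabla^{g,b}$-parallel it is nowhere vanishing, and by the signature-$(2,1)$ analysis of Section~\ref{sec:Lorentzianexample3d} its Dirac current $u\in\Omega^1(M)$ is a global, nowhere vanishing, isotropic one-form. The one point requiring care is to complete $u$ to a \emph{global} null coframe. I would do this explicitly using time-orientability: fixing a global unit timelike field $T$, one has $g(u^{\sharp_g},T)\neq 0$ everywhere, and setting $v^{\sharp_g} = g(u^{\sharp_g},T)^{-1}\,T + \tfrac12\, g(u^{\sharp_g},T)^{-2}\,u^{\sharp_g}$ produces a global conjugate isotropic field, whence $n = -\ast_g(u\wedge v)$ defines a global element of $\cP_u$. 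Theorem~\ref{thm:susysolutions}, in its ``for any, and hence for all, null coframes'' form, then guarantees that this global coframe satisfies \eqref{eq:susysystem} (equivalently \eqref{eq:susysystemii}) for the appropriate $\mathfrak{K}$ and the given curving $b$, which completes the equivalence.

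The main obstacle is the globalization step just sketched: Theorem~\ref{thm:susysolutions} delivers the defining system only relative to null coframes in $\cP_u$, and a priori such coframes exist only locally, so one must argue that a global section exists. This is where the standing orientation and time-orientation hypotheses on $(M,g)$ are used essentially; the explicit formula above resolves it, as does the sheaf-theoretic remark that the sheaf of local null coframes with fixed $u$ is a torsor over the soft sheaf $C^{\infty}(M)$, whose classifying $H^1$ vanishes. Beyond this the statement's independence of the chosen global completion is already recorded in the remark following Theorem~\ref{thm:susysolutions}, so no further work is needed and the equivalence is clean.
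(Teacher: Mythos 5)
Your proposal is correct and follows essentially the same route as the paper, which presents this corollary as an immediate repackaging of Theorem \ref{thm:susysolutions}: in one direction the metric is reconstructed as $g = u\odot v + n\otimes n$ and the system \eqref{eq:susysystemii} is identified with \eqref{eq:susysystem}, and in the other the theorem is applied to a null coframe associated to the Dirac current. Your explicit globalization of the coframe (via a global timelike field, or equivalently via softness of $C^\infty(M)$, which makes the torsor $\cP_u$ trivial once local sections exist) is a detail the paper leaves implicit, and your formula for $v^{\sharp_g}$ does produce a global conjugate isotropic field, so the argument is complete.
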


\begin{remark}
In a supergravity context, the supersymmetry parameter is many times irrelevant in itself, only the geometric and topological consequences of its existence being important. In this situation, the previous corollary captures the if and only if conditions for a supersymmetry solution to exist with no mention of the underlying Lorentzian metric or supersymmetry parameter. This seems to realize, at least to some extent, the motivation and ideology explained in \cite{Tomasiello:2011eb}, where the \emph{complete} Type-II theory, of which NS-NS supergravity is a subsector, is considered in ten dimensions.
\end{remark} 

\noindent
To end this section, we apply the previous theorem to completely characterize three-dimensional NS-NS supersymmetric solutions locally. By Proposition \ref{prop:localskewtorsion}, given a supersymmetric NS-NS solution $(g,b)$ on $M$ we have local coordinates $(x_u,x_v,z)$ in which te metric and the Dirac current $u\in \Omega^1(M)$ read:
\begin{eqnarray*}
	g = \cH\, \dd x_u\otimes \dd x_u + e^{2\int \frf\,\dd z + c(x_u)} \dd x_u\odot \dd x_v + \dd z \otimes \dd z\, , \quad u = e^{2\int \frf\,\dd z + c(x_u)} \dd x_u
\end{eqnarray*}

\noindent
for a function $\cH$ of $x_u$ and $z$ and a function $c(x_u)$ of $x_u$. In particular, we can choose a null coframe 
$(u,v,n)\in\cP_u$ of the form:
\begin{eqnarray*}
(u = e^{2\int \frf\,\dd z + c(x_u)} \dd x_u ,v = \frac{1}{2}\cH e^{-2\int \frf\,\dd z - c(x_u)} \dd x_u + \dd x_v, n =  \dd z ) \in \cP_u
\end{eqnarray*}

\noindent
We compute:
\begin{eqnarray*}
\dd u = 2\frf_b n\wedge u\, , \qquad \dd v = \frac{1}{2}\partial_z (\cH e^{-2\int \frf\,\dd z - c(x_u)})\, e^{-2\int \frf\,\dd z - c(x_u)} n\wedge u \, , \qquad \dd n = 0
\end{eqnarray*}

\noindent
and hence we conclude that:
\begin{equation*}
\mathfrak{H} = -\frf_b \partial_z (\cH e^{-2\int \frf\,\dd z - c(x_u)})\, e^{-2\int \frf\,\dd z - c(x_u)}\, , \qquad \partial_z \frf_b = 2 \frf_b^2\, , \qquad \partial_{x_v}\frc_b^v = 0\, , \qquad \partial_z\frc_b^v = 0   
\end{equation*}

\noindent
The first equation above determines explicitly $\mathfrak{H}$ in terms of $b$, $\cH$ and $c(x_u)$. Since we must have $\partial_{x_v}\frf_b = 0$ by the integrability conditions of \eqref{eq:susysystem}, the general solution to the second equation above is:
\begin{eqnarray}
\label{eq:localf}
\frf_b = -\frac{1}{2 z + l(x_u)}
\end{eqnarray}

\noindent
for a function $l(x_u)$ of the coordinate $x_u$. The remaining conditions $\partial_{x_v}\frc_b^v = 0$ and $\partial_z\frc_b^v = 0$ in the above equation are respectively equivalent to:
\begin{equation*}
\partial_{x_u}(e^{-2\int \frf\,\dd z - c(x_u)} \frac{\partial_{x_u}\frf_b}{\frf_b}) = 0\, , \qquad \partial_{z}(e^{-2\int \frf\,\dd z - c(x_u)} \frac{\partial_{x_u}\frf_b}{\frf_b}) = 0
\end{equation*}

\noindent
which in turn reduce to:
\begin{eqnarray*}
 \frac{\partial_{x_u}\frf_b}{\frf_b}  = - a\, e^{2\int \frf\,\dd z + c(x_u)}
\end{eqnarray*}
 
\noindent
for a real constant $a\in \mathbb{R}$. Substituting \eqref{eq:localf} into the previous expression, we obtain:
\begin{eqnarray*}
\partial_{x_u} l(x_u) =   a\, e^{c(x_u)}
\end{eqnarray*}

\noindent
The general solution to this equation is $l(x_u) = d + a\int e^{c(x_u)}$ for real constants $a , d \in  \mathbb{R}$. Hence, after relabeling the arbitrary functions of $x_u$ contained in the solution, we obtain:
\begin{eqnarray*}
\frf_b = -\frac{1}{2 z + a(x_u)}\, , \qquad \dd\phi = - \frac{2 \dd z + \partial_{x_u}a(x_u) \dd x_u }{2 z + a(x_u)}    
\end{eqnarray*}

\noindent
as well as:
\begin{eqnarray*}
u =   \frac{\partial_{x_u} a(x)}{2 z + a(x_u)}\dd x_u\, , \quad v = \frac{(a(x_u) + 2 z)\, \cH}{2\partial_{x_u} a(x_u)}   \dd x_u + \dd x_v\, , \quad n =  \dd z\, , \quad (\partial_{x_u} a(x_u))^{2}\mathfrak{K} =  \partial_{z} (\cH (2 z + a(x_u)))  
\end{eqnarray*}

\noindent
The only equation that remains to be addressed in Theorem \ref{thm:susysolutions} to guarantee that we have a local susypersymmetric solution to NS-NS supergravity is the differential equation for $\mathfrak{K}$, which reduces simply to $\partial_z \mathfrak{H} = 0$. Its general solution is:
\begin{eqnarray*}
\cH = \frac{l(x_u)}{2 z + a(x_u)} 
\end{eqnarray*}

\noindent
for an arbitrary function $l(x_u)$ of $x_u$. Hence, we obtain the following local characterization of supersymmeric NS-NS solutions.

\begin{cor}
Let $(g,b,\phi)\in \Sol_s(\cQ,\cX)$ be a supersymmetric solution. Then, there exist local coordinates on $(x_u,x_v,z)$ in which $(g,b,\phi)$ is given by:   
\begin{align*}
& g = \frac{l(x_u)}{2 z + a(x_u)}  \dd x_u\otimes \dd x_u +  \frac{\partial_{x_u} a(x_u)}{2 z + a(x_u)}  \dd x_u\odot \dd x_v + \dd z \otimes \dd z\\
& b = \frac{2\dd x_v\wedge \dd z + \dd x_u\wedge \dd x_v}{2 z + a(x_u)}\, , \qquad \phi = - \log(2 z + a(x_u))
\end{align*}
	
\noindent
for a pair of functions $a(x_u)$ and $l(x_u)$ depending only on $x_u$.
\end{cor}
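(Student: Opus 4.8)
The plan is to feed the normal form already available for skew-torsion Lorentzian three-manifolds into the supersymmetry system of Theorem~\ref{thm:susysolutions} and solve the resulting ordinary differential equations explicitly. Since a supersymmetric solution is in particular a skew-torsion Lorentzian three-manifold (Proposition~\ref{prop:susyconfiguration}), Proposition~\ref{prop:localskewtorsion} supplies adapted coordinates $(x_u,x_v,z)$ in which $g=\cH\,\dd x_u\otimes\dd x_u+e^{2\int\frf_b\,\dd z+c(x_u)}\,\dd x_u\odot\dd x_v+\dd z\otimes\dd z$ and $u=e^{2\int\frf_b\,\dd z+c(x_u)}\,\dd x_u$, together with a distinguished null coframe $(u,v,n)$ having $n=\dd z$. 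Everything then reduces to determining $\frf_b$, $\cH$ and the free function $c(x_u)$ from the scalar content of \eqref{eq:susysystem}.

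I would solve these one at a time. The equation $\dd\frf_b(n)=2\frf_b^2$ reads $\partial_z\frf_b=2\frf_b^2$, and compatibility with $\partial_{x_v}\frf_b=0$ (forced by the integrability conditions of the coframe) gives $\frf_b=-1/(2z+a(x_u))$ for a free function $a(x_u)$. The two conditions $\partial_{x_v}\frc_b^v=0$ and $\partial_z\frc_b^v=0$ then reduce to a first-order relation for $a(x_u)$ that integrates, after relabelling the free functions of $x_u$, to the stated form; simultaneously the exponential factor is fixed as $e^{2\int\frf_b\,\dd z+c(x_u)}=\partial_{x_u}a(x_u)/(2z+a(x_u))$, so that $u=(\partial_{x_u}a/(2z+a))\,\dd x_u$. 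Finally the equation governing $\mathfrak{K}$ reduces to $\partial_z\mathfrak{H}=0$, solved by $\cH=l(x_u)/(2z+a(x_u))$ for a free function $l(x_u)$; substituting $\cH$ and the exponential factor back into the metric of Proposition~\ref{prop:localskewtorsion} yields the displayed expression for $g$.

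It then remains to recover the dilaton and the curving. The Lee one-form $\varphi_\phi=\frc_b^v\,u+2\frf_b\,n$ is exact by the cohomological hypothesis of \eqref{eq:susysystem}, and a direct evaluation gives $\frc_b^v=-1$, whence $\dd\phi=\varphi_\phi=-(2\,\dd z+\partial_{x_u}a\,\dd x_u)/(2z+a)$ integrates to $\phi=-\log(2z+a(x_u))$ up to an additive constant; this is consistent with $\frf_b=c\,e^{\phi}$ from Lemma~\ref{lemma:fluxsolution}. For the curving I would compute the volume form $\nu_g$ of the normal-form metric, set $H_b=-2\frf_b\,\nu_g$, and exhibit a local primitive $b$ with $\dd b=H_b$; the Poincar\'e lemma guarantees existence locally, the stated two-form being one such choice after fixing the gauge, while integrality of $[\frc_b^v u+2\frf_b n]$ promotes the local data to a genuine bundle gerbe.

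The step I expect to be most delicate is the bookkeeping in the last paragraph: computing $\nu_g$ for the off-diagonal normal-form metric and verifying that the compact expression for $b$ has the prescribed curvature requires careful tracking of orientation and of the sign conventions implicit in $g=u\odot v+n\otimes n$. Everything upstream is a transcription of the explicit ODE solutions; the only substantive check is that these closed-form data satisfy \emph{every} equation of Theorem~\ref{thm:susysolutions} at once, which holds because each was solved in turn with the previously obtained solutions substituted in.
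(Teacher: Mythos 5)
Your proposal follows the paper's own derivation essentially step for step: insert the normal form of Proposition~\ref{prop:localskewtorsion} into the system of Theorem~\ref{thm:susysolutions}, solve $\partial_z\frf_b=2\frf_b^2$ for $\frf_b=-1/(2z+a(x_u))$, use the constancy conditions on $\frc_b^v$ to fix the exponential factor as $\partial_{x_u}a/(2z+a)$, and reduce the $\mathfrak{K}$ equation to $\partial_z\mathfrak{H}=0$ giving $\cH=l(x_u)/(2z+a)$. The only material you add is the explicit recovery of $\phi$ from exactness of $\varphi_\phi$ and of $b$ as a local primitive of $H_b=-2\frf_b\nu_g$, which the paper leaves implicit; this is correct and consistent with its argument.
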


\noindent
For the previous local expressions to be well defined we must have:
\begin{eqnarray*}
2 z + a(x_u) > 0\, , \qquad \partial_{x_u} a(x_u) > 0
\end{eqnarray*}

\noindent
so the local domain of the coordinates needs to be restricted accordingly. Modulo these constraints, we obtain the following corollary.

\begin{cor}
Supersymmetric solutions of three-dimensional NS-NS supergravity are locally parametrized by two real functions of one variable. 
\end{cor}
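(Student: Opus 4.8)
The plan is to read off the count directly from the explicit local normal form established in the immediately preceding corollary, so that the present statement becomes essentially a bookkeeping consequence of it. First I would invoke that corollary, which asserts that around any point every supersymmetric NS-NS solution $(g,b,\phi)$ can be brought, by a choice of adapted coordinates $(x_u,x_v,z)$, into the explicit form
\begin{align*}
& g = \frac{l(x_u)}{2 z + a(x_u)}\,\dd x_u\otimes \dd x_u +  \frac{\partial_{x_u} a(x_u)}{2 z + a(x_u)}\, \dd x_u\odot \dd x_v + \dd z \otimes \dd z\, ,\\
& b = \frac{2\,\dd x_v\wedge \dd z + \dd x_u\wedge \dd x_v}{2 z + a(x_u)}\, , \qquad \phi = - \log(2 z + a(x_u))\, ,
\end{align*}
where $a$ and $l$ are functions of the single coordinate $x_u$. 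Since the entire triple $(g,b,\phi)$ is written in terms of these two functions and nothing else, the forward direction is immediate: in these coordinates a local supersymmetric solution is completely determined by the pair $(a,l)$ of functions of one variable.

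For the converse I would check that this data is genuinely free, i.e. that any choice of $(a,l)$ satisfying the positivity constraints $2z+a>0$ and $\partial_{x_u}a>0$ actually yields a supersymmetric solution. This is exactly Theorem \ref{thm:susysolutions} read backwards: one builds the null coframe $(u,v,n)$ from the given $(a,l)$, verifies that it satisfies the exterior differential system \eqref{eq:susysystem} together with the condition $\dd\frf_b(n)=2\frf_b^2$ and the cohomological constraint, and then applies the already-established \emph{if} direction of Theorem \ref{thm:susysolutions} to conclude that $(g,b,\phi)$ is a supersymmetric NS-NS solution. All these verifications are the same routine computations already carried out in deriving the preceding corollary, so no new analytic input is required; the two functions $a(x_u)$ and $l(x_u)$ are mutually unconstrained and both survive after imposing every equation of Theorem \ref{thm:susysolutions}.

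The main point to be careful about is the precise meaning of \emph{parametrized}: the count refers to the free data in the adapted normal form, namely the two functions $a(x_u)$ and $l(x_u)$, and I would state this explicitly rather than claim a diffeomorphism-invariant moduli count. The residual freedom in the adapted coordinates is limited to the shift $z\mapsto z+\mathrm{const}$, which merely shifts $a$ by a constant, and reparametrizations of $x_u$, which act on $(a,l)$; I would note that the statement is to be understood as the local count in this gauge. I expect the only genuinely delicate bookkeeping to be tracking this residual coordinate freedom and confirming that neither function can be eliminated within the stated normal form, the rest being a direct reading-off of the explicit formula.
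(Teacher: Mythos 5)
Your proposal is correct and follows essentially the same route as the paper: the corollary is read off from the explicit local normal form in the preceding corollary, which the paper obtains by solving the differential system of Theorem \ref{thm:susysolutions} in the adapted coordinates of Proposition \ref{prop:localskewtorsion}, with the general solution parametrized freely by $a(x_u)$ and $l(x_u)$ subject only to the stated positivity constraints. Your additional remarks on the converse direction and on the residual coordinate freedom are sensible refinements of what the paper leaves implicit, not a different argument.
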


%%%%%%%%%%%%%%%%%%%%%%%%%%%%%%%%%%%%%%%%%%%%%%%%%%%%%%%%%%%%%%%%%%%%%%%%%%%%%%
%%%%%%%%%%%%%%%%%%%%%%%%%%%%%%%%%%%%%%%%%%%%%%%%%%%%%%%%%%%%%%%%%%%%%%%%%%%%%%
%%%%%%%%%%%%%%%%%%%%%%%%%%%%%%%%%%%%%%%%%%%%%%%%%%%%%%%%%%%%%%%%%%%%%%%%%%%%%%
%%%%%%%%%%%%%%%%%%%%%%%%%%%%%%%%%%%%%%%%%%%%%%%%%%%%%%%%%%%%%%%%%%%%%%%%%%%%%%

\appendix

%%%%%%%%%%%%%%%%%%%%%%%%%%%%%%%%%%%%%%%%%%%%%%%%%%%%%%%%%%%%%%%%%%%%%%%%%%%%%%
%%%%%%%%%%%%%%%%%%%%%%%%%%%%%%%%%%%%%%%%%%%%%%%%%%%%%%%%%%%%%%%%%%%%%%%%%%%%%%
%%%%%%%%%%%%%%%%%%%%%%%%%%%%%%%%%%%%%%%%%%%%%%%%%%%%%%%%%%%%%%%%%%%%%%%%%%%%%%
%%%%%%%%%%%%%%%%%%%%%%%%%%%%%%%%%%%%%%%%%%%%%%%%%%%%%%%%%%%%%%%%%%%%%%%%%%%%%%

%\newpage
%\renewcommand{\leftmark}{\MakeUppercase{Bibliography}}
\phantomsection
\bibliographystyle{JHEP}
%\bibliographystyle{plain}
%\newpage  

\end{document}